\renewcommand*{\eqref}[1]{%
  \hyperref[{#1}]{\textup{\tagform@{\ref*{#1}}}}%
}
\def\RR{\mathbb{R}}
\def\CC{\mathbb{C}}
\def\g{\mathfrak{g}}
\def\hh{\mathfrak{h}}
\def\t{\texttt{T}}
\DeclareMathOperator{\Aut}{Aut}
\DeclareMathOperator{\Ric}{Ric}
\DeclareMathOperator{\ad}{ad}
\DeclareMathOperator{\aff}{aff}
\DeclareMathOperator{\sgn}{sgn}
\DeclareMathOperator{\trace}{trace}
\def\h3{\mathfrak{h}_3}
\def\gg{\rm{T}^*\mathfrak{h}_3}
\def\ggp{{\gg}'}
\def\ggpo{{\ggp}^\perp}
\newtheorem{theorem}{{Theorem}}[section]
\newtheorem{lemma}{{Lemma}}[section]
\newtheorem{proposition}{{Proposition}}[section]
\newtheorem{corollary}{{Corollary}}[section]
\newtheorem{remark}{{Remark}}[section]
\newenvironment{problem}[1]
  {\innercustomthm}
  {\endinnercustomthm}
\theoremstyle{definition}
\declaretheorem[name={\bfseries Example}, numbered=yes ,qed={\lower-0.3ex\hbox{$\diamond$}}]{example}
\title{On the moduli spaces of left invariant metrics on cotangent bundle of Heisenberg group}
\author{T. \v Sukilovi\' c \thanks{Corresponding author: tijana@matf.bg.ac.rs\\ \indent The research of the first two authors was supported by the Serbian Ministry of Education, Science and
Technological Development through Faculty of Mathematics, University of Belgrade.}}
\author{S. Vukmirovi\' c}
\author{N. Bokan }
\affil{\small{University of Belgrade - Faculty of Mathematics, Belgrade, Serbia}}
\date{\today}
\begin{document}
\maketitle

\begin{abstract}
  The main focus of the paper is the investigation of moduli space of left invariant pseudo-Riemannian metrics on
  the cotangent bundle of Heisenberg group. Consideration of orbits of the automorphism group naturally acting
  on the space of the left invariant metrics allows us to use the algebraic approach.
  However, the geometrical tools, such as classification of hyperbolic plane conics, will often be required.

  For metrics that we obtain in the classification, we  investigate geometrical properties: curvature, Ricci tensor, sectional curvature, holonomy and parallel vector fields. The classification of algebraic Ricci solitons is also presented, as well as  classification of pseudo-K\"{a}hler and pp-wave metrics. We get the description of parallel symmetric tensors for each metric and show that they are derived from parallel vector fields. Finally, we investigate the totally geodesic subalgebras by showing that for any subalgebra of the observed algebra there exists a metric that makes it totally geodesic.

  \vspace{1em}
  \textbf{Key words:} cotangent bundle of Heisenberg group, moduli space, pseudo-Riemannian metrics, Ricci solitons, pp-waves, pseudo-K\"{a}hler metrics, parallel symmetric tensors, totally geodesic subalgebras.

  \textbf{MSC 2020:} 22E25, 22E60, 53B30, 53B35
\end{abstract}

\tableofcontents

\section{Introduction}

Left invariant (pseudo-)Riemannian metrics on Lie groups provide a lot of interesting examples
of distinguished metrics, in particular Einstein and Ricci soliton metrics. Hence, it is a natural question whether a given Lie group $G$ admits
some special left invariant (pseudo-)Riemannian metrics or not. The main difficulty in metric classification lies in the fact that the space of all left invariant metrics on a specific Lie group can be quite large.

That space of metrics is called the \emph{moduli space} and is defined as the orbit space of the action of $\RR^\times\Aut(\g)$ on the space $\mathfrak{M}(G)$ of left invariant metrics on $G$. Here, $\Aut(\g)$ denotes the automorphism group of the corresponding Lie algebra and $\RR^\times$ is the scalar group.
There are two, in some way dual, approaches to the classification problem, both based on the moduli space of left invariant (pseudo-)Riemannian metrics on the Lie group. The first one is to fix a Lie algebra basis in a way to make the commutator relations as simple as possible and then to adapt the inner product to it by action of the automorphism group. This approach was first introduced by Milnor~\cite{Milnor} who used it to classify all left invariant Riemannian metrics on three-dimensional unimodular Lie groups. The second one is to start from the basis that makes the inner product take the most basic form, while allowing the Lie brackets to be arbitrary, but satisfying the Jacobi identity and in this way defining a hypersurface of feasible Lie brackets. Note that in both cases the orbits of $\Aut(\g)$ induce the isometry classes, while $\RR^\times$ gives rise to the scaling. For the more detailed outline of each approach, we refer to~\cite{Tamaru1, Tamaru2}.

Interestingly, while the Riemannian case is well studied and understood, the pseudo-Riemannian case appears more challenging and still has a variety of open questions. The Milnor's classification of 3-dimensional Lie groups with left invariant positive definite metric~\cite{Milnor} has become a classic reference,
while  the corresponding Lorentz classification~\cite{CorderoParker} followed twenty years later. In dimension four, only partial results are known.
The classification of 4-dimensional Riemannian Lie groups is due to B\'{e}rard-B\'{e}rgery~\cite{Bergery}. Jensen~\cite{Jensen} has studied Einstein homogeneous spaces with Riemannian (positive definite) metric, while Karki and Thompson~\cite{Karki} studied Einstein manifolds that arise from right invariant Riemannian metrics on a 4-dimensional Lie group. Calvaruso and Zaeim~\cite{Calvaruso} have classified Lorentz left invariant metrics on the Lie groups that are Einstein or Ricci-parallel, using the previously mentioned second approach. The classification in the case of nilpotent Lie groups in small dimensions was extensively studied in both the Riemannian~\cite{Lauret1} and the pseudo-Riemannian setting~\cite{Bokan1,Sukilovic1,Homolya}.
Recent results include the classification of pseudo-Riemannian metrics for 4-dimensional solvable Lie groups~\cite{Sukilovic2} and
  in positive definite case, the moduli space for 6-dimensional nilpotent Lie groups admitting complex structure with the first Betti number equal to 4 has been determined~\cite{Vitone}.
In arbitrary dimension, one must mention the Lorentz classification of left invariant metrics on Heisenberg group $H_{2n+1}$~\cite{Vukmirovic} and classification of Ricci solitons on nilmanifolds~\cite{Lauret2}.

The cotangent bundles play significant role in standard description of physical systems, both for particles and for fields (see e.g.~\cite{Alekseevsky}).
In particular, they appear as the configuration space of some mechanical systems and are frequently endowed with rich algebraic and geometric structures (see e.g.~\cite{Feix,Diatta,Drinfels}). In this paper we are interested in the cotangent bundle of the Heisenberg group $H_3$, mainly because this group is a constant topic of research due to its properties and various areas of application. For example, Herman Weyl was led to an explicit realization of the Heisenberg group while trying to answer a question of physical equivalence of the Schr\"odinger's and Heisenberg's picture.

The paper is organized as follows.

First, in Section~\ref{sec1}, we review some basic facts about the algebra $\gg$ and its automorphism group.

In Section~\ref{sec2} we classify all non isometric left invariant pseudo-Riemannian metrics on $\gg$. For the classification we use the second  approach described above: we fix the commutators and act with automorphisms of the algebra $\gg$ to find representatives of the metrics.  The restriction of the metric on the derived subalgebra $\ggp$ plays very important role in the analysis. Every induced signature of $\ggp$ is discussed in separate subsection and in each case we have to apply different geometrical and algebraic methods for the classification. For example, in the case when induced metric is Lorentzian we must include some classical results from the projective geometry, while the degenerate case requires more subtle analysis that heavily depends on the signature of the degenerate subspace and often includes the use of euclidean and hyperbolic rotations. The results are summarized in Theorem~\ref{thm:main}.

Section~\ref{sec3} is devoted to investigation of the geometrical properties of the obtained metrics.
First, we investigate the curvature properties (Proposition~\ref{prop:geom}) and  scalar curvature (Propositon~\ref{prop:scalar}).
We show that all parallel vector fields are null in Proposition~\ref{pr:paralel}. The holonomy of metrics is quite diverse and described in Proposition~\ref{prop:hol}.
However, we leave some deeper understanding of the holonomy for further research.

In Subsection~\ref{ssec:solitons} we classify metrics which are algebraic Ricci solitons. In the Riemannian case (see~\cite{Lauret2}) such metric would be unique up to homotety, but since we work in pseudo-Riemannian settings, we have several non isometric metrics that are shrinking, expanding or steady solitons.

In Subsection~\ref{ssec:kahler} we consider the invariant complex structure obtained by Salamon~\cite{Salamon} in his classification of complex structures on nilpotent Lie algebras, split to subsets according the value of its first and second Betti numbers of $M={\Gamma}\backslash G$, where $\Gamma$ is a discrete subgroup of $G$. It is known that the corresponding symplectic structure is 5-dimensional and that the non-flat, Ricci-flat, pseudo-K\"{a}hler metrics are admissible (see~\cite{Cordero}). In this section  we classify  pseudo-K\"{a}hler metrics and show that they all belong to the same family of metrics (Proposition~\ref{prop:pseudoK}).

It is known that every two left invariant metrics with same geodesics are affinely equivalent (see~\cite{Bokan2})  and that the difference of two such metric is invariant parallel symmetric tensor. In Subsection~\ref{ssec:afine} in Proposition~\ref{prop:geod} we show that all such tensors can be obtained using parallel vectors and therefore, from~\cite{Kruckovic} it follows that  metrics admitting such tensors are Riemannian extensions of Euclidean space.

There are lots of known facts about the totally geodesic subalgebras of a nilpotent Lie algebra (see e.g.~\cite{Nikolayevsky}). Hence, the Subsection~\ref{ssec:geodesicsub} is devoted to their investigation. Interestingly, for every subalgebra $\hh$ of $\gg$ there exists a metric that makes it totally geodesic, as shown in Proposition~\ref{prop:totgeod}.

\section{Preliminaries}\label{sec1}

Let us briefly recall the construction of cotangent  Lie algebra.

\emph{Cotangent algebra} $T^*\g$ of Lie algebra $\g$ is semidirect product of  $\g$ and its cotangent space $\g ^*$
\begin{align*}
	T^* \g := \g \ltimes_{\ad^*} \g^*,
\end{align*}
i.e. the commutators are defined by
\begin{align}
	\label{eq:cotangent}
	[(x,\phi), (y,\psi)]:= ([x,y], ad^*(x)(\psi) - ad^*(y)(\phi)), \quad x,y\in \g,\enskip \phi, \psi \in \g^*.
\end{align}
By $ad^* : \g \to  \mathrm{gl} (\g^*)$ we denote the coadjoint representation
\begin{align*}
(ad^*(x) (\phi))(y):= - \phi (ad(x)(y)) = - \phi([x,y]).
\end{align*}

The Heisenberg Lie algebra $\h3$ is 3-dimensional nilpotent Lie algebra defined by non-zero commutators
\begin{align}\label{eq:h3}
	[x_1, x_2]=x_3.
\end{align}

The cotangent algebra $\gg$  of $\h3$ is 6-dimensional irreducible, 2-step nilpotent algebra with maximal abelian ideal of rank 4 and 3-dimensional center (see~\cite[Type 3]{Morozov} or~\cite[Type III3]{Umlauf}).

For the convenience, we will fix the basis $e = (e_1, e_2, e_3, e_4, e_5, e_6)$ such that the Lie algebra $\gg$ is defined by non-zero commutators:
\begin{align}
	\label{eq:komutatori}
	[e_1, e_2] = e_6, \quad [e_1, e_3] = -e_5, \quad [e_2, e_3] = e_4.
\end{align}
Note that this relations can be written in the form
\begin{align}
	\label{eq:komutatorieps}
	[e_i,e_j] = \varepsilon _{ijk} e_{3+k}
\end{align}
where $\varepsilon _{ijk}$ is totally antisymmetric  Levi-Civita symbol and $i,j,k \in \{1,2,3\}.$
The commutator subalgebra $\ggp = [\gg, \gg]$ and the central subalgebra $\mathcal{Z}(\gg)$ coincide
\begin{align*}
	\ggp  = \RR\langle e_4, e_5, e_6 \rangle = \mathcal{Z}(\gg).
\end{align*}
\begin{lemma}
	\label{le:aut}
	The group of automorphisms of Lie algebra $\gg$ in basis $e$ with commutators~\eqref{eq:komutatori} is given in block-matrix form
	\begin{align}\label{eq:auto}
	\Aut (\gg) = \left\{	\begin{pmatrix}
	A & 0\\ B & A^*\end{pmatrix}\, | \, \det A \neq 0
	\right\}
\end{align}
	where $A^* := (\det A) A^{-T}$ and $A,B$ are $3\times 3$ matrices, or equivalently as
		\begin{align}		\label{eq:auto1}
		\Aut (\gg) = \left\{
		\begin{pmatrix}
			\pm (\sqrt{\det C})\, C^{-T} & 0\\ 	B & C \end{pmatrix}\, | \, \det C > 0
		\right\}
	\end{align}
	\end{lemma}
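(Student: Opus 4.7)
The plan is to combine two observations. First, the derived subalgebra $\ggp=\mathcal{Z}(\gg)$ is a characteristic ideal, hence preserved by every $\varphi\in\Aut(\gg)$; with respect to the splitting $\gg=\RR\langle e_1,e_2,e_3\rangle\oplus\ggp$ this forces the block-lower-triangular shape $\varphi=\begin{pmatrix}A&0\\ B&D\end{pmatrix}$ with $A,D\in GL_3(\RR)$ and $B$ an arbitrary $3\times 3$ matrix, no constraint on $B$ arising because $[\gg,\ggp]=0$. Second, the bracket~\eqref{eq:komutatorieps} is essentially the cross product on $\RR^3$, so the compatibility of $\varphi$ with the bracket should reduce to the classical identity $(Av)\times(Aw)=(\det A)\,A^{-T}(v\times w)$.

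Concretely, I would compute $\varphi([e_i,e_j])$ and $[\varphi(e_i),\varphi(e_j)]$ for $i,j\in\{1,2,3\}$ using~\eqref{eq:komutatorieps}. The coefficient of $e_{3+m}$ on the two sides equals, respectively, $\sum_n\varepsilon_{ijn}d_{mn}$ and $\sum_{k,l}\varepsilon_{klm}a_{ki}a_{lj}$; but the latter is precisely the $m$-th component of $(Ae_i)\times(Ae_j)$, which by the cross product identity becomes $(\det A)\sum_n(A^{-T})_{mn}\varepsilon_{ijn}$. Since the three vectors $(\varepsilon_{ij1},\varepsilon_{ij2},\varepsilon_{ij3})$ for $(i,j)\in\{(1,2),(2,3),(3,1)\}$ span $\RR^3$, matching coefficients forces the matrix identity $D=(\det A)\,A^{-T}=A^*$. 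Conversely, for any invertible $A$ and any $B$ the resulting block matrix preserves all brackets (the remaining commutators $[e_i,e_{3+j}]$ and $[e_{3+i},e_{3+j}]$ vanish automatically), so~\eqref{eq:auto} follows.

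To obtain~\eqref{eq:auto1}, I would reparametrize by $C:=A^*$. Taking determinants in $C=(\det A)\,A^{-T}$ yields $\det C=(\det A)^2>0$, so $\det A=\pm\sqrt{\det C}$ and hence $A=(\det A)\,C^{-T}=\pm\sqrt{\det C}\,C^{-T}$. Every $C$ with $\det C>0$ is realized for either choice of sign, giving the alternative presentation. The only real subtlety is spotting the cross product structure hidden in~\eqref{eq:komutatorieps}; once that is noticed, both forms of $\Aut(\gg)$ fall out essentially without further computation.
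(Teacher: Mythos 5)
Your proposal is correct and follows essentially the same route as the paper: the zero block comes from $\varphi$ preserving the characteristic ideal $\ggp=\mathcal{Z}(\gg)$, the relation $D=(\det A)A^{-T}$ comes from comparing $\varphi([e_i,e_j])$ with $[\varphi(e_i),\varphi(e_j)]$ via~\eqref{eq:komutatorieps}, and the second presentation follows by taking determinants in $C=(\det A)A^{-T}$. The only cosmetic difference is that you invoke the cross-product identity $(Av)\times(Aw)=(\det A)A^{-T}(v\times w)$ by name, while the paper carries out the equivalent Levi-Civita computation and identifies $c_{pk}$ with the cofactor of $a_{pk}$.
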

\begin{proof}
By the definition, automorphism  $F: \gg \to \gg$ is linear bijective map satisfying
\begin{align*}
	F([u,v]) = [F(u), F(v)],\enskip u,v\in \gg .
\end{align*}
Automorphism  $F$ maps vectors $e_1, e_2, e_3$ to arbitrary vectors
 \begin{align} 	\label{eq:Fej}
	F(e_j) = \sum_{i=1}^3 a_{ij}e_i  + \sum_{i=1}^3 b_{ij}e_{3+i} =  a_{ij}e_i +  b_{ij}e_{3+i}.
\end{align}
where $3\times 3$ matrix $B = (b_{ij})$ is arbitrary and $3\times 3$ matrix  $A = (a_{ij})$ must be regular.
In the last relation we dropped the summation sign assuming summation over repeated indices, as we will do in the sequel.
The automorphism $F$  must preserve the commutator subalgebra. This can be written as
\begin{align}\label{eq:cij}
	F(e_{3+j})  = c_{ij}e_{3+i},\, j=1,2,3
\end{align}	
where $C = (c_{ij})$ is some $3\times 3$ matrix. This explains the zero block in the matrix~\eqref{eq:auto}.
Now we find relation between matrices $A$ and $C.$

Using~\eqref{eq:komutatorieps} and~\eqref{eq:cij} we get
\begin{align}\label{eq:kom1}
F([e_i, e_j]) &= \varepsilon_{ijk} c_{pk} e_{3+p},\\
	\nonumber
	[F(e_i), F(e_j)] & =  [ a_{ki}e_k +  b_{ki}e_{3+k}, a_{mj}e_m +  b_{mj}e_{3+m} ] =
	[a_{ki}e_k, a_{mj} e_m]\\
	\label{eq:kom2} & = a_{ki} a_{mj} \varepsilon_{kmp}e_{3+p}.
\end{align}
Comparing relations~\eqref{eq:kom1} and~\eqref{eq:kom2} we get
\begin{flalign*}
	&& \varepsilon_{ijk} c_{pk}& =  \varepsilon_{kmp}	a_{ki} a_{mj},&\\
\text{or equivalently,}
	&& c_{pk} &= \varepsilon_{kij} \varepsilon_{kmp}  a_{ki} a_{mj} = A_{pk}&
\end{flalign*}
where $A_{pk}$ is cofactor of element $a_{pk}$ of matrix $A$. Therefore, $A^* = (\det A) (A^{-1})^T = C$ as claimed.

To obtain the second representation take determinant of the relation $C = A^* = (\det A) A^{-T}$ to obtain
$\det C = (\det A)^2 > 0$.
\end{proof}

\section{Classification of metrics}\label{sec2}

In this section we  classify non-isometric left invariant metrics of any signature on $\gg$.

If  $\g$ is a Lie algebra and $\langle \cdot, \cdot \rangle$ inner product on $\g$ the pair $(\g, \langle \cdot, \cdot \rangle)$ is called a \emph{metric Lie algebra}.
The structure of metric Lie algebra uniquely defines left invariant pseudo-Riemannian metric on the  corresponding simple connected Lie group $G$ and vice versa.

Metric algebras are said to be \emph{isometric} if there exists an isomorphism of Euclidean spaces preserving the curvature tensor and its covariant derivatives. This translates to the condition that metric algebras are isometric if and only if they are isometric as pseudo-Riemannian spaces
(see~\cite[Proposition 2.2]{Aleksijevski}). Although two isomorphic metric algebras are also isometric, the converse is not true.
In general, two metric algebras may be isometric even if the corresponding Lie algebras are non-isomorphic. The test to determine whether any two given solvable metric algebras (i.e. solvmanifolds) are isometric was developed by Gordon and Wilson in~\cite{Gordon}. However, by the results of Alekseevski\u{\i}~\cite[Proposition 2.3]{Aleksijevski}, in the completely solvable case, isometric means isomorphic.

Since Lie algebra $\gg$ is nilpotent and therefore completely solvable, non-isometric metrics on $\gg$ are the non-isomorphic ones.

The isomorphic classes of different left invariant metrics on $\gg$  can be seen as orbits of the automorphism group $\Aut (\gg)$ naturally acting on a space of left invariant metrics. This allows us to use the algebraic approach, although often more geometrical tools are required.

In basis $e$ of $\gg$  metric $\langle \cdot, \cdot \rangle$ is represented by a symmetric  $6\times 6$ matrix $S_e = (\langle e_i, e_j \rangle )$, that we refer as \emph{metric matrix.}
The problem of classification of metrics on $\gg$ is reduced to finding conjugacy classes of symmetric matrices under the action of group $\Aut(\gg)$:
\begin{align}\label{eq:action}
S_f = F^T S_e F, \enskip F \in \Aut(\gg).
\end{align}

In simple terms we want to find  new basis $f = (f_1, f_2, f_3, f_4, f_5, f_6)$ of $\gg$ with brackets of form~\eqref{eq:komutatori} such that the metric matrix $S_f$  in that basis is as simple as possible.
Since commutator algebra $\ggp  = \RR\langle e_4, e_5, e_6 \rangle$ is invariant under $\Aut(\gg )$ we cannot change its metrical character, i.e. its signature.

 Therefore, given symmetric metric matrix $S_e$ in basis $e$, we find its canonical form depending on restriction of  metric $\langle \cdot, \cdot \rangle $ on $\ggp.$

Let $S'_e$ be the symmetric $3\times 3$ matrix representing the restriction. The restriction of the action~\eqref{eq:action} on $S'$ by automorphism $F\in \Aut(\gg )$ of the form~\eqref{eq:auto1} is 	$ C^T\, S'_e C.$
Since $C$ is an arbitrary matrix of positive determinant this action brings $S'_e$ into canonical form given by matrix $diag (\mu_1, \mu_2, \mu_3)$, $\mu_i \in \{1, -1, 0\}.$
To establish the notation let
\begin{align}
	\nonumber
	E_{30} &=
	\begin{pmatrix}
		1 & 0 & 0 \\
		0 & 1 & 0 \\
		0 & 0 & 1
	\end{pmatrix}=I, &
	E_{21} &=
	\begin{pmatrix}
	1 & 0 & 0 \\
	0 & 1 & 0 \\
	0 & 0 & -1
\end{pmatrix}, &
E_{20} &=
	\begin{pmatrix}
	0 & 0 & 0 \\
	0 & 1 & 0 \\
	0 & 0 & 1
\end{pmatrix},\\
	\label{eq:epq}
	E_{11} &=
	\begin{pmatrix}
		0 & 0 & 0 \\
		0 & 1 & 0 \\
		0 & 0 & -1
	\end{pmatrix}, &
	E_{10}&=
	\begin{pmatrix}
		0 & 0 & 0 \\
		0 & 0 & 0 \\
		0 & 0 & 1	
	\end{pmatrix}, &
	E_{00} &=
	\begin{pmatrix}
		0 & 0 & 0 \\
		0 & 0 & 0 \\
		0 & 0 & 0
	\end{pmatrix},
\\
	\nonumber
	E_{03} &= -E_{30}, \qquad E_{12} = -E_{21},& E_{02} &= -E_{20},\qquad  E_{01} = -E_{10}.
\end{align}
The indexes of $E_{pq}$ denote the signature $(p,q),$ i.e. the number of, respectively, positive and negative vectors in the canonical form of $S'_e.$

Therefore, by choosing matrix $C$ in automorphism $F$ such that the restriction of metric on $\ggp$ is in the canonical form $E_{pq}$, the matrix of metric $\langle \cdot , \cdot \rangle$ in new basis becomes
\begin{align}
	\label{eq:spq}
S_{pq} = F^T\,S_e\,F =  	
\begin{pmatrix}
	 S &  M \\
	M^T & E_{pq}
\end{pmatrix},
\end{align}
where  $M = (m_{ij})$  is arbitrary and  $S^T = S =(s_{ij})$ is  symmetric $3\times 3$ matrix.

To simplify $S_{pq}$ further, we wish to choose an automorphism from the subgroup that preserves $E_{pq}$ part of matrix $S_{pq}$
\begin{align}
	\label{eq:autpq}
	\Aut (E_{pq}) = \{F \in \Aut (\gg) \, | \, C^TE_{pq} C = E_{pq} \}.
\end{align}
Groups $\Aut (E_{pq})$ and $\Aut (E_{qp})$ are isomorphic. In other cases these groups are  fundamentally different and therefore we have to discuss each case of $S_{pq}$ given by~\eqref{eq:spq} separately.

\subsection{$\ggp$ is definite  (case $S_{30}$ and $S_{03}$)}

In this case
\begin{align}
	\label{eq:aut30}
	\Aut (E_{30}) =  \left\{
	\begin{pmatrix}
		\pm A & 0\\
		B & A
		\end{pmatrix}
	\enskip | \enskip A^T A = I, \det A >0 \right\}
\end{align}
i.e.  $A\in SO(3)$ is orthogonal and $B$ arbitrary $3\times 3$ matrix.

Suppose that in basis $e$ the metric $\langle \cdot, \cdot  \rangle $ is represented by matrix $S_{30}$ or $S_{03}$  given by~\eqref{eq:spq}.
Find a new basis $f$ corresponding to automorphisms $F\in \Aut (E_{30}) $ of form~\eqref{eq:Fej} for $a_{ij} = \delta _{ij}$, i.e. matrix is identity matrix $A = I.$ From the form of $F$ given by~\eqref{eq:aut30}, we also have $F(e_{3+i}) = e_{3+i}.$ Then
\begin{align}
	\label{eq:s30ort}
	\langle F(e_j), F(e_{3+k})\rangle  = \langle e_j + b_{ij}e_{3+i}, e_{3+k}\rangle  = \langle e_j, e_{3+k}\rangle  + b_{ij}\langle e_{3+i}, e_{3+k}\rangle  = m_{jk} + b_{ij}\delta_{ik}.
\end{align}
Therefore, for $b_{jk} = - m_{kj}$ i.e. for $B = -M^T$ we obtain
\begin{align*}
\langle F(e_j), F(e_{3+k})\rangle = 0, \enskip j,k \in \{1,2,3\}.
\end{align*}
Therefore, $F$ brings matrix $S_{30}$ to the form
\begin{align*}
	\begin{pmatrix}
		S &  0 \\
		0 & E_{30}
	\end{pmatrix}\quad \mbox{or} \quad
\begin{pmatrix}
	S &  0 \\
	0 & E_{03}
\end{pmatrix}
\end{align*}
where $S = S^T$ has changed, but we denote it by the same letter to simplify notation.
Since, symmetric matrix $S$ can be diagonalized by orthogonal matrix $A,$ by using automorphism $F$ of the form~\eqref{eq:aut30} we finally get canonical form for definite $\ggp$
\begin{align}
	\label{eq:metrike1}
S_{30}=
	\begin{pmatrix}
		\Lambda &  0 \\
		0 & E_{30}
	\end{pmatrix}\quad \mbox{or} \quad
	S_{03}=\begin{pmatrix}
		\Lambda  &  0 \\
		0 & E_{03}
	\end{pmatrix},
\end{align}
where $\Lambda = diag (\lambda _1, \lambda _2, \lambda _3)$ and $ \lambda _1\geq \lambda _2\geq  \lambda _3$ are  different from zero and of arbitrary sign.

\subsection{$\ggp$ is Lorentzian  (case $S_{21}$ and $S_{12}$)}
The admissible automorphisms are
\begin{align}
	\label{eq:aut21}
	\Aut (E_{21})= 	\Aut (E_{12}) =  \left\{
\begin{pmatrix}
	\pm A & 0\\
	B & A
\end{pmatrix}
\enskip | \enskip A^T E_{21}A = E_{21}, \det A >0 \right\}
\end{align}
i.e.  $A\in SO(2,1)$ and $\pm A \in  O(2,1)$ and $B$ arbitrary $3\times 3$ matrix.

Suppose that in basis $e$ the metric $\langle \cdot , \cdot \rangle$ is represented by matrix $S_{21}$ or $S_{12}$  given by~\eqref{eq:spq}.

 By similar calculations as in~\eqref{eq:s30ort}, one can choose matrices $A = I$ and $B = -E_{21} M^T$ of automorphism $F\in \Aut (E_{21})$ such that in new basis $\langle \cdot, \cdot \rangle$ has the form
\begin{align}
	\label{eq:s21tmp}
	\begin{pmatrix}
		S &  0 \\
		0 & E_{21}
	\end{pmatrix} \quad \mbox{or} \quad
	\begin{pmatrix}
		S &  0 \\
		0 & E_{12}
	\end{pmatrix}.
\end{align}
Now, $3\times 3$ symmetric matrix $S$ can be of definite or Lorenzian signature. In order to preserve the form~\eqref{eq:s21tmp}, we can act by automorphism $F\in \Aut (E_{21})$ having $B=0$. This reduces  to finding equivalence clases  of action of group $SO(2,1)$ on Riemannian and on Lorentzian symmetric matrix~$S.$

\subsubsection{${\gg}'$ is Lorentzian, ${\ggp}^\perp$ is Riemannian}

The case when ${\ggp}^\perp$ is Riemannian is much simpler of the two cases.
\begin{lemma}\label{le:possop}
	Let $S$ be a symmetric matrix with positive eigenvalues. Then there exists matrix $A\in SO(2,1)$ such that $A^T S A$ is diagonal.
\end{lemma}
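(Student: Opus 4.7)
The plan is to simultaneously diagonalize $S$ and $E_{21}$ by analyzing the auxiliary operator $M := E_{21} S$ (noting $E_{21}^{-1} = E_{21}$). Since $E_{21} M = S$ is symmetric, $M$ is self-adjoint with respect to the indefinite bilinear form $\langle \cdot,\cdot\rangle_{E_{21}}$; equivalently, writing the Cholesky decomposition $S = P^T P$ (available because $S$ is positive definite), the conjugate $P M P^{-1} = P E_{21} P^T$ is real symmetric. Hence $M$ has only real eigenvalues and is diagonalizable. Since $P E_{21} P^T$ is congruent to $E_{21}$, by Sylvester's law its inertia is $(2,1)$, so the spectrum of $M$ consists of two positive and one negative eigenvalue counted with multiplicity.

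First I would handle the generic case of three distinct eigenvalues $\lambda_1, \lambda_2 > 0 > \lambda_3$ with eigenvectors $v_1, v_2, v_3$. The $S$-self-adjointness of $M$ (checked directly from $(E_{21} S)^T S = S E_{21} S$) forces $v_i^T S v_j = 0$ for $i \neq j$, and then the relation $S v_j = \lambda_j E_{21} v_j$ transfers this to give $v_i^T E_{21} v_j = 0$ as well. Since $v_i^T E_{21} v_i = \lambda_i^{-1} v_i^T S v_i$ has the sign of $\lambda_i$, I rescale the eigenvectors so that $v_1^T E_{21} v_1 = v_2^T E_{21} v_2 = 1$ and $v_3^T E_{21} v_3 = -1$. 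Then $A := [v_1 \mid v_2 \mid v_3]$ satisfies $A^T E_{21} A = E_{21}$, so $A \in O(2,1)$, and $A^T S A = \mathrm{diag}(\lambda_1, \lambda_2, -\lambda_3)$ is diagonal with positive entries. If $\det A = -1$, flipping the sign of $v_3$ places $A$ in $SO(2,1)$ without disturbing diagonality.

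The degenerate case of repeated eigenvalues is handled by noting that on any eigenspace $V_\lambda$ of $M$ the identity $S = \lambda E_{21}$ holds, so the two forms are proportional there. Since $S|_{V_\lambda}$ is positive definite, picking an $S$-orthogonal basis of $V_\lambda$ (rescaled appropriately) automatically yields $E_{21}$-orthogonality, and the previous construction goes through block by block.

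The main obstacle is the initial step: self-adjoint operators for indefinite inner products generically admit Jordan blocks and can have complex eigenvalues, so real diagonalizability of $M$ is not automatic and genuinely uses positive-definiteness of $S$ (through the Cholesky trick $S = P^T P$, which both produces a symmetric conjugate of $M$ and identifies its inertia with that of $E_{21}$). Once past this point, the remainder is routine normalization and sign bookkeeping.
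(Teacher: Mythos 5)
Your proof is correct, and it reaches the conclusion by a genuinely different (though closely related) route. The paper's argument is a direct congruence construction: it forms the symmetric square root $\sqrt{S}=T\sqrt{D}\,T^{-1}$, notes that $\sqrt{S}^{-1}E_{21}\sqrt{S}^{-1}$ is symmetric of signature $(2,1)$, diagonalizes it by an orthogonal $R$ as $\Delta^{-1}E_{21}\Delta^{-1}$, and sets $A=\sqrt{S}^{-1}R\,\Delta$ in one stroke --- no eigenvectors of $E_{21}S$ appear and no case distinction on multiplicities is needed. You instead solve the generalized eigenvalue problem for the pencil $(S,E_{21})$: your Cholesky conjugation $PMP^{-1}=PE_{21}P^{T}$ plays exactly the role of the paper's $\sqrt{S}^{-1}E_{21}\sqrt{S}^{-1}$ (both are congruences of $E_{21}$ by an inverse ``square root'' of $S$, which is where positive definiteness enters to give real diagonalizability and inertia $(2,1)$), but you then assemble $A$ from normalized eigenvectors of $E_{21}S$, which forces the separate treatment of repeated eigenvalues; that degenerate case is handled correctly via proportionality of the two forms on each eigenspace. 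The paper's version buys brevity and uniformity; yours buys the explicit identification of the diagonal entries of $A^{T}SA$ with the roots of $\chi_{S}(\lambda)=\det(S-\lambda E_{21})$ and a direct link to the operator $E_{21}S$, whose Jordan form is precisely what drives the subsequent classification in the case where $\ggpo$ is Lorentzian. Your closing observation that diagonalizability of an $E_{21}$-self-adjoint operator is not automatic and genuinely requires positive definiteness of $S$ is exactly the right point, and it is the same mechanism the paper exploits.
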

\begin{proof}
	There exists orthogonal matrix $T\in SO(3)$ such that
	\begin{align*}
T^{-1}ST = D = diag (d_1, d_2, d_3), \enskip d_i>0.
	\end{align*}
Then $S = T D T^{-1}$ and we denote symmetric matrix $\sqrt{S}= T\sqrt{D}\, T^{-1}$, where $\sqrt{D} = diag (\sqrt{d_1}, \sqrt{d_2}, \sqrt{d_3}).$
The matrix $ \sqrt{S}^{-1} E_{21}\sqrt{S}^{-1} = (\sqrt{S}^{-1})^T E_{21}\sqrt{S}^{-1} $ is also symmetric  (and has the same signature as $E_{21}$). Therefore it can be diagonalized by orthogonal matrix $R\in SO(3)$
\begin{align}
	\label{eq:so21diag}
	R^T ((\sqrt{S}^{-1})^T E_{21}\sqrt{S}^{-1}) R = diag (\frac{1}{\delta _1^2}, \frac{1}{\delta _2^2}, -\frac{1}{\delta _3^2}) = \Delta ^{-1} E_{21} \Delta ^{-1}
\end{align}
where $\Delta = diag ( \delta _1,\delta _2, \delta _3).$  If we denote
$A = \sqrt{S}^{-1} R\, \Delta$,
then $\det A >0$ and
\begin{align*}
	A^T E_{21} A = E_{21}, \quad A^T\,S\,A = \Delta ^2 = diag (\delta _1^2 , \delta _2^2, \delta _3^2 ),
\end{align*}
which completes the proof.
\end{proof}

Suppose that metric $\langle \cdot, \cdot  \rangle $ is represented by matrix $S_{21}$ or $S_{12}$  given by~\eqref{eq:s21tmp} and matrix $S$ is positive definite (or negative definite). From the Lemma~\ref{le:possop} it follows that there exists matrix $A \in SO(2,1)$ that diagonalizes $S.$ Corresponding  automorphism $F\in \Aut(E_{21})$ given by~\eqref{eq:aut21} with $B=0$ brings metric to the canonical form
\begin{align}\label{eq:metrike2}
		S_{21}=\begin{pmatrix}
			\pm \Delta &  0 \\
			0 & E_{21}
		\end{pmatrix}\quad \mbox{or} \quad
		S_{12}=\begin{pmatrix}
			\pm \Delta  &  0 \\
			0 & E_{12}
		\end{pmatrix},
	\end{align}
	where $\Delta = diag ( \delta _1^2, \delta _2^2,  \delta _3^2).$

\subsubsection{$\ggp$ is Lorentzian, $\ggpo$ is Lorentzian}

Suppose that metric $\langle \cdot, \cdot  \rangle $ is represented by matrix $S_{21}$ (or $S_{12}$)  given by~\eqref{eq:s21tmp} and matrix $S$ is Lorentzian i.e. of signature $(2,1)$ (or signature $(1,2)$).

Finding canonical form of $S_{21}$ using automorphism $F\in \Aut(E_{21})$ given by~\eqref{eq:aut21} reduces to:
\begin{problem}{1}
	\label{pr:metrike}
 Find equivalence classes of symmetric matrices $S$ of Lorentzian signature under the action of group $O(2,1)$.
\end{problem}

It is useful to consider  group $O(2,1)$ as group of isometries of hyperbolic plane.  It is best seen in Klein projective model of hyperbolic plane~\cite{Coxeter}.

Any symmetric non-degenerate matrix $H$ can be regarded as projective conic $\Gamma (H)$ with equation
\begin{align*}
	\Gamma (H) : \quad 0 = x^T H x,
\end{align*}
where $x = (x_1\enskip x_2\enskip x_3)^T$ denotes column vector of  homogenous coordinates $(x_1:x_2:x_3).$
For instance, the Absolute of Klein model $0 = x_1^2 + x_2^2 - x_3^2$ is conic $\Gamma (E_{21})$.
We restrict our attention only to conics represented by  symmetric matrix of signature $(2,1)$ since the case of signature $(3,0)$ (and $(0,3)$) we covered in the previos subsection. Also, signature $(3,0)$ matrix $S$ represents ``empty set'' conic in real projective geometry.

Projective map $x\to Cx$, represented by non-degenerate $3\times 3$ matrix $C$ maps conic $\Gamma (H)$ to conic $\Gamma (C^THC)$.
Hence, condition $C\in O(2,1)$ for the matrix of projective map is equivalent to preserving the Absolute $\Gamma (E_{21})$, i.e. $C$ is hyperbolic isometry.

Moreover, if $H = S$, the matrix of the metric we want to simplify, then we can regard metric $S$ as  ``conic'' $\Gamma (S)$.
 Therefore, the Problem~\ref{pr:metrike} of classification of metrics is equivalent to the problem of classification of hyperbolic conics:
\begin{problem}{1*}
	Find canonical forms of projective conics under  group of hyperbolic isometries.
\end{problem}
Note, that conic $\Gamma (S)$ must not belong to interior of Absolute (i.e. hyperbolic plane, or de Sitter space), since group $O(2,1)$ also acts on the its exterior (anti de Sitter space).

The classification of hyperbolic conics is classical and well known result~\cite{Story, Libman}. In the original paper~\cite{Story} there are nine types of conics in the classification, but in more recent literature~\cite{Libman, Rosenfeld, Klein}  $12$ types appear. However, all those classifications are mostly given by pictures only. In the paper~\cite{Fladt} there are equations, but the classification is too complicated and in our case we don't need to distinguish between all $12$ types. We obtain only $4$ types, since we consider conics in projective plane as a whole, rather than conics in hyperbolic plane  which is intersection of  projective plane and  interior of the Absolute. Our classification that follows uses concept explained in~\cite{Rosenfeld}.

We recall some basic facts about hyperbolic isometries in projective Klein model (see e.g.~\cite{Coxeter}).
Let $\Gamma = \Gamma (H), H^T = H$ be a non-degenerate conic. Point  $P(\xi _1: \xi_2 :\xi _3)$ is said to be a \emph{pole} and line
\begin{align*}
 p: p_1x_1 + p_2 x_2 + p_3 x_3  =0,\quad\text{i.e.}\quad p(p_1:p_2:p_3)
\end{align*}
its \emph{polar } with respect to $\Gamma$ if
$\lambda p = HP$, where $\lambda \neq 0$ is used to emphasize the homogenous nature of coordinates.
Observe that  $P\in p$ if and only if $P\in \Gamma (H).$
It is well konwn that  projective maps (or changes of coordinates) $x\to Cx$ preserve the pol-polar relation.

The group of hyperbolic isometries is generated by homologies  $\phi _P$ (Klein reflection) with center $P\not \in \Gamma (E_{21})$ and its  polar $p$ with respect to the Absolute. The Klein reflection $\phi _P(M)$ of point $M$ is defined as point $M'$ such that points $M,M',P,P_M$ are harmonic, where $P_M$ is intersection of $PM$ and $p.$

In the sequel, we are interested in two conics: for $H= E_{21}$ conic $\Gamma (E_{21})$ representing the Absolute and defining the group of admissible transformations $O(2,1)$, and for $H = S$, conic $\Gamma (S)$ representing the metric we want to simplify.

The conic $\Gamma (S)$ is invariant with respect to Klein reflection $\phi _P$ if $P$ and $p$ are also common pol and polar for both conics $\Gamma (E_{21})$ and $\Gamma (S)$.  In that case point $P$ is referred as \emph{center of symmetry}
and $p$ as \emph{line of symmetry} of $\Gamma (S).$ The main idea is that equation of conic will simplify if the coordinates of its center of symmetry are ``nice''.

The condition that $P$ and $p$ are common pol and polar for both $\Gamma (E_{21})$ and $\Gamma (S)$ is $\lambda _1 p = E_{21}P,$ $\lambda _2 p = SP $, or equivalently
\begin{align}
	\label{eq:SPLEP}
	SP = \lambda E_{21}P\enskip \Leftrightarrow \enskip (E_{21} S)P = \lambda P, \enskip \lambda \neq  0.
\end{align}
Nontrivial solution $P\neq (0:0:0)$ of that equation exists if and only if
\begin{align}
	\label{eq:chi}
	\chi _S(\lambda ) := \det (S-\lambda E_{21}) = 0.
\end{align}
Note that $\chi _S(\lambda )$ is not characteristic polynomial of matrix $S.$  Moreover, from~\eqref{eq:SPLEP} is clear that solution of~\eqref{eq:chi} is eigenvalue, and $P$ is eigenvector of {\bf nonsymmetric} matrix $E_{21}S.$

By multiplying~\eqref{eq:SPLEP} by $P^T$ from the left we obtain that for common pole $P$
\begin{align}
	\label{eq:norme}
	|P|^2_S = P^T S P = \lambda P^T E_{21}P  = \lambda |P|^2,
\end{align}
where we have denoted by $|P|^2_S$ the norm of $P$ with respect to metric $S$ (i.e. $\langle \cdot , \cdot \rangle$) and by $|P|$ the norm with respect to ``hyperbolic'' metric defined by $E_{21}.$

\noindent
Since $\chi _S(\lambda )$ is of degree $3$ there is at least one real eigenvalue $\lambda _1\neq 0$ corresponding to common pole $P_1$.

\begin{enumerate}[wide, topsep=0pt, itemsep=1pt, labelwidth=!, labelindent=0pt, label=\textbf{Case \arabic*.}]
\item\label{case:1}  $\boldsymbol{|P_1|>0}$ \textbf{(equivalenty $\boldsymbol{P_1}$ is in the exterior of the Absolute)}

We can choose new pseudo-orhonormal basis $f = (f_1, f_2, f_3) = C\in O(2,1)$ of $\ggpo$ such that $f_1 = \frac{P_1}{|P_1|},$ and $f_2, f_3$ are arbitrary.
In new coordinates $P_1(1:0:0)$, the matrix of the Absolute is unchanged and
\begin{align*}
	\lambda p_1 = E_{21}P_1 = (1\enskip 0 \enskip 0)^T.
\end{align*}
The matrix $S$ of metric $\langle \cdot , \cdot \rangle$ has changed to $\bar S = C^TSC = (\bar s_{ij})$, which we want to determine.
But, regardless of the change of coordinates $P_1$ and $p_1$ are pol and polar with respect to the same conic $\Gamma (\bar S)$:
\begin{align*}
	(\lambda \enskip 0 \enskip 0)^T  = \lambda p_1 = \bar S P_1 = (\bar s_{11}\enskip \bar s_{12}\enskip \bar s_{13})^T,
\end{align*}
and we have $\bar s_{12}  =  \bar s_{13} = 0.$ Moreover
\begin{align*}
	\bar s_{11} = \langle f_1, f_1\rangle  = |f_1|^2_S = \lambda _1 |f_1|^2 = \lambda _1.
\end{align*}
Therefore, in the case $|P_1|^2>0,$ we may assume that the matrix $S$ of the  metric  is of the form
\begin{align}
	\label{eq:sl1}
	S = \begin{pmatrix}
		\lambda _1 & 0 & 0\\
		0          & s_{22} & s_{23}\\
		0          & s_{23} & s_{33}
	\end{pmatrix}.
\end{align}
Now we discuss the possible Jordan forms of the matrix $E_{21}S.$

\begin{enumerate}[wide, topsep=0pt, itemsep=1pt, labelwidth=!, labelindent=0pt, label=\textbf{Case 1\alph*)}]
\item\label{case:1a} $\boldsymbol{E_{21}S}$ \textbf{is diagonalizable: $\boldsymbol{E_{21}S\sim diag (\lambda _1, \lambda _2, \lambda _3)}$}

If $P_1, P_2, P_3$ are corresponding eigenvectors, then the triangle $P_1P_2P_3$ is autopolar with respect to both conics $\Gamma (E_{21})$ and $\Gamma (S).$ This means that $P_i$ is pol of the line $P_j P_k$ for all distinct $i,j,k.$
Since $|P_1|>0,$ i.e. $P_1$ is in the exterior of the Absolute, it is easy to prove that exactly one of $P_2$ and $P_3$ has to be in the interior-let it be $P_3.$ Therefore: $|P_1|^2, |P_2|^2 >0, |P_3|^2<0.$
As in~\ref{case:1}, and more, we choose
\begin{align*}
	f_1 = \frac{P_1}{|P_1|},\quad f_2 = \frac{P_2}{|P_2|},\quad f_3 = \frac{P_3}{|P_3|}.
\end{align*}
The fact that $P_1P_2P_3$ is autopolar ensures that $f = (f_1, f_2, f_3) = C\in O(2,1).$ We already know that in the new basis (because of the choice of $f_1$) the matrix of metric conic is of the form~\eqref{eq:sl1}. The  new coordinates of the points are $P_1(1:0:0),$ $P_2(0:1:0),$ $P_3(0:0:1).$ Using the fact that $p_2(0:1:0)$ is polar of the pole $P_2$ with respect the metric conic $\Gamma (S)$ we obtain $s_{23}=0.$ It is easy to check that canonical form is
\begin{align}
	\label{eq:metrike3}
	S =
	\begin{pmatrix}
		\lambda _1 & 0 & 0\\
		0          & \lambda _2 & 0\\
		0          & 0 & \lambda _3
	\end{pmatrix}
\end{align}
where two of $\lambda _i$ are positive and one is negative.

\item $\boldsymbol{E_{21}S}$ \textbf{has Jordan form} $\boldsymbol{	\begin{pmatrix}
		\lambda _1 & 0 & 0\\
		0          & \lambda _2 & 1\\
		0          & 0 & \lambda _2
	\end{pmatrix}}$.

One calculates that $E_{21}S$ has double  eigenvalues if and only if
\begin{align*}
	(s_{22}+ s_{33})^2 - 4 s_{23}^2 = 0 \enskip \Leftrightarrow \enskip s_{23} = \pm \frac{s_{22} + s_{33}}{2}.
\end{align*}
It is easy to check that the automorphism $C = diag (1,1,-1) \in O(2,1)$ changes $s_{23}$ to $-s_{23}$, so we can suppose that $s_{23} = \frac{s_{22} + s_{33}}{2}.$ We obtain canonical form
\begin{align}
	\label{eq:metrike4}
 S =\begin{pmatrix}
		\lambda _1 & 0 & 0\\
		0          & s_{22} & \frac{s_{22} + s_{33}}{2}\\
		0          & \frac{s_{22} + s_{33}}{2} & s_{33}
	\end{pmatrix}, \enskip \lambda _1 >0,\, s_{22}\neq s_{33}.
\end{align}
The condition on the coefficients ensure that signature of $S$ is $(2,1)$.

\item $\boldsymbol{E_{21}S}$ \textbf{has Jordan form} $\boldsymbol{
	\begin{pmatrix}
		\lambda _1 & 0 & 0\\
		0          & z & 0\\
		0          & 0 & \bar z
	\end{pmatrix}, \enskip z\in \CC }$.

One obtains that $E_{21}S$ has complex conjugate eigenvalues if and only if
$(s_{22}+ s_{33})^2 - 4 s_{23}^2 < 0$.
Suppose that $\lambda _1 < 0.$ Then both eigenvalues of matrix $S' =
\begin{pmatrix}
 s_{22} & s_{23}\\
 s_{23} & s_{33}
\end{pmatrix}$ must be positive, i.e.
$s_{22} s_{33} - s_{23}^2 > 0$.
From the previous two inequalities, we obtain $(s_{22} - s_{33})^2 < 0,$ a contradiction. Therefore, case $\lambda _1<0$ is impossible.  For $\lambda _1 >0$ we must have
$	s_{22} s_{33} - s_{23}^2 <0$.
Matrix $S'$ represents restriction of the metric $S$ to the plane spanned by $e_2$ and $e_3$ which is of signature $(1,1).$ Null vectors in that plane are
\begin{align*}
	v_{\pm} = \pm s_{33} e_2 + (\mp s_{23} + \sqrt{-\det S'}) e_3.
\end{align*}
Product of their squared norms (with respect to inner product $E_{21}$)
\begin{align*}
	|v_{-}|^2|v_+|^2  = s_{33}^2((s_{22} + s_{33})^2- 4 s_{23}^2)
\end{align*}
is negative and therefore we may choose $v_+$ to be positive and $v_-$ negative. By hyperbolic rotation
\begin{align*}
	f_2  =\cosh \phi \, e_2 + \sinh \phi \, e_3, \quad 	f_3  =\sinh \phi \, e_2 + \cosh \phi \, e_3
\end{align*}
for some $\phi$ we can achieve that $f_3 = v_-$ (it wouldn't be possible if  $v_\pm$ are null or positive). In the new basis $f_1 = e_1$, $f_2$, $f_3 = v_-$ we have
$s_{33} = 	\langle f_3, f_3 \rangle = |v_-|_S^2 = 0$ (since $f_3$ is chosen to be null vector).

Hence, we obtain the canonical form
\begin{align}
	\label{eq:metrike5}
	 S = \begin{pmatrix}
		\lambda _1 & 0 & 0\\
		0          & s_{22} & s_{23}\\
		0          & s_{23} & 0
	\end{pmatrix}, \enskip \lambda _1 >0,\, s_{23}\neq 0.
\end{align}
\end{enumerate}
\item\label{case:2} $\boldsymbol{|P_1|=0}$ \textbf{(equivalenty $\boldsymbol{P_1}$ on the Absolute)}

In this case the Jordan form of $E_{12}S$ is 	
$\begin{pmatrix}
	\lambda _1 & 1 & 0\\
	0          & \lambda _1 &1\\
	0          & 0 & \lambda _1
\end{pmatrix}$.
From the relation~\eqref{eq:norme} we obtain that $|P_1|_S^2 = 0$ and therefore,
\begin{align*}
	P_1 \in \Gamma (E_{21})\cap \Gamma (S),
\end{align*}
i.e. the $P_1$ belong to the intersection of the conics.
After rotation, we can assume that $P_1$ is any point on the Absolute, for example $P_1(0:-1:1).$ The polar $p_1$ with respect to the Absolute is $p_1(0:1:1)$.
 But $p_1$ is also the polar of $P_1$ with respect to $\Gamma (S):$
 \begin{align}
 	\label{eq:rel1}
 	\lambda p_1 = SP_1 \enskip \Leftrightarrow \enskip s_{12} = s_{13}.
 \end{align}
From the condition that $P_1$ belongs to $\Gamma (S)$ we obtain
\begin{align}
	\label{eq:rel2}
	s_{33} = - s_{22} + 2 s_{23}.
\end{align}
Condition that  $\lambda _1$ is triple root of $\chi _S$ is equivalent to
\begin{align}
	\label{eq:rel3}
	s_{22} =  s_{11} +  s_{23}.
\end{align}
Taking into account relations~\eqref{eq:rel1},~\eqref{eq:rel2} and~\eqref{eq:rel3} we obtain that another intersection point of $\Gamma (S)$ and the Absolute is
\begin{align*}
	M(-4s_{13} s_{23}: 4s_{13}^2 - s_{23}^2 :4s_{13}^2 + s_{23}^2 )
\end{align*}
We would like to map point $M$ to $M_0(1:0:1)$ by transformation $C\in O(2,1)$ while fixing point $P_1.$ The required transformation is homology with center $\{ P \} = MM_0\cap p_1$ and axis being its polar $p = E_{21} P$.
One can show that the matrix of that homology is
\begin{align*}
	C = \begin{pmatrix}
		8 s_{13}^2 & 4 s_{13} (s_{23}-2
		s_{13}) & 4 s_{13} (s_{23}-2
		s_{13}) \\
		4 s_{13} (s_{23}-2 s_{13}) & -4
		s_{13}^2-4 s_{23} s_{13}+s_{23}^2
		& (s_{23}-2 s_{13})^2 \\
		4 s_{13} (2 s_{13}-s_{23}) &
		-(s_{23}-2 s_{13})^2 & -12 s_{13}^2+4
		s_{23} s_{13}-s_{23}^2
	\end{pmatrix}.
\end{align*}
Therefore, we suppose that $M_0(1:0:1)\in \Gamma (S)$ or equivalently $s_{23} = 2s_{13}$ to obtain canonical form
\begin{align}
	\label{eq:metrike6}
S =
\begin{pmatrix}
	s_{11} & s_{13} & s_{13}\\
	s_{13}          & s_{11} - 2 s_{13} & - 2s_{13}\\
	s_{13}          & -2 s_{13} & -s_{11} - 2 s_{13}
\end{pmatrix}, \enskip s_{11}\neq 0.
\end{align}
The signature of this matrix is always Lorentzian. Note that if $s_{13}=0$, we get the previously considered diagonal form~\eqref{eq:metrike3}.

\item\label{case:3} $\boldsymbol{|P_1|<0}$ \textbf{(equivalenty $\boldsymbol{P_1}$ is in the interior of the Absolute)}

Since $|P_1|<0$ we can  choose basis $(f_1, f_2, f_3)\in O(2,1)$ such that $f_1 = \frac{P_1}{|P_1|.}$. Similarly to~\ref{case:1}, one obtains that the metric in that basis has matrix
\begin{align*}
 S =\begin{pmatrix}
		s_{11} & s_{12} & 0\\
		s_{12}  & s_{22} &0\\
		0          & 0 & \lambda _1
	\end{pmatrix}
\end{align*}
Zeroes of the characteristic polynomial~\eqref{eq:chi} are:
\begin{align*}
	-\lambda _1, \, \lambda _{2/3} = \frac{s_{11} + s_{22} \pm \sqrt{4s_{12 }^2 + (s_{22}-s_{22})^2}}{2}.
\end{align*}
We see that the polynomial can have no multiple roots, nor complex conjugated roots, and we obtain only the~\ref{case:1a}, i.e. canonical form of metric is~\eqref{eq:metrike3}.
\end{enumerate}

\subsection{$\ggp$ is degenerate of rank $2$  (metrics $S_{20},$ $S_{02},$  $S_{11}$)}

\subsubsection{Case $S_{20},$ $S_{02}$}
\label{sssec:20}
Suppose that in basis $e$ the metric $\langle \cdot, \cdot \rangle$ is represented by matrix $S_{20}$ or  $S_{02}$  given by~\eqref{eq:spq}.
Therefore, we look for canonical form
\begin{align}\label{eq:2011}
	\begin{pmatrix}
		S &  M \\
		M^T & \pm E_{20}
	\end{pmatrix}
\end{align}
with $S$ and $M$ as simple as possible.
We first describe the group of isometries of degenerate inner product $E_{20}$.

\begin{lemma}
	The subgroup of $Gl_3(\RR)$ that preserves degenerate quadratic form represented by matrix $E_{20}$ is
	\begin{align}
		\label{eq:o320}
	O_3(2,0) =\left\{
		\begin{pmatrix}
		\lambda &  a & b\\
		0 & \cos \phi & \mp \sin \phi\\
		0 & \sin \phi & \pm \cos \phi
	\end{pmatrix}  \enskip | \enskip a,b,\phi, \lambda  \in \RR, \, \lambda \neq 0  \right\} .
	\end{align}
\end{lemma}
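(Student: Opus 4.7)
The plan is a direct computation. Write a general $A = (a_{ij}) \in GL_3(\RR)$ and expand $A^T E_{20} A = E_{20}$ entrywise. Since $E_{20} = \mathrm{diag}(0,1,1)$, the $(i,j)$-entry of $A^T E_{20} A$ is $a_{2i}a_{2j} + a_{3i}a_{3j}$, so the matrix equation is equivalent to the six scalar conditions obtained from the symmetric positions.

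First I would read off the diagonal and mixed conditions. The $(1,1)$ entry yields $a_{21}^2 + a_{31}^2 = 0$, which (over $\RR$) forces $a_{21} = a_{31} = 0$; the $(1,2)$ and $(1,3)$ entries are then automatic. What remains are $a_{22}^2 + a_{32}^2 = 1$, $a_{23}^2 + a_{33}^2 = 1$, and $a_{22}a_{23} + a_{32}a_{33} = 0$, which together say exactly that the lower-right $2 \times 2$ block $R = \begin{pmatrix} a_{22} & a_{23}\\ a_{32} & a_{33}\end{pmatrix}$ is orthogonal. Thus $A$ must be block upper-triangular with $R \in O(2)$, the first-row entries $(a_{11}, a_{12}, a_{13}) = (\lambda, a, b)$ being free, and invertibility of $A$ equivalent to $\lambda \neq 0$ (since $\det R = \pm 1$).

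Finally I would convert $R \in O(2)$ to the standard parametrization: $R = \begin{pmatrix} \cos\phi & -\sin\phi\\ \sin\phi & \cos\phi\end{pmatrix}$ when $\det R = 1$, and $R = \begin{pmatrix} \cos\phi & \sin\phi\\ \sin\phi & -\cos\phi\end{pmatrix}$ when $\det R = -1$. Combining the two cases into the $\mp/\pm$ signs gives exactly the matrix form displayed in~\eqref{eq:o320}. The converse inclusion (that every such matrix preserves $E_{20}$) is a one-line check by performing the same block multiplication in reverse. There is no real obstacle here; the only thing worth emphasizing is that the positivity of $a_{21}^2 + a_{31}^2$ is what kills the whole first column below the diagonal and reduces everything to a two-dimensional orthogonal group sitting inside the stabilizer of the radical $\RR\langle e_1\rangle$ of $E_{20}$.
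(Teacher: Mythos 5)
Your computation is correct and complete: the $(1,1)$-entry condition $a_{21}^2+a_{31}^2=0$ kills the subdiagonal first column, the remaining conditions say the lower-right block lies in $O(2)$, the free first row with $\lambda\neq 0$ accounts for invertibility, and the $\mp/\pm$ signs correctly encode the rotation/reflection cases of $O(2)$. The paper states this lemma without proof, treating it as a routine verification, and your direct entrywise expansion of $A^T E_{20}A=E_{20}$ is exactly the standard argument that fills that gap.
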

From this lemma and Lemma~\ref{le:aut} we derive the  subgroup of $\Aut (\gg)$ preserving form of matrix~$S_{20}$ or $S_{02}$
\begin{align}
	\label{eq:aut20}
	\Aut (E_{20}) &=   \Aut(E_{02}) = \left\{
	\begin{pmatrix}
		\pm A & 0\\
		B & A^*
	\end{pmatrix}\, \right\}, \\
	\nonumber
	A  & =
	\begin{pmatrix}
		\lambda &  0 & 0\\
		a & \frac{\cos \phi}{\lambda } &  \frac{\sin \phi}{\lambda }\\
		b & - \frac{\sin \phi}{\lambda } & \frac{\cos \phi}{\lambda }
	\end{pmatrix}, \quad
		A^* = 	\begin{pmatrix}
	\frac{1}{\lambda ^2} &  \frac{-a \cos \phi + b\sin \phi}{\lambda } & \frac{-b \cos \phi - a\sin \phi}{\lambda }\\
	0 & \cos \phi & \mp \sin \phi\\
	0 & \sin \phi & \pm \cos \phi
	\end{pmatrix}.
\end{align}
We denote  automorphism $F\in \Aut (E_{20})$ of the form~\eqref{eq:aut20} by
$F (\lambda , a, b, \phi , B), \enskip B = (b_{ij})$.

Subalgebra $\ggp = \RR \langle e_4, e_5, e_6 \rangle$ is degenerated, and from ~\eqref{eq:epq} and~\eqref{eq:spq}  we see that $e_4 \in \ggpo$. Moreover $\ggp \cap \ggpo = \RR \langle e_4\rangle $ and therefore $\ggp + \ggpo$ has codimension one in $\gg.$

Finding canonical form of the metric $S = S_{20}$  consists of several steps where we apply  automorphisms in very certain order. To simplify the notation we will always denote the resulting matrix by $S$ and keep the same notation for its entries, although the entries  change.

 The automorphism are quite restrictive in the plane $\RR \langle e_2, e_3 \rangle$ where we basically can choose new basis only by rotation. We have three cases that correspond to the following geometrical  situations:
\begin{enumerate}[wide, topsep=0pt, itemsep=1pt, labelwidth=!, labelindent=30pt, label=\textbf{Case \arabic*.}]
	\item\label{case:11} $\RR \langle e_2, e_3 \rangle \cap \ggpo$ is non-null.
	\item\label{case:22} $\RR \langle e_2, e_3 \rangle \subset \ggpo$.
	\item\label{case:33} $\RR \langle e_2, e_3 \rangle \cap \ggpo$ is null vector.
\end{enumerate}
The first step is common for all three cases.

\begin{enumerate}[wide, topsep=0pt, itemsep=1pt, labelwidth=!, labelindent=0pt, label=\textbf{Step \arabic*.}]
\item\label{step:1} On matrix $S = S_{20}$ we first apply automorphism $F(1,0,0,\phi,B)$ where
\begin{align*}
	\cos \phi &= \frac{m_{31}}{\sqrt{m_{21}^2 + m_{31}^2}}, & 	\sin \phi &= \frac{m_{21}}{\sqrt{m_{21}^2 + m_{31}^2}}, &\\
	b_{22}&= -m_{22} m_{31} + m_{21} m_{32}, & b_{32}&= -m_{23} m_{31} + m_{21} m_{33}.&
\end{align*}
This results with the matrix $F^TSF$ with
$	m_{12} = m_{22} = m_{32} = 0$.
\end{enumerate}

\ref{case:11} $\boldsymbol{s_{22}\neq 0, m_{31}\neq 0}$
\begin{enumerate}[wide, topsep=0pt, itemsep=1pt, labelwidth=!, labelindent=15pt, label=\textbf{Step \arabic*.}]
\setcounter{enumi}{1}
\item We apply  automorphism $F(1,a,b,0,B)$ where
\begin{align*}
	a = \frac{m_{11} s_{23}-m_{31}s_{12}}{m_{31} s_{22}}, \enskip b = \frac{-m_{11} + \sqrt[3]{m_{31}}}{m_{31}}, 	 \enskip b_{12}  =  -\frac{s_{23}}{m_{31}},
	\end{align*}
$b_{13}$ is complicated, so it is ommited and the remaining $b_{ij}$ are zero.
After this action we get
$	s_{12} = 0 = s_{13}, \quad m_{11} = \sqrt[3]{m_{13}}$.

\item We apply automorphis $F(1,0,0,0,B)$ where
\begin{align*}
	b_{21} = -m_{12}, \enskip b_{23} = -m_{32},\enskip  b_{31} = -m_{13}, \enskip b_{33} = -m_{33}, \enskip b_{11} = \frac{m_{12}^2 + m_{13}^2 - s_{11}}{2 \sqrt[3]{m_{13}}},
\end{align*}
Here we have to use  complicated parameter $b_{13}$ again and the remaining $b_{ij}$ are zero. The resulting matrix of metric has
$ s_{11} = m_{12} = m_{13} = m_{32} = m_{33} = 0$.

\item The automorphism $F(\lambda, 0,0,0,B),$ with $\lambda = \sqrt[3]{m_{13}}, B=0$
simultaneously sets
$	m_{11} = m_{13} = 1$,
and we obtain canonical form~\eqref{eq:2011} of the metric, with:
\begin{align}
	\label{eq:metrike7}
	 S=\begin{pmatrix}
  0 & 0 & 0\\
  0 & s_{22} & 0\\
  0 & 0 & s_{33}
\end{pmatrix},\enskip s_{22}\neq 0, s_{33} \neq 0,\quad
M=\begin{pmatrix}
  1 & 0 & 0\\
  0 & 0 & 0\\
  1 & 0 & 0\\
\end{pmatrix}.
\end{align}
\end{enumerate}

\ref{case:22} $\boldsymbol{m_{31}= 0}$
\begin{enumerate}[wide, topsep=0pt, itemsep=1pt, labelwidth=!, labelindent=15pt, label=\textbf{Step \arabic*.}]
\setcounter{enumi}{1}
\item By automorphism  $F(1,0,0,0,B), b_{23} = -m_{32}, b_{33} = -m_{33}$
we get matrix $F^TSF$ with $m_{32} = 0 = m_{33}$.

\item Now we achieve $s_{11} = s_{12} = s_{13} = 0 = m_{12} = m_{13} $ with appropriate choice of parameters $a, b, b_{12}, b_{13}, b_{11}.$

\item\label{case:22:s4} The automorphism $F(\lambda, 0, 0, \phi, B),$ $\lambda = m_{11},$ $B= 0,$ where $\phi$ is chosen in such way that corresponding rotation
diagonalizes metric in $\RR\langle e_2, e_3 \rangle$ yield the canonical form~\eqref{eq:2011}, with:
\begin{align}
	\label{eq:metrike8}
  S=\begin{pmatrix}
  0 & 0 & 0\\
  0 & s_{22} & 0\\
  0 & 0 & s_{33}
\end{pmatrix},\enskip s_{22}, s_{33} \neq 0,\quad
M=\begin{pmatrix}
  1 & 0 & 0\\
  0 & 0 & 0\\
  0 & 0 & 0\\
\end{pmatrix}.
\end{align}
\end{enumerate}

\ref{case:33} $\boldsymbol{s_{22}= 0}$

In similar way, but without use of rotation,  we obtain canonical  form~\eqref{eq:2011}:
\begin{align}
	\label{eq:metrike9}
S=\begin{pmatrix}
  0 & s_{12} & 0\\
  s_{12} & 0 & 0\\
  0 & 0 & 0
\end{pmatrix},\enskip s_{12} \neq 0, \quad M=\begin{pmatrix}
  0 & 0 & 0\\
  0 & 0 & 0\\
  1 & 0 & 0\\
\end{pmatrix}.
\end{align}

\subsubsection{Case $S_{11}$}
Suppose that the metric $\langle \cdot, \cdot \rangle$ is represented by matrix $S_{11}$  given by~\eqref{eq:spq}.

When $\ggp$ has the signature $(0,+,-)$, we have the following automorphisms:
\begin{align}
	\label{eq:aut11}
	\Aut (E_{11}) &= \left\{
	\begin{pmatrix}
		\pm A & 0\\
		B & A^*
	\end{pmatrix}\, \right\}, \\
	\nonumber
	A  & =
	\begin{pmatrix}
		\lambda &  0 & 0\\
		a & \frac{\cosh \phi}{\lambda } &  \frac{\sinh \phi}{\lambda }\\
		b & \frac{\sinh \phi}{\lambda } & \frac{\cosh \phi}{\lambda }
	\end{pmatrix}, \quad
		A^* = 	\begin{pmatrix}
	\frac{1}{\lambda ^2} &  \frac{-a \cosh \phi + b\sinh \phi}{\lambda } & \frac{-b \cosh \phi + a\sinh \phi}{\lambda }\\
	0 & \cosh \phi & -\sinh \phi\\
	0 & -\sinh \phi & \cosh \phi
	\end{pmatrix}.
\end{align}

In the plane $\RR \langle e_2, e_3\rangle$ the automorphisms act as hyperbolic rotation which  doesn't necessarily diagonalize metric in that plane. To precisely describe that action we need the following lemma.

\begin{lemma}\label{le:s11}
	Equivalence classes of symmetric matrix $S = \begin{pmatrix}
		a & b\\
		b & c
	\end{pmatrix}$ under the action $F^TS F$ where $F\in SO(1,1)$
are
\begin{align}
	\label{eq:sd}
  \begin{pmatrix}
  a' & 0\\
  	0 & c'
  \end{pmatrix} &\enskip \mbox{if} \enskip 4 b^2 \neq  (a+c)^2,
\\
	\label{eq:s01}
 \begin{pmatrix}
  0 & \frac{c-a}{2}\\
  \frac{c-a}{2} & c-a
\end{pmatrix} &\enskip \mbox{if} \enskip 4 b^2 = (a+c)^2, \, |c|>|a|,
\\
	\label{eq:s10}
		 \begin{pmatrix}
		a-c & \frac{a-c}{2}\\
		\frac{a-c}{2} & 0
	\end{pmatrix}&\enskip \mbox{if} \enskip 4 b^2 = (a+c)^2, \, |c|<|a|.
\end{align}
Under the  $F$ which is anti-isometry, i.e. $F^T diag (1,-1) F = diag (-1,1),$  canonical forms~\eqref{eq:s01} and~\eqref{eq:s10} are equivalent.
\end{lemma}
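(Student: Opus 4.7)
My plan is to recast the $SO(1,1)$-action $S \mapsto F^{T} S F$ as a similarity action on the operator $N := J\,S$, where $J = \operatorname{diag}(1,-1)$. Since $F^{T} J F = J$ gives $F^{-1} = J F^{T} J$, a direct check shows that $N \mapsto F^{-1} N F$ under the action, so the Jordan type of $N$ is an invariant of the orbit. The characteristic polynomial of $N$ equals $\lambda^{2} - (a-c)\lambda + (b^{2}-ac)$, whose discriminant is $(a+c)^{2} - 4b^{2}$; this identifies the dichotomy in the lemma with the condition that $N$ has distinct versus repeated eigenvalues.

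In the case $4b^{2} < (a+c)^{2}$ the operator $N$ has two distinct real eigenvalues, and the corresponding eigenvectors $v_{1}, v_{2}$ are $J$-orthogonal by the standard self-adjoint argument. I would argue that both must be non-null, using the fact that in a two-dimensional Lorentzian space the $J$-orthogonal complement of a null vector is that vector's own null line, which rules out two linearly independent $J$-orthogonal null vectors. Rescaling $v_{1}, v_{2}$ to $J$-unit length then produces a pseudo-orthonormal frame (one spacelike, one timelike), and after a possible sign flip the change of basis lies in $SO(1,1)$ and diagonalizes $S$, yielding the form~\eqref{eq:sd}. The complementary sub-case $4b^{2} > (a+c)^{2}$, in which $N$ has complex conjugate eigenvalues, I would treat by passing to null coordinates $u = e_{1}+e_{2}$, $w = e_{1}-e_{2}$ in which $SO(1,1)$ acts diagonally by $(u,w) \mapsto (e^{\phi}u, e^{-\phi}w)$; the invariants $S_{uw} = a-c$ and $S_{uu}S_{ww} = (a+c)^{2}-4b^{2}$ then become manifest, and an appropriate rescaling in the null directions delivers a representative of the claimed diagonal type.

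For $4b^{2} = (a+c)^{2}$ with $N$ not a scalar operator, the repeated eigenvalue $\lambda = (a-c)/2$ admits the null eigenvector $v = (b, -(a+c)/2)^{T}$, as confirmed by the direct computation $|v|_{J}^{2} = b^{2} - (a+c)^{2}/4 = 0$. The dichotomy $|c| > |a|$ versus $|c| < |a|$ identifies which of the two null lines $u = \pm w$ contains $v$. Using the null-coordinate scaling action of $SO(1,1)$ I would align $v$ with the appropriate null axis and fix the single remaining free parameter so that $S$ lands exactly on one of the canonical matrices~\eqref{eq:s01} or~\eqref{eq:s10}. Finally, the anti-isometry $F_{0} = \left(\begin{smallmatrix}0 & 1\\ 1 & 0\end{smallmatrix}\right)$ satisfies $F_{0}^{T} J F_{0} = -J$ and interchanges the two null axes, so a direct computation shows it sends~\eqref{eq:s01} to~\eqref{eq:s10}, which gives the final assertion of the lemma.

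The main obstacle will be the bookkeeping in the Jordan-block case: getting the specific entries $\frac{c-a}{2}$ and $c-a$ in~\eqref{eq:s01}, and the analogous ones in~\eqref{eq:s10}, to emerge exactly as written rather than merely up to a null-direction rescaling. This requires a careful choice of which scalar multiple of the null eigenvector to take as the new basis vector, together with a companion null vector normalized so that the $S$-pairing reproduces the lemma's entries precisely.
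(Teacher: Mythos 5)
Your reduction of the $SO(1,1)$-action to a similarity action on $N=JS$, $J=\operatorname{diag}(1,-1)$, and to the two invariants $a-c=\operatorname{tr}N$ and $(a+c)^2-4b^2=\operatorname{tr}^2N-4\det N$ is sound, and in the subcase $4b^2<(a+c)^2$ your eigenvector argument correctly reproduces the paper's Case 1 (which is done there by an explicit hyperbolic rotation with $\tanh 2\phi=-2b/(a+c)$). The anti-isometry step at the end is also fine.

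There is, however, a genuine gap in the subcase $4b^2>(a+c)^2$, and it is exposed by your own invariants. In null coordinates $S(u,w)=a-c$ and $S(u,u)\,S(w,w)=(a+c)^2-4b^2$ are preserved, but for a diagonal matrix $\operatorname{diag}(a',c')$ one has $S(u,u)=S(w,w)=a'+c'$, so the second invariant equals $(a'+c')^2\ge 0$. Hence when $(a+c)^2-4b^2<0$ the orbit contains \emph{no} diagonal matrix, and the step ``an appropriate rescaling in the null directions delivers a representative of the claimed diagonal type'' cannot be carried out. Concretely, for $S=\left(\begin{smallmatrix}0&1\\1&0\end{smallmatrix}\right)$ every $F^TSF$ has off-diagonal entry $\cosh 2\phi\ge 1$. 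The most one can achieve in this regime is the trace-free form $\left(\begin{smallmatrix}a'&b'\\ b'&-a'\end{smallmatrix}\right)$ with $a'=\tfrac{a-c}{2}$ and $b'=\tfrac{\sgn(b)}{2}\sqrt{4b^2-(a+c)^2}\neq 0$; this is in fact exactly what the rotation written in the paper's own Case 2 produces (it annihilates $a'+c'$, not $b'$), so the statement~\eqref{eq:sd} as printed is not attainable for $4b^2>(a+c)^2$ and no proof strategy can rescue it without amending the target normal form.

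A second, smaller inaccuracy sits in your parabolic case. When $b=\pm\tfrac{a+c}{2}$ the null eigenvector $v=(b,-\tfrac{a+c}{2})^T$ of $N$ is proportional to $(1,\mp 1)$ regardless of the relative size of $|a|$ and $|c|$, so ``which null line contains $v$'' cannot detect the dichotomy between~\eqref{eq:s01} and~\eqref{eq:s10}. The separating invariant is instead the $J$-causal character of the \emph{other} null direction of the quadratic form $S$ itself, namely $(c,-a)$, whose $J$-norm is $c^2-a^2$; this is precisely how the paper's Case 3 proceeds (it rotates that $S$-null direction onto a basis vector, which is possible only when it is non-null for $J$, and the sign of $c^2-a^2$ decides which of the two forms is reached). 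With that correction, your plan for the parabolic case and the final anti-isometry identification go through.
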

\begin{proof}
 Denote $E_{11} = diag (1,-1).$	The group $SO(1,1)$ consists of hyperbolic rotations and their negatives
\begin{align*}
SO(1,1) = \{ F \in Gl_2(\RR ) \, |\, F^T E_{11}F = E_{11}, \det F = 1\} = \left\{
\pm \begin{pmatrix}
	\cosh \phi & \sinh \phi \\
	\sinh \phi & \cosh \phi
\end{pmatrix} \, |\, \phi \in \RR
\right\}.
\end{align*}

\begin{enumerate}[wide, topsep=0pt, itemsep=1pt, labelwidth=!, labelindent=15pt, label=\textbf{Case \arabic*.}]
\item\label{lem:c1} $(a+c)^2-(2b)^2>0: $

It is straightforward to check that hyperbolic rotation by ``angle'' $\phi$ such that
\begin{align*}
\cosh 2 \phi  = \lambda |a+c|,\quad \sinh 2 \phi = -2 \lambda \sgn(a+c) b
\end{align*}
where $\lambda  = ((a+c)^2-(2b)^2)^{-\frac{1}{2}}$ is determined from the condition $\cosh ^2 2\phi - \sinh ^2 2\phi = 1,$
diagonalizes matrix $S$ and we obtain the form~\eqref{eq:sd}.

\item\label{lem:c2} $(a+c)^2-(2b)^2<0: $

In this case we diagonalize $S$ with hyperbolic rotation such that
\begin{align*}
\cosh 2 \phi  = \lambda |2b|,\quad \sinh 2 \phi = -2 \lambda \sgn(b) (a+c)
\end{align*}
and $\lambda  = ((2b)^2-(a+c)^2)^{-\frac{1}{2}}.$

\item\label{lem:c3} $(a+c)^2-(2b)^2=0: $

Suppose that  $ b =  \frac{a+c}{2}$. In this case null directions of metric $S$ are $(1,-1)$ and $(c,-a), a>0$. We will apply hyperbolic rotation such that
one of basis vectors is null. The case $|a| = |c|$ is either diagonal
or impossible.
If $|a|>|c|$ we take hyperbolic rotation such that
\begin{align*}
\cosh \phi = \frac{a}{\sqrt{a^2 - c^2}}, \quad  \sinh \phi = \frac{-c}{\sqrt{a^2 - c^2}},
\end{align*}
to obtain canonical form~\eqref{eq:s10}.
If $|a|<|c|$  we take hyperbolic rotation
\begin{align*}
\cosh \phi = \frac{c}{\sqrt{c^2 - a^2}}, \quad  \sinh \phi = \frac{-a}{\sqrt{c^2 - a^2}},
\end{align*}
to obtain canonical form~\eqref{eq:s01}.
Note that these two cases are not equivalent under the action of $O(1,1)$ since the null direction of metric $S$ belongs to either set of time-like or set of space-like vectors of metric $diag (1,-1)$ that are preserved under  the action of $SO(1,1)$ (and $O(1,1)$ as well).
However, if we admit anti-isometries we can change time-like and space-like vectors, and these two cases are equivalent.
\end{enumerate}

The case  $ b = - \frac{a+c}{2}$ is similar.
\end{proof}
The classification of metrics of type $S_{11}$ is similar the case of type $S_{20}$ with possible difference only when a rotation is used. Hence, in the sequel, we follow the steps from Subsection~\ref{sssec:20}.
We look for the canonical form
\begin{align}\label{eq:2011pm}
	\begin{pmatrix}
		S &  M \\
		M^T & \pm E_{11}
	\end{pmatrix}
\end{align}
with $S$ and $M$ as simple as possible.

Hyperbolic rotation is not transitive on the vectors of the plane. In ``regular'' cases the  hyperbolic rotation can be used instead of Euclidean rotation  and we obtain metrics~\eqref{eq:2011pm} with $S$, $M$ given by~\eqref{eq:metrike7},~\eqref{eq:metrike8} or~\eqref{eq:metrike9}.

Now we discuss ``singular'' cases.
Already in the~\ref{step:1} the hyperbolic rotation is not possible if $m_{21} = \pm  m_{31}\neq 0.$  In fact, those two cases are equivalent by an anti isometric automorphism, so we consider case $m_{21} =   m_{31}.$
After long and detailed analysis we obtain two nonequivalent metrics
\begin{align}
	\label{eq:metrike11}
	&\begin{pmatrix}
		S & M\\ M^T & E_{11}
	\end{pmatrix}, &
	S&= \begin{pmatrix}
		s_{11} & 0 & 0 \\
		0 & s_{22} & 0\\
		0 & 0 & 0
	\end{pmatrix},\enskip s_{11},s_{22} \neq 0, &
	M&=\begin{pmatrix}
		0 & 0 & 0\\
		1 & 0 & 0 \\
		1 & 0 & 0
	\end{pmatrix},\\
	\label{eq:metrike12}
	&\begin{pmatrix}
		S & M\\ M^T & E_{11}
	\end{pmatrix}, &
	S&= \begin{pmatrix}
		0 & s_{12} & 0 \\
		s_{12} & 0 & 0\\
		0 & 0 & 0
	\end{pmatrix},\enskip s_{12} > 0, &
	M&=\begin{pmatrix}
		0 & 0 & 0\\
		1 & 0 & 0 \\
		1 & 0 & 0
	\end{pmatrix}.
\end{align}

If $m_{21} \neq  \pm  m_{31}$ then  in~\ref{step:1}  we can achieve $m_{21}=0$ using hyperbolic rotation and proceed with the remaining steps.

In~\ref{lem:c1} rotation is not used, and no additional canonical forms are obtained.

In~\ref{lem:c2} rotation is  used in~\ref{case:22:s4} to diagonalize metric in $\RR\langle e_2, e_3 \rangle$ plane.
According to Lemma~\ref{le:s11} this is not always possible with hyperbolic rotation, so we obtain additional metrics
\begin{align}\label{eq:metrike10}
&\begin{pmatrix}
  S & M\\ M^T & E_{11}
\end{pmatrix}, &
S&= \begin{pmatrix}
  0 & 0 & 0 \\
  0 & s_{22} & \frac{1}{2}|s_{22}|\\
  0 & \frac{1}{2}|s_{22}| & 0
\end{pmatrix},\enskip s_{22} \neq 0, &
M&=\begin{pmatrix}
  1 & 0 & 0\\
  0 & 0 & 0 \\
  0 & 0 & 0
\end{pmatrix},\\
\label{eq:metrike13}
&\begin{pmatrix}
  S & M\\ M^T & E_{11}
\end{pmatrix}, &
S&= \begin{pmatrix}
  0 & 0 & 0 \\
  0 & 0 & \frac{1}{2}|s_{33}|\\
  0 & \frac{1}{2}|s_{33}| & s_{33}
\end{pmatrix},\enskip s_{33} \neq 0, &
M&=\begin{pmatrix}
  1 & 0 & 0\\
  0 & 0 & 0 \\
  0 & 0 & 0
\end{pmatrix} .
\end{align}
Finally, in~\ref{lem:c3} rotation is not used so  the classification of metrics with center of signature $(0,+,-)$ is complete.

\subsection{$\ggp$ is degenerate of rank $1$  (case $S_{10},$  $S_{01}$)}

Suppose that in basis $e$ the metric $\langle \cdot, \cdot \rangle$ is represented by matrix $S_{10}$ or  $S_{01}$  given by~\eqref{eq:spq}.
Therefore, we look for canonical form
\begin{align}\label{eq:1001}
	\begin{pmatrix}
		S &  M \\
		M^T & \pm E_{10}
	\end{pmatrix}
\end{align}
with $S$ and $M$ as simple as possible.

When $\ggp$ has the signature $(0,0,+)$ or $(0,0,-)$ we have the following group of  automorphisms preserving its canonical form
\begin{align}
	\label{eq:pom}
	\Aut (E_{10}) = \Aut (E_{01}) &= \left\{
	\begin{pmatrix}
		\pm A & 0\\
		B & A^*
	\end{pmatrix}\, \right\}, \\
	\nonumber
	A  & =\begin{pmatrix}
		a_{11} & a_{12} & 0\\
		a_{21} & a_{22} & 0\\
		a_{31} & a_{32} & a_{33}
	\end{pmatrix},\
	A^*=\begin{pmatrix}
		a_{22} a_{33} & {-}a_{21} a_{33} & a_{21} a_{32}{-}a_{22} a_{31} \\
		{-}a_{12} a_{33} & a_{11} a_{33} & a_{12} a_{31}{-}a_{11} a_{32} \\
		0 & 0 & a_{11} a_{22}{-}a_{12} a_{21}
	\end{pmatrix},
\end{align}
with the condition $(a_{11}a_{22}-a_{12}a_{21})^2=1$.
Notice that the automorphism of the form:
\begin{align*}
  F=\begin{pmatrix}
    0 & 1 & 0 & 0 & 0 & 0\\
    1 & 0 & 0 & 0 & 0 & 0\\
    0 & 0 & -1 & 0 & 0 & 0\\
    0 & 0 & 0 & 0 & 1 & 0\\
    0 & 0 & 0 & 1 & 0 & 0\\
    0 & 0 & 0 & 0 & 0 & -1
  \end{pmatrix}
\end{align*}
switches places of elements  $m_{31}$ and $m_{32}$ in matrix $M$.
Hence, we distinguish between two cases: if $m_{31}\neq 0$ and if $m_{31}=m_{32}=0$.
It is worth noting that this is not a simple algebraic distinction.
These two cases will generate completely different geometric properties (see Proposition~\ref{prop:hol}~\ref{hol1} below).

\begin{enumerate}[wide, topsep=0pt, itemsep=1pt, labelwidth=!, labelindent=0pt, label=\textbf{Case \arabic*.}]
\item $\boldsymbol{m_{31}\neq 0}$
\noindent
In this case, we can obtain the following form of $M$:
\begin{align}\label{eq:metrike14}
   \begin{pmatrix}
  0 & m_{12} & 0\\
  1 & 0 & 0\\
  1 & 0 & 0
\end{pmatrix}
\end{align}
by taking the next steps.
\begin{enumerate}[wide, topsep=0pt, itemsep=1pt, labelwidth=!, labelindent=0pt, label=\textbf{Step \arabic*.}]
\item The appropriate choice of elements $a_{21}$ and $a_{31}$ in~\eqref{eq:pom} gives us $m_{11}=m_{32}=0$, while we can set
$a_{22}$ and $a_{32}$ in a way that $m_{21}=t$, $m_{31}=t^2$, where $t\neq 0$ is an arbitrary parameter that will be normalized later.
Finally, by setting $a_{21}$, $m_{22}=0$ is obtained.

\item\label{step:2} Now, by selecting the last row of matrix $B$, we get $m_{13}=m_{23}=m_{33}=0$.

\item\label{step:3} Choosing the remaining elements of matrix $B$, the matrix $S$ in~\eqref{eq:1001} is reduced to $\pm\lambda E_{01}$, $\lambda\neq 0$.

\item In the last step, we normalize both $\lambda$ and $t$ and get the metric~\eqref{eq:1001} with $S=\pm E_{01}$ and $M$ taking the form~\eqref{eq:metrike14}.
\end{enumerate}

\item $\boldsymbol{m_{31}=m_{32}=0}$
\noindent
Since the elements $a_{31}$ and $a_{32}$ do not act on the matrix $M$, the problem reduces to the action $A^TMA$ of matrix:
\begin{align*}
  A=\begin{pmatrix}
    \bar A & 0\\
    0 & 0
  \end{pmatrix},\quad \bar A\in SL(2).
\end{align*}
In the first step, depending on the nature of eigenvalues of matrix $M$, we can choose matrix $\bar A$ such that upper-left $2\times 2$ submatrix of $M$ takes one of the following three forms:
\begin{align*}
  \begin{pmatrix}
  m_{11} & 0\\
  0 & m_{22}
\end{pmatrix},\quad\begin{pmatrix}
  m_{11} & m_{12} \\
  -m_{12} & m_{11}
\end{pmatrix},\quad\begin{pmatrix}
  m_{11} & 0\\
  1 & m_{11}
\end{pmatrix}.
\end{align*}
Next, we can repeat~\ref{step:2} and~\ref{step:3} from the above. Finally, in the last step we again make the basis vector $e_3$ to be unit.
Therefore, our metric $S_{10}=(\pm E_{01}, M, E_{01})$, with $M$ taking one of the three forms:
\begin{align}\label{eq:metrike15}
  \begin{pmatrix}
  m_{11} & 0 & 0\\
  0 & m_{22} & 0\\
  0 & 0 & 0
\end{pmatrix},\quad\begin{pmatrix}
  m_{11} & m_{12} & 0\\
  -m_{12} & m_{11} & 0\\
  0 & 0 & 0
\end{pmatrix},\quad\begin{pmatrix}
  m_{11} & 0 & 0\\
  1 & m_{11} & 0\\
  0 & 0 & 0
\end{pmatrix}.
\end{align}
\end{enumerate}
Note that the case of metric $S_{01}$ can be considered completely analogously.

\subsection{$\ggp$ is degenerate of rank $0$  (case $S_{00}$)}

The last case of totally degenerate center is the only case that can be considered using purely algebraic approach.
Suppose that in basis $e$ the metric $\langle \cdot, \cdot \rangle$ is represented by matrix $S_{00}$   given by~\eqref{eq:spq}.

The group of admissible automorphisms is
\begin{align}
	\label{eq:aut00}
	\Aut (E_{00})  &= \left\{
	\begin{pmatrix}
		\pm A & 0\\
		B & A^*
	\end{pmatrix}\, \right\},
\end{align}
where $A,$ $\det A \neq 0$ and $B$ are arbitrary $3\times 3$ matrices.

If we take the automorphism $F$ of the form~\eqref{eq:aut00} and act on the matrix $S_{00}$,
we get:
\begin{align}\label{eq:00}
  \begin{pmatrix}
	 A^T &  B^T \\
	0 &  (A^*)^T
\end{pmatrix}\begin{pmatrix}
	 S &  M \\
	M^T &  0
\end{pmatrix}\begin{pmatrix}
	 A &  0 \\
	B &  A^*
\end{pmatrix} = \begin{pmatrix}
  A^TSA + A^TMB + (A^TMB)^T & A^TMA^*\\
  (A^TMA^*)^T & 0
\end{pmatrix}.
\end{align}
The non-degeneracy of metric matrix $S_{00}$ gives us that the matrix $M$ must also be regular. Hence, by setting $B=-\frac{1}{2}M^{-1}SA$, the matrix~\eqref{eq:00} takes the form:
 \begin{align*}
   \begin{pmatrix}
  0 & A^TMA^*\\
  (A^TMA^*)^T & 0
\end{pmatrix}
 \end{align*}
and the only thing left to do is to choose a regular matrix $A$ such that $A^TMA^*$ has the simplest form.
However, one must have in mind that the matrix $M$ is not symmetric, therefore it is not necessarily diagonalizable.
At least one eigenvalue of $M$ must be real and the remaining two can either be real (with some multiplicity) or complex conjugate.
Therefore, the possible canonical Jordan forms of $M$ are:
\begin{align}\label{eq:metrike16}
  \begin{pmatrix}
    \lambda_1 & 0 & 0\\
    0 & \lambda_2 & 0\\
    0 & 0 & \lambda_3
  \end{pmatrix},\quad
  \begin{pmatrix}
    \lambda_1 & 0 & 0\\
    0 & \lambda_2 & 1\\
    0 & 0 & \lambda_2
  \end{pmatrix},\quad
    \begin{pmatrix}
    \lambda_1 & 1 & 0\\
    0 & \lambda_1 & 1\\
    0 & 0 & \lambda_1
  \end{pmatrix},\quad
  \begin{pmatrix}
    \lambda_1 & 0 & 0\\
    0 & \lambda_2 & -\lambda_3\\
    0 & \lambda_3 & \lambda_2
  \end{pmatrix}.
\end{align}
We can make one more step to further simplify these forms: by setting the automorphism matrix to be diagonal, in~\eqref{eq:metrike16} we can obtain $\lambda_1 = 1$.

Note that all these metrics have neutral signature.

\subsection{Main result}

The previous extensive analysis proves the following theorem.
\begin{theorem}\label{thm:main}
	The non-isometric left invariant metrics on $\gg$  in basis $e$ with commutators~\eqref{eq:komutatori} are represented by
matrices $S_{pq}=(S, M, E_{pq})$ of the form~\eqref{eq:spq}:
\begin{enumerate}[wide, topsep=0pt, itemsep=1pt, labelwidth=!, labelindent=0pt, label=(\roman*)]
\item if $\ggp$ is non-degenerate:
   \begin{itemize}
     \item[] $S_{30}=(S, 0, \pm E_{30})$, where $S$ takes the form~\eqref{eq:metrike1};
     \item[] $S_{21}=(S, 0, \pm E_{21})$, where $S$ takes one of the forms~\eqref{eq:metrike2},~\eqref{eq:metrike3},~\eqref{eq:metrike4},~\eqref{eq:metrike5} or~\eqref{eq:metrike6};
   \end{itemize}
\item if $\ggp$ is degenerate of rank $2$:
   \begin{itemize}
     \item[] $S_{20}=(S, M, \pm E_{20})$, where $S$ and $M$ take one of the forms~\eqref{eq:metrike7},~\eqref{eq:metrike8} or~\eqref{eq:metrike9};
     \item[] $S_{11}=(S, M, \pm E_{11})$, where $S$ and $M$ take one of the forms~\eqref{eq:metrike7} or~\eqref{eq:metrike9};
     \item[] $S_{11}=(S, M, E_{11})$, where $S$ and $M$ take take one of the forms~\eqref{eq:metrike8},~\eqref{eq:metrike11},~\eqref{eq:metrike12},~\eqref{eq:metrike10} or~\eqref{eq:metrike13};
   \end{itemize}
\item if $\ggp$ is degenerate of rank $1$:
   \begin{itemize}
     \item[] $S_{10}=(\pm E_{10}, M, \pm E_{10})$, where $M$ takes one of the forms~\eqref{eq:metrike14} or~\eqref{eq:metrike15} (here all four combinations of $\pm$ can occur);
    \end{itemize}
\item if $\ggp$ is degenerate of rank $0$:
   \begin{itemize}
     \item[] $S_{00}=(0, M, 0)$, where $M$ takes one of the forms~\eqref{eq:metrike16} with $\lambda_1=1$.
   \end{itemize}
\end{enumerate}
\end{theorem}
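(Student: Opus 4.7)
The plan is to establish the theorem by combining the case-by-case analyses performed in the preceding subsections into a single statement, organized by the isometry type of the restriction $\langle \cdot, \cdot \rangle|_{\ggp}$. First I would invoke the fact, established in the opening of Section~\ref{sec2}, that since $\gg$ is nilpotent and hence completely solvable, non-isometric is the same as non-isomorphic as metric Lie algebras; therefore the classification reduces to finding representatives of the orbits of $\Aut(\gg)$ on symmetric bilinear forms on $\gg$, acting via $S \mapsto F^T S F$ as in~\eqref{eq:action}. Because the commutator subalgebra $\ggp = \mathcal{Z}(\gg)$ is preserved by every automorphism, its induced signature is an invariant of the orbit, which permits the seven-fold split into $S_{30}, S_{21}, S_{20}, S_{11}, S_{10}, S_{00}$ (and their sign-flipped partners).

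Next I would argue that in each signature class one can, by the structure of $\Aut(\gg)$ in Lemma~\ref{le:aut}, first use the $C$-block to bring the restriction to $\ggp$ into one of the canonical forms $\pm E_{pq}$ of~\eqref{eq:epq}; this produces the block matrix $S_{pq}$ of~\eqref{eq:spq}. The residual freedom is then the stabilizer subgroup $\Aut(E_{pq})$ defined in~\eqref{eq:autpq}, which acts on the remaining data $(S, M)$. The proof then proceeds case by case, each subsection providing the required reduction: the non-degenerate definite case yields~\eqref{eq:metrike1}; the Lorentzian case uses Lemma~\ref{le:possop} when $\ggpo$ is Riemannian to obtain~\eqref{eq:metrike2}, and otherwise translates the problem into the projective classification of hyperbolic conics under $O(2,1)$, producing the four canonical forms~\eqref{eq:metrike3}--\eqref{eq:metrike6}; the rank-$2$ degenerate cases are handled by the explicit step-by-step reductions yielding~\eqref{eq:metrike7}--\eqref{eq:metrike13}, with Lemma~\ref{le:s11} controlling the hyperbolic-rotation singularities in the $S_{11}$ case; the rank-$1$ case reduces to the Jordan classification of the $M$ block, giving~\eqref{eq:metrike14}--\eqref{eq:metrike15}; and the totally degenerate case $S_{00}$ is handled algebraically via~\eqref{eq:00} to obtain the Jordan forms~\eqref{eq:metrike16} with $\lambda_1 = 1$.

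The statement of Theorem~\ref{thm:main} is then just the collation of these lists, and the proof I would write consists essentially of the sentence ``collecting the canonical forms produced in Subsections~2.1--2.5 yields the list above''. The only additional care needed is to verify two things that have not been made fully explicit earlier: first, that the representatives listed in different subcases are mutually non-isometric (this follows because the signature of $\ggp$, the rank of the induced form on $\ggpo$, the Jordan structure of $E_{21}S$ in the Lorentzian cases, and the Jordan type of $M$ in the degenerate cases are all $\Aut(\gg)$-invariants that distinguish the families); second, that within each family the remaining parameters $\lambda_i, s_{ij}, m_{ij}$ are genuine moduli and not further reducible by the already-exhausted stabilizer.

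The main obstacle will be this non-redundancy verification, particularly in the rank-$2$ degenerate case where the three subcases~\ref{case:11}, \ref{case:22}, \ref{case:33} are distinguished by subtle geometric conditions on $\ggp \cap \ggpo$ and the position of $\RR\langle e_2, e_3\rangle$ relative to $\ggpo$, and in the $S_{11}$ case where the ``singular'' canonical forms~\eqref{eq:metrike10}--\eqref{eq:metrike13} are inequivalent to the generic ones precisely because hyperbolic rotations cannot bridge the time-like and space-like null-cone components. In each such situation I would point to the invariant already exhibited (the value of $(a+c)^2 - 4b^2$ in Lemma~\ref{le:s11}, or the existence/non-existence of a null vector in the chosen plane) that certifies inequivalence, and thereby close the classification.
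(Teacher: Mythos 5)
Your proposal follows exactly the paper's own route: the theorem is stated as the collation of the canonical forms derived case by case in Subsections 2.1--2.5, organized by the signature of the metric restricted to $\ggp$ (an $\Aut(\gg)$-invariant), with the reduction to the stabilizer $\Aut(E_{pq})$ and the subsequent subcase analyses doing all the work. Your added remarks on verifying mutual non-equivalence of the listed representatives are a reasonable supplement but do not change the argument, which is the same as the paper's.
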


\section{Geometrical properties of left invariant metrics}\label{sec3}

In this section the metrics obtained in Theorem~\ref{thm:main} are further investigated. First, their curvature properties are of interest and then we briefly consider the holonomy algebras for each metric.
Also, we get the description of parallel symmetric tensors for each metric and show that they are derived from parallel vector fields.
Special types of metrics, such as pp-waves or Ricci solitons, are also examined. Since $\gg$ is even dimensional, it is a natural to investigate the invariant complex and symplectic structures. Consequently, the classification of pseudo-K\" ahler metrics is obtained.
In the end, the well known facts about the totally geodesic subalgebras of a nilpotent Lie algebra are summarized and it is shown that for every subalgebra of $\gg$ there exists at least one metric that makes it totally geodesic.

\subsection{Curvature and holonomy of the metrics}\label{ssec:geom}

If $S$ is matrix  corresponding to $\langle \cdot , \cdot \rangle$ is metric in basis $(e_1, \dots , e_6),$ the algebra of  its isometries $so(S) \cong so(p,q), \, p+q=6$ is spanned by endomorphisms $e_i\wedge e_j, \, 1\leq i < j \leq 6$ defined by
\begin{align}
	(e_i\wedge e_j)(x) := \langle e_j, x \rangle  e_i - \langle e_i, x \rangle  e_j, \quad x\in \gg.
\end{align}
For the left invariant vector fields  $x, y, z\in \gg$ Koszul's formula reduces to
\begin{align}\label{eq:koszul}
2 \langle\nabla _x y, z \rangle  = & \langle [x,y],z \rangle - \langle [y,z],x \rangle + \langle[z,x],y \rangle ,
\end{align}
which allows us to calculate  Levi-Civita connection $\nabla$ of metric $\langle\cdot,\cdot\rangle.$
 The curvature $R$ and Ricci tensor $\rho$ are given by:
\begin{align*}
R(x,y)z = \nabla _x(\nabla _y z) - \nabla _y(\nabla _x z) - \nabla_{[x, y]}z,\quad
\rho (x,y) = Tr (z\mapsto R(z,x)y).
\end{align*}
The scalar curvature is defined as trace of Ricci operator.

Metric $\langle \cdot, \cdot \rangle$  is said to be \emph{flat} if the corresponding curvature tensor is zero everywhere, i.e. $R=0$, and it is \emph{locally symmetric} if $\nabla R=0$. Similarly, metric is \emph{Ricci-flat} if $\rho=0$ and \emph{Ricci-parallel} if $\nabla\rho=0$.

We can further simplify the above definitions having in mind that we investigate curvature operators on the nilpotent Lie group. Let us define operators
$\ad^*_x$, $j_x$ and $\varphi_x$:
\begin{align*}
  \langle\ad_x y, z\rangle&=\langle y, \ad^*_x z\rangle, \qquad
    j_x y := \ad^*_y x ;\qquad
  \varphi_x := \ad_x + \ad^*_x .
\end{align*}

Then the following lemma holds.
\begin{lemma}[\cite{Aleksijevski}]
	\label{le:krivine}
  In case of nilpotent Lie algebra $\g$ the curvature and Ricci tensors are given by:
    \begin{align*}
  R(x,y) &= \frac{1}{2} ( j_{[x,y]} + [j_x, \ad^*_y] + [\ad^*_x, j_y]) - \frac{1}{4}([\varphi_x, \varphi_y] + [j_x, \varphi_y] + [\varphi_x, j_y] - [j_x, j_y]),\\
   \rho (x,y) &=-\frac{1}{4}tr(j_x \circ j_y)- \frac{1}{2} tr(\ad_x\circ\ad_y^*),
\end{align*}
for all left invariant vector fields $x, y\in\g.$
\end{lemma}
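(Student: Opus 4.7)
My plan is to read off the Levi--Civita connection from Koszul's formula~\eqref{eq:koszul}, view it as a linear operator $\nabla_x$ acting on left-invariant fields, and then substitute directly into the definitions of $R$ and $\rho$.

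The first step is to rewrite the three terms on the right of~\eqref{eq:koszul} using only $\ad$, $\ad^*$ and $j$. Using the definitions, one checks that $\langle j_x y,z\rangle=\langle x,[y,z]\rangle$, which shows that $j_x$ is skew-adjoint with respect to $\langle\cdot,\cdot\rangle$ and allows one to recognise
\begin{align*}
-\langle [y,z],x\rangle = -\langle j_x y,z\rangle, \qquad \langle [z,x],y\rangle = -\langle j_y x,z\rangle = -\langle \ad^*_x y,z\rangle.
\end{align*}
Collecting terms, Koszul gives the compact formula
\begin{align*}
\nabla_x = \tfrac{1}{2}\bigl(\ad_x - \ad^*_x - j_x\bigr),
\end{align*}
valid on left-invariant vector fields. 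This is the single computational identity from which everything else follows.

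Next I would plug this into $R(x,y)=[\nabla_x,\nabla_y]-\nabla_{[x,y]}$ and expand the commutator. The nine resulting brackets split into three families: the pure $\ad/\ad^*$ brackets (which I would rewrite via Jacobi, $[\ad_x,\ad_y]=\ad_{[x,y]}$, and its adjoint counterpart $[\ad^*_x,\ad^*_y]=-\ad^*_{[x,y]}$); the mixed $\ad^*$--$j$ brackets; and the pure $j$ bracket $[j_x,j_y]$. Subtracting $\nabla_{[x,y]}=\tfrac{1}{2}(\ad_{[x,y]}-\ad^*_{[x,y]}-j_{[x,y]})$ cancels the pure $\ad_{[x,y]}$ and $\ad^*_{[x,y]}$ terms up to a coefficient, leaving exactly one $j_{[x,y]}$ piece. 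Finally, by using $\varphi_x=\ad_x+\ad^*_x$ to repackage
\begin{align*}
[\ad_x,\ad^*_y]+[\ad^*_x,\ad_y]+[\ad_x,\ad_y]+[\ad^*_x,\ad^*_y]=[\varphi_x,\varphi_y],
\end{align*}
and bundling the $\ad\!-\!j$ terms into $[j_x,\varphi_y]+[\varphi_x,j_y]$, one obtains precisely the displayed formula for $R(x,y)$.

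For the Ricci tensor I would compute $\rho(x,y)=\mathrm{tr}\bigl(z\mapsto R(z,x)y\bigr)$ term by term, exploiting three simplifications specific to the nilpotent setting. First, $\ad_z$ is nilpotent so $\mathrm{tr}\,\ad_z=0$ and the Killing form $\mathrm{tr}(\ad_x\ad_y)$ vanishes, killing the $[\varphi_z,\varphi_x]$-type contributions. Second, $j_z$ is skew-adjoint (by the identity above), so traces of the form $\mathrm{tr}(j_z A)$ pair nicely and the mixed commutators $[\varphi_x,j_y]$, $[j_x,\varphi_y]$ yield only the symmetric $\ad_x\ad^*_y$ trace after antisymmetrisation in $z$. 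Third, the cyclic property of trace collapses $[j_x,j_y]$ and $[\ad^*_x,j_y]+[j_x,\ad^*_y]$ into the single combination $\mathrm{tr}(j_x\circ j_y)$ after summing $R(z,x)y$ over $z$. After these cancellations, only the two surviving traces remain, with the coefficients $-\tfrac{1}{4}$ and $-\tfrac{1}{2}$ as stated.

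The main obstacle is the purely bookkeeping step: the curvature expansion produces roughly a dozen commutators and one must be vigilant about signs (in particular the identity $[\ad^*_x,\ad^*_y]=-\ad^*_{[x,y]}$, which is responsible for the $\varphi$--$\varphi$ repackaging) and about the skew-adjointness of $j$ (which is what makes the Ricci traces collapse rather than proliferate). Nilpotency itself is used in a mild but essential way, namely through $\mathrm{tr}\,\ad_z=0$ and the vanishing of the Killing form; without this, additional trace terms would appear on the right-hand side of the Ricci formula.
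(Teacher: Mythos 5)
The paper offers no proof of this lemma: it is imported verbatim from Alekseevski\u{\i}~\cite{Aleksijevski}, so there is no internal argument to compare yours against, and a self-contained derivation like the one you propose is a legitimate substitute. Your curvature computation is correct and complete in outline. From Koszul's formula one indeed gets $\nabla_x=\tfrac12(\ad_x-\ad^*_x-j_x)$, and expanding $R(x,y)=[\nabla_x,\nabla_y]-\nabla_{[x,y]}$ using $[\ad_x,\ad_y]=\ad_{[x,y]}$ and $[\ad^*_x,\ad^*_y]=-\ad^*_{[x,y]}$ reproduces all ten terms of the stated formula with the right coefficients. One wording quibble: subtracting $\nabla_{[x,y]}$ does not cancel the $\ad_{[x,y]}$ and $\ad^*_{[x,y]}$ pieces; they survive with coefficients $-\tfrac14$ and $+\tfrac14$ and are reabsorbed into $-\tfrac14[\varphi_x,\varphi_y]$ precisely because $[\ad_x,\ad_y]+[\ad^*_x,\ad^*_y]=\ad_{[x,y]}-\ad^*_{[x,y]}$ --- which is exactly what your repackaging identity accomplishes, so the outcome is right. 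Note also that nilpotency plays no role in this half: the curvature formula is a general identity for left invariant metrics.

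The Ricci half is where your sketch overstates the cancellations. Since $\rho(x,y)=\tr\bigl(z\mapsto R(z,x)y\bigr)$ traces over the \emph{first} slot, neither cyclicity of the trace nor ``the trace of a commutator vanishes'' applies directly to the operator commutators appearing in $R$. In particular, the vanishing of the Killing form does not kill the $[\varphi_z,\varphi_x]$ contribution outright: tracing it over $z$ still produces terms of the form $\tr(\ad^*_x\ad_y)$ and $\tr(\ad_x\circ j_y)$, which disappear only after cancellation against contributions coming from the mixed $j$--$\varphi$ commutators, not on their own. So the middle step of your Ricci argument, as stated, asserts term-by-term vanishings that do not hold; the bookkeeping must be carried through in full (or one can bypass the curvature formula entirely and specialize the standard unimodular Ricci formula, in which $-\tfrac12\sum_i\langle[x,e_i],[y,e_i]\rangle=-\tfrac12\tr(\ad_x\circ\ad^*_y)$ and $\tfrac14\sum_{i,j}\langle[e_i,e_j],x\rangle\langle[e_i,e_j],y\rangle=-\tfrac14\tr(j_x\circ j_y)$ by skew-adjointness of $j$, the Killing form term dropping out by nilpotency). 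The final coefficients $-\tfrac14$ and $-\tfrac12$ are correct, so this is a repairable gap in the justification rather than a wrong result.
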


In the following statement we describe curvature and  Ricci curvature of metrics on $\gg$ depending on signature of induced metrix of $\ggp$

\begin{proposition}\label{prop:geom}
\begin{enumerate}[wide, topsep=0pt, itemsep=1pt, labelwidth=!, labelindent=0pt, label=(\roman*)]
\item If $\ggp$ is nondegenerate the metric can't be flat or Ricci flat.

\item If $\ggp$ is degenerate of rank $2$, the metrics
$S_{20}=(S,M,\pm E_{20})$, where $S$ and $M$ take the form~\eqref{eq:metrike8}, and $S_{11}=(S,M, E_{11})$, where $S$ and $M$ take one of the forms~\eqref{eq:metrike8},~\eqref{eq:metrike10} or~\eqref{eq:metrike13}, are Ricci-parallel. Specially, Ricci-flat are the metrics:
\begin{align*}
  S_{20}&=(S,M,\pm E_{20}), & S&= \begin{pmatrix} 0 & 0 & 0\\ 0 & s_{22} & 0\\ 0 & 0 & \pm1-s_{22} \end{pmatrix},& M&=\begin{pmatrix}1 & 0 & 0 \\ 0 & 0 & 0 \\ 0 & 0 & 0  \end{pmatrix}&\\
  S_{11}&=(S,M,E_{11}), & S&= \begin{pmatrix} 0 & 0 & 0\\ 0 & s_{22} & 0\\ 0 & 0 & -1+ s_{22} \end{pmatrix},& M&=\begin{pmatrix}1 & 0 & 0 \\ 0 & 0 & 0 \\ 0 & 0 & 0  \end{pmatrix}&\\
  S_{11}&=(S,M,E_{11}), & S&= \begin{pmatrix} 0 & 0 & 0 \\  0 & 1 & \frac{1}{2}\\ 0 & \frac{1}{2} & 0 \end{pmatrix},& M&=\begin{pmatrix}1 & 0 & 0 \\ 0 & 0 & 0 \\ 0 & 0 & 0  \end{pmatrix}&\\
  S_{11}&=(S,M,E_{11}), & S&= \begin{pmatrix} 0 & 0 & 0 \\  0 & 0 & \frac{1}{2}\\ 0 & \frac{1}{2} & -1 \end{pmatrix},& M&=\begin{pmatrix}1 & 0 & 0 \\ 0 & 0 & 0 \\ 0 & 0 & 0  \end{pmatrix}.&
\end{align*}

\item\label{prop:geom2} If $\ggp$ is degenerate of rank $1$, the corresponding metrics $S_{10}=(\pm E_{10}, M, E_{10})$, where $M$ takes one of the forms~\eqref{eq:metrike15}, are locally symmetric and Ricci-flat. Specially, metrics:
\begin{align*}
  S_{10}&=\left(E_{10},M, E_{10}\right), & M&=\begin{pmatrix}\lambda\pm\sqrt{3} & 0 & 0 \\ 0 & \lambda & 0 \\ 0 & 0 & 0  \end{pmatrix}&\\
  S_{10}&=\left(-E_{10},M, E_{10}\right), & M&=\begin{pmatrix}\lambda_1 & \mp\frac{\sqrt 3}{2}& 0 \\ \pm\frac{\sqrt 3}{2} & \lambda_1 & 0 \\ 0 & 0 & 0  \end{pmatrix}
\end{align*}
are flat. Also, the Ricci-parallel metric occurs and it has the form:
\begin{align*}
  S_{10}&=\left(\pm E_{10},M, \pm E_{10}\right), & M&=\begin{pmatrix}0 & \lambda & 0 \\  1 & 0 & 0 \\  1 & 0 & 0   \end{pmatrix}&
\end{align*}
\item If $\ggp$ is degenerate of rank $0$, the corresponding metrics are flat.
\item The only examples of Einstein metrics (i.e. metrics with proportional Ricci curvature and metric tensors) are the trivial, Ricci-flat ones.
\end{enumerate}
\end{proposition}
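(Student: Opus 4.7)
The plan is to treat the five assertions as the output of a single case analysis driven by Lemma~\ref{le:krivine}. For each representative metric listed in Theorem~\ref{thm:main}, I would first compute the Levi--Civita connection coefficients $\nabla_{e_i}e_j$ from Koszul's formula~\eqref{eq:koszul} applied to the commutators~\eqref{eq:komutatori}; the result is a finite table of bilinear expressions in the entries of $S$ and $M$. From this table, $R(e_i,e_j)e_k$, $\rho(e_i,e_j)$, $\nabla R$ and $\nabla\rho$ are obtained by direct substitution, and the various curvature conditions (flat, Ricci-flat, locally symmetric, Ricci-parallel, Einstein) become finite systems of polynomial identities in the parameters of each canonical form.

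For part (i), I would exploit the fact that in the canonical forms~\eqref{eq:metrike1}--\eqref{eq:metrike6} the off-diagonal block satisfies $M=0$, so $\ggp$ and $\ggpo$ are orthogonal. Using the expression $\rho(x,y)=-\tfrac{1}{4}\tr(j_x\circ j_y)-\tfrac12\tr(\ad_x\circ\ad_y^*)$ from Lemma~\ref{le:krivine}, together with the structure constants~\eqref{eq:komutatorieps}, I would compute $\rho(e_i,e_i)$ for $i\in\{1,2,3\}$. The non-vanishing of at least one such entry follows because $\ad_{e_i}$ maps $\ggpo$ onto a nontrivial subspace of the \emph{non-degenerate} $\ggp$, so the trace term cannot vanish in every diagonal position simultaneously. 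Case-splitting on the signature is needed only in the Lorentzian subcases~\eqref{eq:metrike3}--\eqref{eq:metrike6}, where I would just evaluate $\rho$ explicitly on the given block-diagonal forms and inspect.

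For parts (ii)--(iv) the strategy is identical but the arithmetic is heavier: for every family~\eqref{eq:metrike7}--\eqref{eq:metrike16} I would write $\rho$ as a symmetric matrix whose entries are polynomials in $s_{ij}, m_{ij}$, set it to zero to locate the Ricci-flat members (these are the explicit metrics tabulated in the statement), and set $\nabla\rho = 0$ to verify the Ricci-parallel claim across the whole family. For the locally symmetric assertion in~\ref{prop:geom2}, I would compute $\nabla R$ from the connection table and verify it vanishes identically in the parameters. The rank-$0$ case is the cleanest: since $M$ is regular and $S=0$, the Koszul formula gives $\nabla_{e_i}e_j\in\ggp$ for $i,j\le 3$ and $\nabla_{e_i}e_{3+j}=0$, from which $R\equiv 0$ follows immediately. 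Finally, for part (v), I would combine the explicit $\rho$'s already obtained: outside the Ricci-flat list, $\rho$ is either degenerate while $\langle\cdot,\cdot\rangle$ is non-degenerate, or the kernels of $\rho$ and of the metric differ, so $\rho=\lambda\langle\cdot,\cdot\rangle$ forces $\lambda=0$.

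The main obstacle is book-keeping rather than conceptual: the families $S_{11}$ and $S_{10}$ each split into several sub-families with one or two free parameters, and the Ricci-parallel verification requires simplifying fairly long polynomial expressions. I would carry out those simplifications with a computer algebra system, then check by hand only the sign/signature arguments needed in part (i) and the Einstein dichotomy in part (v), since those are the only steps that use actual geometric input beyond routine expansion of Koszul's formula.
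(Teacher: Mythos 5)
Your proposal is correct and follows essentially the same route as the paper: the published proof is exactly a case-by-case computation of the Levi--Civita connection from Koszul's formula~\eqref{eq:koszul}, followed by the curvature and Ricci tensors via Lemma~\ref{le:krivine}, carried out for each canonical form and illustrated in print only for the representative metric $S_{10}=(E_{10},M,E_{10})$ with $M$ as in~\eqref{eq:metrike14}. The only caveat is that your trace heuristic for part (i) should not be relied on in the indefinite subcases (traces of compositions of nonzero maps can vanish there), but you already fall back on explicit evaluation of $\rho$, which is what the paper does.
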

\begin{proof}
The proof is straightforward, but long, since it is a case by case calculation for every canonical form.
We give some details for metric $S_{10}=\left( E_{10},M,  E_{10}\right)$ with $\ggp$ of rank $1,$ where is $M$ given by~\eqref{eq:metrike14}.

Using~\eqref{eq:koszul} we obtain Levi-Civita connection of the metric in terms of non  zero derivations (having in mind the relation $[x,y] =\nabla _x y- \nabla _y x $)
\begin{align}
\nabla _{e_1} e_1 &= m_{12}(-e_2+ e_3), \enskip \nabla _{e_1} e_2 = \frac{1}{2}e_6,  \enskip \nabla _{e_1} e_3 = -e_5 \nonumber\\
2 \nabla _{e_1} e_6 &= -e_2 + e_3 - e_4 = \nabla _{e_2} e_3 = \nabla _{e_3} e_3, \label{eq:nabla}\\
\nabla _{e_2}e_2 &= e_2 - e_3, \enskip   \nabla _{e_2}e_6 = \frac{1}{2m_{12}} e_5.\nonumber
\end{align}
Note, that vectors $e_4,e_5$ are parallel. In Proposition~\ref{pr:paralel} was proven that all parallel vectors are given by their linear combination.

Using Lemma~\ref{le:krivine}, or directly from the definition of curvature, we obtain that non zero curvature operators are given by:
\begin{align*}
	R(e_1,e_2)  & =  \frac{3}{4m_{12}}(-e_2\wedge e_5 + e_3\wedge e_5) + \frac{ 4m_{12}-3}{4m_{12}}\, e_4\wedge e_5 + \frac{1}{2}\,e_4\wedge e_6,\\
	 R(e_1,e_3) &=  e_4\wedge e_5 + \frac{1}{2}\, e_4\wedge e_6 ,\enskip
	 R(e_1,e_6) = -\frac{1}{4m_{12}}\, e_5\wedge e_6, \\
	 R(e_2, e_6) &=  R(e_3, e_6) = -\frac{1}{2m_{12}}\,e_4\wedge e_5.
\end{align*}
By using Lemma~\ref{le:krivine} again we obtain that the only nonzero component of Ricci tensor is
\begin{align}
\rho (e_1, e_1) = -\frac{1}{2}.
\end{align}
One can easily check that this metric is Ricci parallel, $\nabla \rho \equiv 0.$
Also, we check (see the proof of Proposition~\ref{prop:hol}) that $\nabla R\not \equiv 0,$ and therefore the metric is not locally symmetric.
\end{proof}

  In~\cite{Milnor} Milnor proved that in the Riemannian case if the Lie group $G$ is solvable, then every left invariant metric on $G$ is either flat, or has strictly negative scalar curvature. Notice that in the pseudo-Riemannian setting this does not hold.
  In the case of non-degenerate $\ggp$, depending on a signature, scalar curvatures can be positive, negative or zero while in case of degenerate $\ggp,$ all but two metrics have zero scalar curvature. More precisely, we have the following statement which can be obtained from Proposition~\ref{prop:geom}  by direct computation.
\begin{proposition}\label{prop:scalar}
	\begin{enumerate}[wide, topsep=0pt, itemsep=1pt, labelwidth=!, labelindent=0pt, label=(\roman*)]
		\item Scalar curvature of metrics  $(S,0,E_{pq})$, $p+q=3$ on $\gg$ with nondegenerate center  $\ggp$ is given by
		\begin{align*}
\tau=-\frac{\trace(SE_{pq})}{2\det S}.
		\end{align*}
	\item Metrics $(S,M,E_{pq}),\, p+q =2$ with $S$ and $M$ given by~\eqref{eq:metrike7} and~\eqref{eq:metrike9} have non-zero scalar curvature given, respectively, by
$\tau = \mp \frac{ \epsilon}{2 s_{22} s_{33}}$ and  $\tau = \pm\frac{\epsilon}{2 s_{12}^2}$,
where $\epsilon$ is element in position $(3,3)$ of  $E_{pq}.$
\item All other metrics on $\gg$ with degenerate center $\ggp$ have zero scalar curvature $\tau =0.$
		\end{enumerate}
\end{proposition}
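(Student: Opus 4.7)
The plan is to compute $\tau = \sum_{i,j}(G^{-1})^{ij}\rho(e_i,e_j)$ by contracting the Ricci tensors already recorded in Proposition~\ref{prop:geom} against the inverse of the metric matrix. The structural remark that drives every case is that $\ggp = \mathcal{Z}(\gg)$ is simultaneously the derived subalgebra and the centre of $\gg$, so $\ad_{e_k}=0$ for $k\in\{4,5,6\}$ while each of $\ad_{e_1},\ad_{e_2},\ad_{e_3}$ maps $\RR\langle e_1,e_2,e_3\rangle$ into $\ggp$ and kills $\ggp$.

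For part (i), choose the basis so that the metric matrix becomes the block-diagonal $G=\mathrm{diag}(S,E_{pq})$, with $G^{-1}=\mathrm{diag}(S^{-1},E_{pq})$. Centrality of $\ggp$ forces $\ad_x=0$ on $\ggp$, and one checks from $\langle j_x y,z\rangle=\langle x,[y,z]\rangle$ that $j_x=0$ for $x\in\ggpo$, so Lemma~\ref{le:krivine} yields a block-diagonal Ricci tensor: $\rho(e_i,e_{3+j})=0$, $\rho(e_i,e_j)=-\tfrac12\tr(\ad_{e_i}\ad_{e_j}^{*})$ for $i,j\le 3$, and $\rho(e_{3+i},e_{3+j})=-\tfrac14\tr(j_{e_{3+i}}j_{e_{3+j}})$. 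Writing the $\ad$ and $j$ operators in components via~\eqref{eq:komutatorieps} and using $S^{-1}=(\det S)^{-1}\mathrm{adj}(S)$, a short direct calculation collapses the two partial traces into the single quotient $\tau=-\tr(SE_{pq})/(2\det S)$ claimed in the statement.

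For parts (ii) and (iii) the metric no longer splits orthogonally, so I would process the canonical forms of Theorem~\ref{thm:main} one by one, inverting the full $6\times 6$ matrix $G$ and contracting with the Ricci data from Proposition~\ref{prop:geom}. In almost every degenerate-centre family the Ricci tensor either vanishes or has only the single nonzero component $\rho(e_1,e_1)$, so $\tau$ is controlled by the single cofactor $(G^{-1})^{11}$. For the families~\eqref{eq:metrike7} and~\eqref{eq:metrike9} this cofactor evaluates to $\mp\epsilon/(s_{22}s_{33})$ and $\pm\epsilon/s_{12}^2$ respectively, reproducing the two nonzero scalar curvatures in (ii); in every remaining canonical form the $5\times 5$ minor defining $(G^{-1})^{11}$ has a row forced to vanish by the null directions that degeneracy of $\ggp$ imposes on the off-diagonal block $M$, and this degeneracy propagates into the minor to give $\tau=0$.

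The main obstacle is organisational rather than conceptual: a patient case-by-case bookkeeping across the several one- and two-parameter families is required, together with careful tracking of the sign ambiguities $\pm$ and the scalar $\epsilon\in\{\pm 1\}$ coming from the two possible orientations of $E_{pq}$, in order to verify uniformly when the cofactor $(G^{-1})^{11}$ survives the degeneracy. The nondegenerate case, by contrast, is essentially a single self-contained trace computation.
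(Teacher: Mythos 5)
Your proposal takes essentially the same route as the paper, which establishes this proposition purely by ``direct computation'' from the Ricci data of Proposition~\ref{prop:geom}, i.e.\ by contracting the Ricci tensor against the inverse metric for each canonical form; your uniform block-diagonal derivation of the closed formula in part (i) is sound (it uses correctly that $j_x=0$ for $x\perp\ggp$ and $\ad_z=0$ for central $z$, and it checks out against the Riemannian case $S=E_{30}$, where both your formula and the standard $2$-step nilpotent computation give $\tau=-3/2$). The only caveat is that your structural shortcuts for parts (ii)--(iii) --- that the Ricci tensor of the degenerate-centre families is concentrated in $\rho(e_1,e_1)$ and that the relevant cofactor of $G^{-1}$ equals $\mp\epsilon/(s_{22}s_{33})$, resp.\ $\pm\epsilon/s_{12}^2$ --- are asserted rather than verified (for~\eqref{eq:metrike7} the $(1,1)$ entry of $G^{-1}$ is in fact $1/s_{33}$, so the Ricci tensor cannot be supported only at $(1,1)$ there); this does not invalidate the method, since the full contraction you describe would simply pick up the additional nonzero components, but those intermediate claims should not be relied upon as stated.
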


\begin{example}\label{ex:adin}
  In~\cite[Example 5.1]{Ovando2012} the authors considered canonical metric defined by:
  \begin{align*}
    \langle(x, \alpha), (x',\alpha')\rangle = \alpha'(x) + \alpha(x'),\quad \forall x, x'\in\h3,\ \alpha,\alpha'\in\h3^*.
  \end{align*}
  This metric is neutral signature and ad-invariant, meaning that $\langle [x,y],z\rangle=-\langle y, [x,z]\rangle$, for all $x,y,z\in\gg$.
  Notice that this is a special case of our metric $S_{00}=(0, M, 0)$, when $M$ is the identity matrix. This is the only ad-invariant metric on~$\gg$ which confirms the result recently obtained in~\cite{Conti}.
\end{example}

In the sequel we find all parallel vector fields of metrics on $\gg.$ Their presence  has important consequences on holonomy group of the metrics as well as on the existence of parallel symmetric tensors (see Section~\ref{ssec:afine}). They are characterized by the following lemma.
\begin{lemma}\label{le:par}
	The left invariant vector field $x\in\g$ on metric Lie algebra $(\g,\langle  \cdot, \cdot \rangle)$ is parallel (that is $\nabla _y x=0$ for all $y\in \g$) if and only if $x\perp\g'$ and $\ad_x^*=-\ad_x$.
\end{lemma}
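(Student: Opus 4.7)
The plan is to read off both directions directly from the Koszul formula~\eqref{eq:koszul} applied to $\nabla_y x$: substituting $x$ in the middle slot gives
\begin{align*}
2\langle \nabla_y x, z\rangle = \langle [y,x], z\rangle - \langle [x,z], y\rangle + \langle [z,y], x\rangle,
\end{align*}
so $x$ is parallel if and only if this three-term expression vanishes for all $y, z \in \g$.

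For the \emph{if} direction, assume $x \perp \g'$ and $\ad_x^* = -\ad_x$. The last term $\langle [z,y], x\rangle$ vanishes immediately because $[z,y] \in \g'$. The skew-symmetry condition $\ad_x^* = -\ad_x$ is equivalent to $\langle [x,y], z\rangle + \langle y, [x,z]\rangle = 0$, which exactly cancels the remaining two terms $\langle [y,x], z\rangle - \langle [x,z], y\rangle = -\langle [x,y], z\rangle - \langle [x,z], y\rangle$. Hence $\nabla_y x = 0$ for every $y$.

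For the \emph{only if} direction, suppose the three-term expression vanishes for all $y, z$. The main trick is to first set $z = y$: since $[y,y] = 0$ and $\langle [y,x], y\rangle = -\langle [x,y], y\rangle$, we get $\langle [x,y], y\rangle = 0$ for every $y \in \g$. Polarizing $y \mapsto y + z$ and expanding yields $\langle [x,y], z\rangle + \langle [x,z], y\rangle = 0$, which is precisely $\ad_x^* = -\ad_x$. Plugging this identity back into the Koszul expression collapses the first two terms, leaving $\langle [z,y], x\rangle = 0$ for all $y, z$, i.e.\ $x \perp \g'$.

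No serious obstacle is expected: everything is a bookkeeping exercise with Koszul's formula. The only point to be careful about is the sign convention and the order of polarization; once the substitution $z = y$ is used to peel off the skew-symmetry condition, the orthogonality to $\g'$ falls out automatically.
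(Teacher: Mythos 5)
Your proof is correct and follows essentially the same route as the paper, which simply rewrites Koszul's formula in operator form as $\nabla_y x = \tfrac{1}{2}(\ad_y x - \ad^*_y x - \ad^*_x y)$ and asserts the statement follows directly. Your polarization step ($z=y$, then $y\mapsto y+z$) just makes explicit the splitting into the symmetric part (giving $\ad^*_x=-\ad_x$) and the antisymmetric part (giving $x\perp\g'$) that the paper leaves to the reader.
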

\begin{proof}
	Note that the Koszul's formula~\eqref{eq:koszul} can be re-written in the following form:
	\begin{align*}
		\nabla_y x = \frac{1}{2}(\ad_y x - \ad^*_y x -\ad^*_x y),\quad  x,y\in\g.
	\end{align*}
	The statement of the lemma follows from this directly.
\end{proof}

\begin{proposition}
	\label{pr:paralel}
	Let metric on $\gg$ be given by the matrix $S$ from Theorem~\ref{thm:main}. In all cases parallel vector fields are null. Moreover:
	\begin{enumerate}[wide, topsep=0pt, itemsep=1pt, labelwidth=!, labelindent=0pt, label=(\roman*)]
	\item If $\ggp$ is non-degenerate, than there are no parallel vector fields;
	\item If $\ggp$ is degenerate of rank $2$, the only parallel vector fields are $x\in \RR\langle e_4\rangle $;
	\item If $\ggp$ is degenerate of rank $1$, parallel vector fields are $x\in \RR \langle e_4, e_5\rangle $;
	\item If $\ggp$ is totally degenerate, all vectors from $\ggp = \RR \langle e_4, e_5, e_6\rangle $ are parallel.
	\end{enumerate}
\end{proposition}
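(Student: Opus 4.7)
The plan is to apply Lemma~\ref{le:par}: the left invariant vector field $x$ is parallel iff $x \perp \ggp$ and $\ad_x^* = -\ad_x$. The key structural fact I would exploit is that $\ggp$ coincides with the center $\mathcal{Z}(\gg)$, so $\ad_x$ vanishes on $\ggp$ and has image inside $\ggp$ for every $x\in\gg$. Decomposing $x = x_1 e_1 + x_2 e_2 + x_3 e_3 + z$ with $z\in\ggp$, relations~\eqref{eq:komutatorieps} show that, under the identification $e_k \leftrightarrow e_{3+k}$, the operator $\ad_x$ acts as cross-product with $(x_1,x_2,x_3)$. In particular, $\ad_x(\gg)$ is the $2$-plane of $\ggp$ orthogonal to $(x_1,x_2,x_3)$ when that triple is nonzero, and is $\{0\}$ otherwise.

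Testing the adjoint condition against vectors of $\ggp$: since $\ad_x|_{\ggp}=0$ and hence $\ad_x^*|_{\ggp}=0$, one obtains $\langle\ad_x(\gg),\ggp\rangle=0$, i.e.,
\[
\ad_x(\gg)\subseteq\ggp\cap\ggpo=:\mathcal{R},
\]
the radical of the restriction of $\langle\cdot,\cdot\rangle$ to $\ggp$. From the canonical forms of Theorem~\ref{thm:main} one reads off $\mathcal{R}$ immediately: $\mathcal{R}=\{0\}$ in (i), $\mathcal{R}=\RR e_4$ in (ii), $\mathcal{R}=\RR\langle e_4,e_5\rangle$ in (iii), and $\mathcal{R}=\ggp$ in (iv). Cases (i), (ii), (iv) now follow cleanly from dimension considerations: in (i) and (ii) the $2$-dimensional image of $\ad_x$ cannot fit into a subspace of dimension $0$ or $1$, forcing $(x_1,x_2,x_3)=0$, hence $x\in\ggp$; then $x\perp\ggp$ combined with the (non)degeneracy of the form on $\ggp$ gives $x=0$ in (i) and $x\in\RR e_4$ in (ii). In (iv) the condition $x\perp\ggp$ reads $(x_1,x_2,x_3)M=0$ with $M$ invertible, so $x\in\ggp$, after which $\ad_x=0$ and the adjoint identity is automatic.

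Case (iii) is where some genuine work remains. Either $(x_1,x_2,x_3)=0$, landing $x$ directly in $\mathcal{R}=\RR\langle e_4,e_5\rangle$, or the image is exactly $\mathcal{R}$, in which case the orthogonality characterization of the cross-product image forces $(x_1,x_2,x_3)\in\RR e_3$. In this latter branch, substituting $x=x_3 e_3 + z$ with $z\in\ggp$ into the remaining skew-symmetry identities $\langle\ad_x e_i, e_j\rangle+\langle e_i,\ad_x e_j\rangle=0$ for $i,j\in\{1,2,3\}$, together with the orthogonality $\langle x, e_{3+k}\rangle=0$ for $k=1,2,3$, one invokes the explicit entries of $M$ in the normal forms \eqref{eq:metrike14}--\eqref{eq:metrike15} to force $x_3=0$, completing the reverse inclusion.

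Finally, the null claim is automatic: any $v\in\mathcal{R}$ satisfies $\langle v,v\rangle=0$ (as $v\in\ggp$ and $v\perp\ggp$ simultaneously), and in every case the parallel vectors produced lie inside $\mathcal{R}$. The main obstacle will be the bookkeeping in case (iii), where the clean image-dimension argument leaves one potential extra parallel direction that must be excluded by appealing to the explicit shape of $M$ in the classification rather than by purely structural reasoning.
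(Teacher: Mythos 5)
Your overall strategy is the right one and is clearly the intended route (the paper prints no proof of this proposition; it is meant to follow from Lemma~\ref{le:par} by inspecting the canonical forms of Theorem~\ref{thm:main}). The reduction $\ad_x(\gg)\subseteq\ggp\cap\ggpo=:\mathcal{R}$, obtained by testing skew-symmetry against central vectors, together with the observation that $\ad_x$ acts as the cross product with $(x_1,x_2,x_3)$ so that its image is either trivial or two-dimensional, disposes of cases (i), (ii) and (iv) correctly and cleanly, and the identification of $\mathcal{R}$ with $\ker E_{pq}$ in each case is accurate.

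The problem is precisely the step you defer in case (iii): the explicit entries of $M$ do \emph{not} force $x_3=0$ for all the normal forms in~\eqref{eq:metrike15}. Writing out $\langle\ad_{e_3}u,v\rangle+\langle u,\ad_{e_3}v\rangle=0$ for $u,v\in\{e_1,e_2\}$ (using $\ad_{e_3}e_1=e_5$, $\ad_{e_3}e_2=-e_4$) yields exactly the conditions $m_{12}=0$, $m_{21}=0$ and $m_{11}=m_{22}$; every remaining identity and the orthogonality requirement $\langle e_3,e_{3+k}\rangle=m_{3k}=0$ hold automatically for each $M$ in~\eqref{eq:metrike15}. Hence for $M=\mathrm{diag}(\lambda,\lambda,0)$ (the first form of~\eqref{eq:metrike15} with $m_{11}=m_{22}$, equivalently the second with $m_{12}=0$ --- a metric the paper itself invokes in the proof of Proposition~\ref{prop:totgeod}) the vector $e_3$ satisfies both conditions of Lemma~\ref{le:par} and is therefore parallel; a direct Koszul computation confirms $\nabla e_3=0$ for $S_{10}=(\pm E_{10},\mathrm{diag}(\lambda,\lambda,0),\pm E_{10})$. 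Since $\langle e_3,e_3\rangle=\pm1\neq0$, this is a non-null parallel field outside $\RR\langle e_4,e_5\rangle$, contradicting both assertions of the proposition for that metric. So the claim with which you close case (iii) is false as written: either the statement requires this one-parameter family to be excluded or amended, or your proof must flag the exception explicitly rather than assert that $x_3=0$ always follows. For the remaining normal forms ($m_{11}\neq m_{22}$, $m_{12}\neq0$, the Jordan-block form with $m_{21}=1$, and~\eqref{eq:metrike14}, where $m_{31}=1$ already kills $x_3$ through the orthogonality condition) your argument does go through.
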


Note that, since nilpotent group is simply connected, the restricted holonomy
group coincides with full holonomy group. According to Ambrose-Singer theorem the holonomy algebra is
generated by curvature operators $R(x, y)$ and their covariant derivatives of any order. We know that holonomy algebra is
subalgebra of isometry algebra, i.e. $so(p,q)$, where $(p,q)$ denotes the signature of the metric.

The results are summarized in the following proposition.
\begin{proposition}\label{prop:hol}
	Let the non-flat metric on $\gg$ be given by the matrix $S$ from Theorem~\ref{thm:main}.
	\begin{enumerate}[wide, topsep=0pt, itemsep=1pt, labelwidth=!, labelindent=0pt, label=(\roman*)]
		\item If $\ggp$ is non-degenerate, the corresponding metrics have full holonomy algebra, $hol (S) = so(p,q), \, p+q=6$.
		\item If $\ggp$ is degenerate of rank 2, then the following cases can occur:
		\begin{enumerate}[wide, topsep=0pt, itemsep=1pt, labelwidth=!, labelindent=25pt, label=($ii_\arabic*$)]
			\item the holonomy algebra is 10-dimensional $so(p,q)$, $p+q=5$, if the corresponding metric is $S_{20}=(S,M,\pm E_{20})$ or $S_{11}=(S,M,\pm E_{11})$, where matrices $S$ and $M$ take one of the forms~\eqref{eq:metrike7} or~\eqref{eq:metrike9}, and $S_{11}=(S,M, E_{11})$, with $S$ and $M$ taking the form~\eqref{eq:metrike11};
			\item\label{hol9dim} the holonomy algebra is 9-dimensional isomorphic to $sl_2(\RR) \ltimes \g_{6,54}$, where $\g_{6,54}$ is six dimensional solvable algebra with five dimensional nilradical, (see \cite{Mubarakzyanov}),
in case of the metric $S_{11}=(S,M, E_{11})$, with $S$ and $M$ taking the form~\eqref{eq:metrike12};
			\item the holonomy algebra is 4-dimesional and isomorphic to $\RR^4$, if the corresponding metric is $S_{20}=(S,M,\pm E_{20})$ or $S_{11}=(S,M,\pm E_{11})$, where matrices $S$ and $M$ take the form~\eqref{eq:metrike8}, $S_{11}=(S,M, E_{11})$, with $S$ and $M$ taking one of the forms~\eqref{eq:metrike10} or ~\eqref{eq:metrike13}.
		\end{enumerate}
		\item\label{hol1} If $\ggp$ is degenerate of rank 1, the non-flat metrics $S_{10}=(\pm E_{10}, M, \pm E_{10})$, where $M$ takes one of the forms~\eqref{eq:metrike15}, have holonomy algebra isomorphic to $\RR$, while if $M$ take the form~\eqref{eq:metrike14}, the holonomy algebra is 5-dimensional and isomorphic to the 2-step nilpotent algebra given by the commutators $[h_1, h_3]=[h_2, h_4]=h_5$.
	\end{enumerate}
\end{proposition}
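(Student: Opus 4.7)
The plan is to apply the Ambrose--Singer theorem: the holonomy algebra $\mathrm{hol}(S)$ sits inside $\mathfrak{so}(S)\cong\mathfrak{so}(p,q)$ and is generated by the curvature operators $R(e_i,e_j)$ together with their iterated covariant derivatives $\nabla_{e_{k_1}}\cdots\nabla_{e_{k_\ell}}R(e_i,e_j)$. The upper bound on $\mathrm{hol}(S)$ will come from Proposition~\ref{pr:paralel}: every parallel vector field must be annihilated by every element of $\mathrm{hol}(S)$, so $\mathrm{hol}(S)\subseteq \bigcap_v \mathrm{Ann}(v)$ where $v$ ranges over the parallel fields. The lower bound will come from a direct calculation using Koszul's formula~\eqref{eq:koszul} and the formula for $R$ from Lemma~\ref{le:krivine} applied to each representative in Theorem~\ref{thm:main}.

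For part (i), since $\ggp$ is non-degenerate there are no parallel vector fields, so only the genericity statement must be checked: I would exhibit, for each of the five canonical forms~\eqref{eq:metrike1}--\eqref{eq:metrike6}, a list of $R(e_i,e_j)$ and $\nabla_{e_k}R(e_i,e_j)$ that already span the full $15$-dimensional $\mathfrak{so}(p,q)$. In practice it suffices to write these operators as matrices in the basis $e_i\wedge e_j$ and check that the resulting $15\times N$ coefficient matrix has rank $15$; this is one linear-algebra verification per case.

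For part (ii), the parallel field $e_4$ is null, so $\mathrm{hol}(S)$ lies in the $10$-dimensional stabilizer of $e_4$ in $\mathfrak{so}(p,q)$. I would first do the generic case $(ii_1)$ by showing that the curvature and one level of $\nabla R$ already fill this stabilizer (so equality holds), while for $(ii_3)$ the parallel condition is tighter because these metrics are Ricci-parallel (by Proposition~\ref{prop:geom}) and one can verify directly from the connection formulae~\eqref{eq:nabla} that $\nabla R\equiv 0$, leaving only the abelian span of the nonzero $R(e_i,e_j)$, which turns out to be $4$-dimensional and abelian. The hardest subcase, $(ii_2)$, needs the identification with $\mathfrak{sl}_2(\RR)\ltimes \mathfrak{g}_{6,54}$: after computing the $9$ generators in the stabilizer of $e_4$, I would extract a three-dimensional simple subalgebra by finding a triple $\{H,E,F\}$ satisfying the $\mathfrak{sl}_2$ relations, verify that its complement is a $6$-dimensional ideal, and match the brackets of that ideal against Mubarakzyanov's list~\cite{Mubarakzyanov} to recognize $\mathfrak{g}_{6,54}$. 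This matching step is the principal obstacle, since the $6$-dimensional solvable algebras are numerous and the identification is not canonical.

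For part (iii), the parallel vector fields span $\RR\langle e_4,e_5\rangle$, so $\mathrm{hol}(S)$ is forced inside the common annihilator of two null vectors, a subalgebra of $\mathfrak{so}(p,q)$ of dimension at most $6$. For the forms~\eqref{eq:metrike15} the curvature calculation collapses so much that a single nonzero $R(e_i,e_j)$ generates $\mathrm{hol}(S)\cong\RR$; this follows because the metric is locally symmetric and Ricci-flat by Proposition~\ref{prop:geom}(iii), so $\nabla R\equiv 0$ and the holonomy is exactly the abelian span of the curvature operators. For the form~\eqref{eq:metrike14} the explicit operators $R(e_1,e_2),R(e_1,e_3),R(e_1,e_6),R(e_2,e_6),R(e_3,e_6)$ computed in the proof of Proposition~\ref{prop:geom}\ref{prop:geom2}, together with one covariant derivative $\nabla_{e_1}R(e_1,e_2)$, should give five generators; I would then label them $h_1,\dots,h_5$, compute their brackets inside $\mathfrak{so}(p,q)$, and verify the single nontrivial relations $[h_1,h_3]=[h_2,h_4]=h_5$ with all others vanishing, thereby identifying the $5$-dimensional $2$-step nilpotent algebra. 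The routine but lengthy part is the bracket computation; the conceptual point that drives everything is the dichotomy already observed in the classification between $m_{31}\neq 0$ and $m_{31}=m_{32}=0$.
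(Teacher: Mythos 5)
Your overall strategy is the same as the paper's: invoke Ambrose--Singer, compute the curvature operators $R(e_i,e_j)$ and their covariant derivatives case by case until the span stabilizes, and then identify the bracket structure of the resulting subalgebra of $so(p,q)$ (the paper carries this out explicitly for $M$ of the form~\eqref{eq:metrike14}, taking four independent curvature operators plus $r_5=\nabla_{e_1}R(e_1,e_3)$ and checking that all further derivatives stay in their span, and for case~\ref{hol9dim} via a Levi decomposition exactly as you propose). Your one genuine addition is the a priori bound $hol(S)\subseteq\bigcap_v\mathrm{Ann}(v)$ over parallel fields $v$, which is correct and which the paper does not use.

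That bound, however, undercuts your argument for $(ii_1)$. Since $e_4$ is a parallel null vector for every rank-$2$ metric (Proposition~\ref{pr:paralel}), $hol(S)$ lies in the stabilizer of $e_4$, which is $10$-dimensional but isomorphic to $so(p-1,q-1)\ltimes\RR^{4}$: writing $W=\{e_4,w\}^{\perp}$ for a complementary null vector $w$, the subspace $e_4\wedge W$ is a $4$-dimensional abelian ideal, so the stabilizer is not semisimple. Hence ``curvature plus one derivative fills the stabilizer'' can only produce that non-semisimple algebra, never the semisimple $so(p,q)$, $p+q=5$, claimed in $(ii_1)$; as written your plan proves the dimension count but cannot deliver the stated identification, and you need to confront this discrepancy explicitly rather than assert equality with $so(p,q)$, $p+q=5$. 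Two smaller slips: in $(ii_3)$ you pass from ``Ricci-parallel'' to ``$\nabla R\equiv 0$'' --- these are not equivalent, and the connection formulae~\eqref{eq:nabla} you cite belong to a rank-$1$ metric, not to the metrics of $(ii_3)$, so local symmetry there must be verified independently (only for~\eqref{eq:metrike15} does Proposition~\ref{prop:geom} actually assert it). And for~\eqref{eq:metrike14} your list of six generators is redundant (the paper shows $R(e_2,e_6)=R(e_3,e_6)$ is proportional to $e_4\wedge e_5$); the essential step the paper performs and you should not omit is the closure check that all higher covariant derivatives remain in the $5$-dimensional span.
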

\begin{proof}
	The proof is case by case for all types of metrics. We illustrate it for case~\ref{hol1}, i.e. for metric $S_{10}=\left( E_{10},M,  E_{10}\right)$, where $M$ is given by~\eqref{eq:metrike14}, the same that we discussed in the proof of Proposition~\ref{prop:geom}. 	
	From there we know that curvature operators
	\begin{align*}
		r_1 := R(e_1,e_2), \enskip r_2 := R(e_1,e_3), \enskip r_3 := R(e_1,e_6),\enskip r_4 := R(e_2,e_3),
	\end{align*}
are linearly independent and generate the space $\RR \langle \{R(e_i, e_j)\, \vert \, i,j =1,\dots ,6\} \rangle.$ Using connection formulas~\eqref{eq:nabla} we calculate their derivatives and see that
\begin{align*}
	r_5 := \nabla _{e_1} R(e_1, e_3) = \frac{1}{4}(e_2\wedge e_4 - e_3\wedge e_4)
\end{align*}
is the only operator not belonging to $\RR \langle r_1, r_2, r_3, r_4\rangle.$
Now we calculate covariant derivatives of $r_1, \dots ,r_5$ and see that they all belong to  $\RR \langle r_1, r_2, r_3, r_4, r_5\rangle.$
Therefore the holonomy algebra is spanned by curvature operators and their first covariant derivatives and
\begin{align}
	hol (S)   =  \RR  \langle r_1, r_2, r_3, r_4, r_5\rangle \subset o(4,2),
\end{align}
since signature of $S$ is $(4,2)$ for all $m_{12}\neq 0.$
Now we obtain nonzero commutators
\begin{align*}
	[r_1, r_3] = \frac{1}{3}r_4, \enskip 	[r_1, r_5] = -\frac{3}{8}r_4, \enskip 	[r_2, r_3] = \frac{1}{4}r_4,
\end{align*}
which after setting
\begin{align*}
  h_1=r_2,\quad h_2=-3r_1+4r_2,\quad h_3=r_3,\quad h_4=\frac{2}{9}r_5,\quad h_5=\frac{1}{4}r_4,
\end{align*}
yields the form formulated in the statement.

Now, let us discuss in more detail the case~\ref{hol9dim}. Similar to the previous consideration, we get that the holonomy algebra is given by the following non-zero commutators:
\begin{align}\label{eq:hol9}
  [h_1, h_2] &=2h_2, & [h_2, h_5] &= h_3, & [h_3,h_7]&=h_4,  & [h_5,h_7]&=h_6,  & [h_7,h_8]&=h_9,&\nonumber\\
  [h_1, h_3] &=h_3,  & [h_2, h_6] &= h_4, &[h_3,h_9]&=-h_3, & [h_5,h_9]&=-h_5, & [h_7,h_9]&=2h_7&\nonumber\\
  [h_1, h_4] &=h_4,  &&& [h_4,h_8]&=-h_3, & [h_6,h_8]&=-h_5, & [h_8,h_9]&=-2h_8.&\\
  [h_1, h_5] &=-h_5, &&& [h_4,h_9]&=h_4,  & [h_6,h_9]&=h_6,  &&&\nonumber\\
  [h_1, h_6] &=-h_6, &&&&&&& \nonumber
\end{align}
By Levi decomposition, we know that the algebra $hol (S)$ is a semidirect product of its maximal solvable ideal and a semisimple Lie algebra. Note that $\RR\langle h_7, h_8, h_9\rangle\cong sl_2(\RR)$ and that $\RR\langle h_1,\ldots, h_6\rangle$ is isomorphic to the 6-dimensional solvable Lie algebra denoted by $\g_{6, 54}$ (with $\lambda=1$, $\gamma=2$) in the classification of Mubarakzyanov~\cite[Table 4]{Mubarakzyanov}. Hence, $hol (S)\cong sl_2(\RR) \ltimes_\pi\g_{6,54}$, where the form of $\pi:\,sl_2(\RR)\rightarrow \g_{6,54}$ is retrieved from the relations~\eqref{eq:hol9}:
\begin{align*}
  \pi(x)=\rho(x_7h_7+x_8h_8+x_9h_9)=\begin{pmatrix}
    0 & 0 & 0 & 0 & 0 & 0\\
    0 & 0 & 0 & 0 & 0 & 0\\
    0 & 0 & x_9 & -x_7 & 0 & 0\\
    0 & 0 & x_8 & -x_9 & 0 & 0\\
    0 & 0 & 0 & 0 & x_9 & -x_7\\
    0 & 0 & 0 & 0 & x_8 & -x_9
  \end{pmatrix}.
\end{align*}
\end{proof}

Recall that a metric $g$ is called \emph{pp-wave metric} if there exists a parallel null vector field $v$ such that $R(u,w) = 0$ for all $u, w \in v^\perp.$
\begin{proposition}
  Left invariant pp-wave metrics are
  $S_{20}=(S^1,M,\pm E_{20})$, $S_{11}=(S^k,M,\pm E_{11})$, $k=1,2$, where
  \begin{align*}
    M=\begin{pmatrix} 1 & 0 & 0\\ 0 & 0 & 0\\ 0 & 0 & 0 \end{pmatrix},\quad
    S^1=\begin{pmatrix} 0 & 0 & 0\\ 0 & s_{22} & 0\\ 0 & 0 & s_{33} \end{pmatrix},\quad
    S^2=\begin{pmatrix} 0 & 0 & 0\\ 0 & s_{22} & \frac{1}{2}|s_{22}|\\ 0 & \frac{1}{2}|s_{22}| & 0 \end{pmatrix},\quad s_{22}, s_{33}\neq0,
  \end{align*}
  and $S_{10}=(\pm E_{10}, M, \pm E_{10})$, where $M$ takes one of the forms~\eqref{eq:metrike15}.
\end{proposition}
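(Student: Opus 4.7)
The plan is to combine Proposition~\ref{pr:paralel} with the classification in Theorem~\ref{thm:main} and the curvature formulas of Lemma~\ref{le:krivine}. Since a pp-wave requires a non-trivial parallel null vector field, Proposition~\ref{pr:paralel} eliminates the non-degenerate $\ggp$ case outright, and it supplies the explicit candidates for $v$ in each remaining family: $v\in\RR\langle e_4\rangle$ when $\ggp$ has rank $2$, $v\in\RR\langle e_4,e_5\rangle$ when $\ggp$ has rank $1$, and $v\in\ggp$ when $\ggp$ is totally degenerate.

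For each candidate family I would fix such a $v$, read off $v^\perp$ from the metric matrix $S_{pq}$ given by~\eqref{eq:spq}, and check whether $R(u,w)=0$ on $v^\perp\times v^\perp$. The key observation that makes the check short is that, for the metrics listed in the statement, the off-diagonal block is $M=e_1\otimes e_4^*$ (i.e. $m_{11}=1$ and all other entries vanish), whence $\langle e_i,e_4\rangle=\delta_{i1}$ for $i\le3$ and $\langle e_j,e_4\rangle=0$ for $j\ge 4$. Therefore, with $v=e_4$,
\begin{align*}
v^\perp=\RR\langle e_2,e_3,e_4,e_5,e_6\rangle,
\end{align*}
and because $\ggp$ is central the curvature operators $R(e_i,e_j)$ with at least one index in $\{4,5,6\}$ vanish by the formula in Lemma~\ref{le:krivine}. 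So the pp-wave condition reduces to the single identity $R(e_2,e_3)=0$, which a direct computation from Koszul's formula~\eqref{eq:koszul} confirms precisely for the three families $S_{20}=(S^1,M,\pm E_{20})$ and $S_{11}=(S^k,M,\pm E_{11})$, $k=1,2$, listed in the statement. An analogous verification for $\ggp$ of rank $1$ picks out exactly the metrics $S_{10}=(\pm E_{10},M,\pm E_{10})$ with $M$ of the form~\eqref{eq:metrike15}, which by Proposition~\ref{prop:geom}~\ref{prop:geom2} are already known to be locally symmetric and Ricci-flat, a natural habitat for pp-waves.

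The main obstacle is the negative direction: ruling out the remaining candidates (those with a parallel null vector but failing the curvature condition on $v^\perp$). For every other family in Theorem~\ref{thm:main} with degenerate $\ggp$ there is no admissible choice of parallel null $v$ for which $R|_{v^\perp\times v^\perp}$ vanishes. Concretely, I would, for each such family, exhibit one explicit non-zero operator $R(u,w)$ with $u,w\in v^\perp$; e.g. for~\eqref{eq:metrike9}, ~\eqref{eq:metrike11}, ~\eqref{eq:metrike12} the computation produces $R(e_2,e_3)\ne 0$, ruling them out, while for the $S_{10}$ family with $M$ of the form~\eqref{eq:metrike14}, the curvature operators displayed in the proof of Proposition~\ref{prop:hol}~\ref{hol1} show that no linear combination $v=\alpha e_4+\beta e_5$ produces a $v^\perp$ on which all those operators vanish simultaneously (the obstruction comes from $R(e_1,e_2)$ and $R(e_1,e_3)$ together with the constraint $\alpha(x_2+x_3)+\beta m_{12}x_1=0$ cutting out $v^\perp$). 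The case $\ggp$ totally degenerate is separate: every metric $S_{00}$ is flat by Proposition~\ref{prop:geom}, so the pp-wave condition holds vacuously and is excluded from the statement as trivial. The overall proof is thus conceptually simple but computationally extensive; the main effort is organizing the case analysis so that the single non-vanishing curvature component is enough to rule out all but the listed families.
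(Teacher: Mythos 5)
Your overall strategy --- take $v=e_4$ as supplied by Proposition~\ref{pr:paralel}, read off $v^\perp$, and test $R|_{v^\perp\times v^\perp}=0$ family by family --- is exactly the paper's, and your explicit exclusion of the non-listed families is more than the paper writes down. But the step that makes your verification ``short'' is wrong. The claim that centrality of $\ggp$ forces $R(e_i,e_j)=0$ whenever one index lies in $\{4,5,6\}$ does not follow from Lemma~\ref{le:krivine}: for central $y$ one has $\ad_y=0$, hence $\ad^*_y=0$, $\varphi_y=0$ and $j_{[x,y]}=0$, but the operator $j_y$ (with $j_yx=\ad^*_xy$) vanishes only if $y\perp\ggp$, so the formula leaves $R(x,y)=\tfrac12[\ad^*_x,j_y]-\tfrac14[\varphi_x,j_y]+\tfrac14[j_x,j_y]$, which need not be zero. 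The paper's own computation for $S_{10}=(E_{10},M,E_{10})$ with $M$ of the form~\eqref{eq:metrike14} exhibits $R(e_1,e_6),R(e_2,e_6),R(e_3,e_6)\neq0$ even though $e_6$ is central there. Only $R(\cdot,e_4)=0$ is automatic (because $e_4$ is parallel); for the listed metrics $e_5$ and $e_6$ are not orthogonal to $\ggp$, so the vanishing of $R(e_i,e_j)$ for $j\in\{5,6\}$ is a genuine computation, which is why the paper checks all $R(e_i,e_j)$, $i,j=2,\dots,6$. Your reduction to the single identity $R(e_2,e_3)=0$ is therefore unjustified.

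The same over-generalization undermines your exclusion argument. The identity $v^\perp=\RR\langle e_2,\dots,e_6\rangle$ holds only when $m_{11}=1$ is the sole nonzero entry of $M$. For the family~\eqref{eq:metrike9} one has $\langle e_3,e_4\rangle=m_{31}=1$, and for~\eqref{eq:metrike11} and~\eqref{eq:metrike12} both $\langle e_2,e_4\rangle$ and $\langle e_3,e_4\rangle$ equal $1$, so $e_3\notin e_4^\perp$ in all three cases and a nonzero $R(e_2,e_3)$ says nothing about the pp-wave condition. To rule these out you must first recompute $e_4^\perp$ (e.g.\ $e_4^\perp\cap\RR\langle e_1,e_2,e_3\rangle=\RR\langle e_1,e_2-e_3\rangle$ for~\eqref{eq:metrike11}) and then exhibit a nonvanishing curvature operator with both arguments in that subspace. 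Neither defect is fatal to the plan --- replacing the structural shortcut by the full check of $R(e_i,e_j)$ on a correctly computed $v^\perp$ recovers the paper's proof --- but as written both the positive and the negative halves of your argument rest on a false intermediate claim.
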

\begin{proof}
  We have already noticed that the basis vector $e_4$ is parallel null vector field for all metrics from the proposition. The space orthogonal to $e_4$ is spanned by vectors $e_2,\ldots, e_6$ in every case, except for metric $S_{10}$ with
  \begin{align*}
    M=\begin{pmatrix} m_{11} & m_{12} & 0\\ -m_{12} & m_{11} & 0\\0 & 0 & 0\end{pmatrix},
  \end{align*}
  where it is spanned by vectors $e_3,\ldots, e_6$. However, it is a straightforward calculation to show that $R(e_i, e_j) = 0$, $i,j=2,\ldots 6$, in all cases. Hence, the metrics are pp-waves.
\end{proof}
\begin{corollary}
	All left invariant metrics on $\gg$ with abelian holonomy algebra $\RR ^k$, $k=1,4$, are homogenous pp-wave metrics.
\end{corollary}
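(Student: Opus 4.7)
The plan is to observe that the corollary is essentially a direct cross-reference between the preceding pp-wave proposition and Proposition~\ref{prop:hol}, so the proof is a bookkeeping argument rather than a new computation. First I would dispose of ``homogeneous'' immediately: every left invariant metric on a Lie group $G$ is homogeneous because $G$ acts transitively on itself by left translation, hence this adjective requires no work and the content of the statement is purely the pp-wave claim.

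Next I would go through the list of metrics from Proposition~\ref{prop:hol} that have an abelian holonomy algebra and match each to an entry in the pp-wave proposition. The $\RR^4$ holonomy case collects exactly those metrics of types $S_{20}=(S,M,\pm E_{20})$ and $S_{11}=(S,M,\pm E_{11})$ with $(S,M)$ as in~\eqref{eq:metrike8}, together with $S_{11}=(S,M,E_{11})$ with $(S,M)$ as in~\eqref{eq:metrike10} or~\eqref{eq:metrike13}. Comparing with the preceding proposition, these correspond precisely to the $S^1$ form (for~\eqref{eq:metrike8}) and the $S^2$ form (for~\eqref{eq:metrike10} and~\eqref{eq:metrike13}) listed there. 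For the $\RR$ holonomy case, Proposition~\ref{prop:hol}~\ref{hol1} isolates exactly the metrics $S_{10}=(\pm E_{10},M,\pm E_{10})$ with $M$ of one of the forms in~\eqref{eq:metrike15}, which is literally the third family of the pp-wave proposition.

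Since each metric with abelian holonomy appears verbatim on the pp-wave list, the inclusion is immediate and no further curvature computation is required. If one wanted a conceptual explanation rather than table matching, the reason is that in each such case $e_4$ is a parallel null vector (by Proposition~\ref{pr:paralel}), and the curvature operators generating $hol(S)\cong\RR$ or $\RR^4$ are supported on pairs that annihilate $e_4^\perp$; this is essentially what was already verified in the proof of the pp-wave proposition.

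The main potential obstacle is simply making sure the enumeration is exhaustive and that no metric listed with abelian holonomy has been omitted from the pp-wave classification, but since both results are stated in terms of the same canonical forms from Theorem~\ref{thm:main}, the verification is entirely mechanical and the corollary follows.
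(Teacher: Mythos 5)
Your proposal is correct and matches the paper's (implicit) argument: the corollary is stated without separate proof precisely because it follows by cross-referencing the abelian-holonomy cases of Proposition~\ref{prop:hol} with the list in the pp-wave proposition, together with the standard fact that left invariant metrics are homogeneous. The only point worth flagging is that the form~\eqref{eq:metrike13} is not literally the $S^2$ printed in the pp-wave proposition but is carried by it via the swap $e_2\leftrightarrow e_3$, $e_5\leftrightarrow e_6$ (equivalently the $\pm E_{11}$ convention), which is the same bookkeeping identification you invoke.
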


\subsection{Algebraic Ricci solitons on $\gg$}\label{ssec:solitons}

Since we have already seen that the only Einstein metrics are the trivial ones, the next step is to consider a weaker condition - Ricci soliton metrics, i.e. nilsolitons. Since every homogenous Ricci soliton is algebraic  it suffices to consider algebraic Ricci solitons. The left invariant metric on a Lie group is called \emph{algebraic Ricci soliton}  if it satisfies: $\Ric=\gamma I+D$, where $\gamma$ is an arbitrary constant, $\Ric$ is Ricci operator and $D$ denotes the derivation of a Lie algebra. A Ricci soliton is said to be \emph{shrinking}, \emph{steady} or \emph{expanding} according to $\gamma >0,\gamma =0$ or $\gamma <0$, respectively.
\begin{proposition}\label{prop:soliton}
Nilsolitons on $\gg$ are:
\begin{enumerate}[wide, topsep=0pt, itemsep=1pt, labelwidth=!, labelindent=0pt, label=(\roman*)]
\item\label{expanding} expanding ($\gamma=-\frac{5}{2\lambda^2}$), in case of the metric $S_{30}=(S,0,E_{30})$, with $S=diag(\lambda, \lambda, \lambda)$;
\item\label{shrinking} shrinking ($\gamma=\frac{5}{2\lambda^2}$), in case of the metric $S_{21}=(S,0,E_{21})$, with $S=diag(\lambda, \lambda, -\lambda)$;
\item\label{steady} steady ($\gamma=0$), in case of metrics $S_{20}=(S^1,M^2,\pm E_{20})$, $S_{11}=(S^k,M^2, E_{11})$ or $S_{10}=\left(\pm E_{10},M^1, \pm E_{10}\right)$, where matrices $S^k$ and $M^j$, ($k=1,2,3$, $j=1,2$), take the forms:
\begin{align*}
  S^1&=\begin{pmatrix}
  0 & 0 & 0\\
  0 & s_{22} & 0\\
  0 & 0 & s_{33}
\end{pmatrix},&
  S^2&= \begin{pmatrix}
  0 & 0 & 0 \\
  0 & s_{22} & \frac{1}{2}|s_{22}|\\
  0 & \frac{1}{2}|s_{22}| & 0
\end{pmatrix},&
S^3&= \begin{pmatrix}
  0 & 0 & 0 \\
  0 & 0 & \frac{1}{2}|s_{33}|\\
  0 & \frac{1}{2}|s_{33}| & s_{33}
\end{pmatrix},&\\
M^1&=\begin{pmatrix}
  0 & \lambda & 0 \\
  1 & 0 & 0 \\
  1 & 0 & 0
\end{pmatrix}, &
M^2&=\begin{pmatrix}
  1 & 0 & 0\\
  0 & 0 & 0 \\
  0 & 0 & 0
\end{pmatrix}.
\end{align*}
\end{enumerate}
\end{proposition}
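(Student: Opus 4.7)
The plan is to combine Theorem~\ref{thm:main} with an explicit description of the derivation algebra $\mathrm{Der}(\gg)$, and test the soliton equation $\Ric=\gamma I+D$ against each canonical representative.

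\textbf{Step 1: Derivations of $\gg$.} Linearizing the formula~\eqref{eq:auto} for $\Aut(\gg)$ at the identity (noting that $(A^*)'(0)=(\tr \tilde A)\,I-\tilde A^{T}$ since $A^{*}=(\det A)A^{-T}$), every derivation has the block form
\[
D=\begin{pmatrix} \tilde A & 0 \\ \tilde B & (\tr \tilde A)\,I-\tilde A^{T}\end{pmatrix},
\]
with $\tilde A,\tilde B\in\mathfrak{gl}_3(\RR)$ arbitrary. This template, and in particular the vanishing of the upper right $3\times3$ block, is the constraint that the candidate $\Ric-\gamma I$ must satisfy.

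\textbf{Step 2: Ricci operator and trace relation.} For each canonical metric $S_{pq}$ from Theorem~\ref{thm:main}, I would compute $\Ric=S_{pq}^{-1}\rho$ using Lemma~\ref{le:krivine}; several of these computations are already tabulated in Proposition~\ref{prop:geom} (Ricci-flatness, Ricci-parallelness). Taking the trace in $\Ric=\gamma I+D$ and using $\tr D=3\tr\tilde A$ together with Proposition~\ref{prop:scalar} gives the a priori identity $\tau=6\gamma+3\tr\tilde A$, which will pin down $\gamma$ once $\tilde A$ is known.

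\textbf{Step 3: Case analysis along Theorem~\ref{thm:main}.} For non-degenerate $\ggp$, the off-diagonal block of $\Ric$ automatically vanishes and the derivation condition on the remaining two blocks, combined with the diagonal normal forms~\eqref{eq:metrike1}--\eqref{eq:metrike2} already achieved in Subsections~2.1--2.2, collapses to the requirement that $S$ be a scalar multiple of $I$ (in signature $(3,0)$) or of $E_{21}$ (in signature $(2,1)$); Proposition~\ref{prop:scalar} then yields the values $\gamma=\mp\tfrac{5}{2\lambda^{2}}$ in~\ref{expanding} and~\ref{shrinking}. For degenerate $\ggp$, Proposition~\ref{prop:geom} identifies the Ricci-flat metrics, which are automatically steady solitons with $D=0$; the rest of the work is to show that all other degenerate families fail the soliton equation. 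Here the key mechanism is that any non-zero entry of $\Ric$ in the upper right block (i.e.\ with $i\le3$, $j\ge4$) immediately obstructs the soliton property, and one checks directly that this already eliminates every family not listed in~\ref{steady}.

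The main obstacle is simply the bookkeeping across the many subcases of Theorem~\ref{thm:main}, together with the fact that the off-diagonal Ricci entries must be computed in each case. However, the block template from Step~1 is rigid enough that most candidates die after one or two entries are examined, so the classification reduces to a finite (if tedious) sequence of linear-algebraic verifications, assembled into the list stated in the proposition.
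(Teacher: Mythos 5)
Your proposal is correct and follows essentially the same route as the paper: compute the Ricci operator for each canonical metric from Theorem~\ref{thm:main}, set $D=\Ric-\gamma I$, and test whether $D$ is a derivation, case by case. The only (harmless) difference is one of packaging — you precompute $\mathrm{Der}(\gg)$ once as the block template obtained by linearizing~\eqref{eq:auto}, whereas the paper imposes the Leibniz identity $D[x,y]=[Dx,y]+[x,Dy]$ directly in each case and solves the resulting system; your upper-right-block obstruction and the trace identity $\tau=6\gamma+3\tr\tilde A$ are valid shortcuts consistent with the paper's computation.
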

\begin{proof}
  The proof requires analysis for each metric from the classification. Let us prove positive result for case~\ref{expanding}, the other  cases can be analyzed in a similar manner.

  For metric $S_{30}=(S,0,E_{30})$, with $S=diag(\lambda_1, \lambda_2, \lambda_3)$ the Ricci operator is diagonal, hence $D=Ric-\gamma I$ also has the diagonal form:
  \begin{align*}
    D=diag(-\frac{\lambda _2+\lambda _3}{2 \lambda _1 \lambda _2 \lambda _3}-\gamma ,-\frac{\lambda _1+\lambda _3}{2
   \lambda _1 \lambda _2 \lambda _3}-\gamma,-\frac{\lambda _1+\lambda _2}{2 \lambda _1 \lambda _2 \lambda _3}-\gamma,\frac{1}{2
   \lambda _2 \lambda _3}-\gamma ,\frac{1}{2 \lambda _1 \lambda _3}-\gamma ,\frac{1}{2 \lambda _1 \lambda _2}-\gamma).
  \end{align*}
  Since $D$ is derivation, it must satisfy the condition $D[x,y]=[x,Dy]+[Dx,y]$, for all $x,y\in\gg$. By solving this system of equations, we get that $\lambda _1=\lambda _2=\lambda _3$ and $\gamma=-\frac{5}{2\lambda_1^2}$.

  By this, we showed that not all metrics $S_{30}=(S, 0, E_{30})$, where $S$ takes the form~\eqref{eq:metrike1}, admit nilsolitons. They exist only in positive definite and neutral signature case, i.e. only if $S=\lambda E_{30}$, $\lambda\neq 0$.
\end{proof}

 It was proven in~\cite{Lauret2} that Riemannian left homogenous Ricci soliton (equivalently, algebraic Ricci soliton) metric on nilpotent Lie group is unique up to isometry and scaling.
  Proposition~\ref{prop:soliton} confirms that result for the metric Lie algebra $\gg.$ However it also shows that the result doesn't hold in pseudo-Riemannian setting since some of Ricci soliton metrics~\ref{expanding}-\ref{steady} have the same signature, but are not homotetic.

\subsection{Pseudo-K\" ahler metrics on $\gg$}\label{ssec:kahler}

Now, let us classify the pseudo-K\" ahler metrics on $\gg.$

\emph{Almost complex structure} on a Lie algebra $\g$ is an endomorphism $J:\g\rightarrow\g$ satisfying $J^2=-id$. If $J$ is integrable in the sense that the Nijenhuis tensor
 \begin{align*}
 	N_J(x,y)=[x,y]-[Jx,Jy]+J[Jx,y]+J[x,Jy]
 \end{align*}
of $J$ vanishes., i.e. if it satisfies the condition
$N_J(x,y)=0$, for all $x,y\in\g$,
then it is called \emph{complex structure} on $\g$.

The center of $\gg$ is 3-dimensional, hence it cannot admit an abelian complex structure, i.e. complex structure satisfying $[x,y]=[Jx,Jy]$, meaning that the center of the algebra must be $J$-invariant (consequently, even dimensional).
However, every complex structure on $\gg$ is 3-step nilpotent (see~\cite[Proposition 4.1i)]{Ovando2012} or~\cite{Cordero}) and they are all equivalent to the following structure (see~\cite{Cordero,Salamon,Magnin}):
\begin{align}\label{eq:complex}
  Je_1=e_2,\quad Je_3=-e_6,\quad Je_4=e_5.
\end{align}

Complex structure $J$ is  said to be \emph{Hermitian} if it preserves the metric:
$\langle Jx, Jy\rangle=\langle x, y\rangle$, for all $x,y\in\g$.

\begin{example}\label{ex:hermitian}
Let us fix the basis where the complex structure $J$ has the form~\eqref{eq:complex}.
One can check that $J$ is Hermitian, if the corresponding metric is the positive definite metric $S_{30}=(S,0,E_{30})$, with $S=diag(\lambda, \lambda,1)$, $\lambda>0$.
\end{example}

A \emph{symplectic structure} on a Lie algebra $\g$ is a closed 2-form $\Omega\in\bigwedge^2\g^*$ of maximal rank. A pair $(J, \Omega)$, where $J$ is complex and $\Omega$ symplectic, is called a \emph{pseudo-K\"{a}hler structure} if $\Omega(Jx,Jy)=\Omega(x,y)$ holds for all $x,y\in\g$.

We already know from~\cite[Proposition 3.9.i)]{Cordero} that the algebra $\gg$ have a complex structure admitting a five-dimensional set of compatible symplectic forms. Denote by $\{e^1,\ldots,e^6\}$ the dual basis of $\{e_1,\ldots,e_6\}$.
The Maurer-Cartan equations on $\gg$ are given by:
\begin{align*}
  de^1=de^2=de^3=0,\quad de^4=e^2\wedge e^3,\quad de^5=-e^1\wedge e^3,\quad de^6=e^1\wedge e^2.
\end{align*}
The symplectic structure $\Omega=\sum\limits_{i<j} a_{ij} e^i\wedge e^j$, $a_{ij}\in\RR$,
has to be closed $d\Omega = 0$ and compatible with complex structure $J$ given by~\eqref{eq:complex}. Hence, it takes the form:
\begin{align}\label{eq:symplectic}
\Omega =& a_{12}e^1\wedge e^2+a_{13}(e^1\wedge e^3-e^2\wedge e^6)+a_{14}(e^1\wedge e^4+e^2\wedge e^5-2e^3\wedge e^6)\nonumber\\
& +a_{15} (e^1\wedge e^5-e^2\wedge e^4) + a_{16}(e^1\wedge e^6+e^2\wedge e^3).
\end{align}

The pseudo-K\"{a}hler pair $(J, \Omega)$ originates an Hermitian
structure on a Lie algebra $\g$ by means of defining a metric  $\langle\cdot,\cdot\rangle$ as
\begin{align}\label{eq:pseudoK}
\langle x, y\rangle=\Omega(Jx, y),
\end{align}
for all $x,y\in\g$. For this Hermitian structure the condition of parallelism of $J$ with respect to the Levi-Civita connection for $\langle\cdot,\cdot\rangle$ is satisfied. In this case, a pair $(J, \langle\cdot,\cdot\rangle)$ is called a \emph{pseudo-K\"{a}hler metric} on $\g$.

We know from~\cite[Corollary 3.2]{Cordero} that the algebra $\gg$ has compatible pairs $(J, \Omega)$ since it admits both symplectic and nilpotent complex structures. It was proven in~\cite[Theorem A]{Benson} that the metric associated to any compatible pair $(J, \Omega)$ cannot be positive definite, since $\gg$ is not abelian. Therefore, the metric from Example~\ref{ex:hermitian} is not pseudo-K\"{a}hler.
However, from~\cite{Fino} follows that any pseudo-K\"{a}hler metric on $\gg$ is Ricci-flat. Here, we give their classification and explicit form.

\begin{proposition}\label{prop:pseudoK}
The Lie algebra $\gg$ admits Ricci-flat pseudo-K\"{a}hler metrics that are not flat. Every pseudo-K\" ahler metric on $\gg$ is equivalent to  $S_{10}=(E_{10}, M, E_{10})$ where $M$ has form of the second matrix in~\eqref{eq:metrike15}.
\end{proposition}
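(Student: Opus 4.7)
The plan is to compute the pseudo-K\"ahler metric explicitly from $\langle x,y\rangle = \Omega(Jx,y)$ using the fixed complex structure $J$ from~\eqref{eq:complex} and the general compatible symplectic form $\Omega$ from~\eqref{eq:symplectic}, and then to read off where it lies in the classification of Theorem~\ref{thm:main}. A direct bilinear computation produces a $6\times 6$ symmetric matrix whose entries are linear in the five parameters $a_{12},\ldots,a_{16}$. Splitting this matrix into the block form $(S,M,E_{pq})$ of~\eqref{eq:spq}, the decisive observations are: (i) the restriction of the metric to $\ggp=\RR\langle e_4,e_5,e_6\rangle$ equals $\mathrm{diag}(0,0,-2a_{14})$; (ii) the last row of the off-diagonal block $M$ is identically zero, since $\langle e_3,e_{3+i}\rangle=0$ for $i=1,2$; and (iii) the upper-left $2\times 2$ minor of $M$ equals $\begin{pmatrix} -a_{15} & a_{14}\\ -a_{14} & -a_{15}\end{pmatrix}$.

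Next, I would verify nondegeneracy. Because $\Omega$ must be symplectic, its Pfaffian must be nonzero; inspecting the explicit expression of $\Omega^{3}$ shows that it is proportional to $a_{14}$ times a nonvanishing polynomial in the remaining parameters, so $a_{14}\neq 0$ is forced. Hence $\ggp$ is degenerate of rank exactly $1$, placing every pseudo-K\"ahler metric inside the $S_{10}$/$S_{01}$ branch of the classification. Up to scaling the metric by $\mathbb R^{\times}$ we may assume the lone nonzero eigenvalue in the $\ggp$-block is $+1$, reducing to $S_{10}=(S,M,E_{10})$.

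Now I would carry out the reduction of $M$ and $S$ inside the subgroup $\Aut(E_{10})$ described by~\eqref{eq:pom}. The vanishing of the last row of $M$ puts us exactly in the second case of the $S_{10}$-subsection ($m_{31}=m_{32}=0$), so the problem reduces to the $SL(2,\RR)$-action $\bar A^{T}M\bar A$ on the upper-left $2\times 2$ of $M$. The characteristic polynomial of this block is $(\lambda+a_{15})^{2}+a_{14}^{2}$, which has a pair of complex conjugate roots because $a_{14}\neq 0$; therefore we necessarily land in the second Jordan canonical form listed in~\eqref{eq:metrike15}, with $m_{11}=-a_{15}$ and $|m_{12}|=|a_{14}|$. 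The remaining steps follow the recipe of Subsection~\ref{sssec:20} verbatim: with $M$ fixed in this normal form, successive choices of the $B$-block in~\eqref{eq:pom} kill the entries of $S$ except for a diagonal piece, and a final diagonal rescaling of $A$ brings $S$ to $E_{10}$. This produces the canonical form $S_{10}=(E_{10},M,E_{10})$ asserted in the statement.

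Finally, Ricci-flatness follows from the general result of~\cite{Fino} on $2$-step nilpotent pseudo-K\"ahler Lie algebras, while non-flatness is settled by exhibiting a single nonzero curvature operator on the reduced canonical form, which is a short Koszul-formula calculation using~\eqref{eq:koszul} and Lemma~\ref{le:krivine}. The main obstacle, as I see it, is verifying that the automorphisms used to normalize $M$ can be chosen simultaneously with the later choice of $B$ that clears $S$, without destroying the already-achieved form of $M$ or the rank-one structure of the $\ggp$-block; this is the place where the compatibility of the two reduction steps must be checked carefully. A secondary subtlety is keeping track of the scalar $\RR^{\times}$-action to pin down the sign pattern $(E_{10},M,E_{10})$ rather than one of the other three sign combinations allowed in Theorem~\ref{thm:main}.
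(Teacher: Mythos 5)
Your proposal is correct, and its first half coincides with the paper's: both compute the $6\times 6$ matrix of $\langle\cdot,\cdot\rangle=\Omega(J\cdot,\cdot)$ from~\eqref{eq:complex} and~\eqref{eq:symplectic} and find exactly the blocks you describe (restriction $\mathrm{diag}(0,0,-2a_{14})$ to $\ggp$, vanishing last row of $M$, upper-left block $\left(\begin{smallmatrix}-a_{15}&a_{14}\\-a_{14}&-a_{15}\end{smallmatrix}\right)$, with $a_{14}\neq 0$ forced by nondegeneracy of $\Omega$). Where you diverge is the identification step. The paper examines the curvature of the explicit $5$-parameter matrix, observes it is locally symmetric, Ricci-flat and non-flat, invokes Proposition~\ref{prop:geom}~\ref{prop:geom2} to conclude the metric must lie in the family $S_{10}=(\pm E_{10},M,\pm E_{10})$ with $M$ of type~\eqref{eq:metrike15}, and then exhibits an explicit automorphism $F$ producing $M=\left(\begin{smallmatrix}\lambda&1/2&0\\-1/2&\lambda&0\\0&0&0\end{smallmatrix}\right)$, $\lambda=-a_{15}/(2a_{14})$. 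You instead rerun the classification algorithm of Section~\ref{sec2} directly on the explicit matrix and use the $SL(2)$-conjugacy invariant of the $2\times 2$ block of $M$ --- complex conjugate eigenvalues $-a_{15}\pm ia_{14}$ since $a_{14}\neq 0$ --- to land in the second Jordan form of~\eqref{eq:metrike15}. Your route is more self-contained: it pins down \emph{which} of the three forms in~\eqref{eq:metrike15} occurs by an a priori invariant rather than by producing the automorphism, whereas the paper's curvature argument narrows the family faster but still needs the explicit $F$ to finish. The two worries you flag are both genuine but both resolve routinely: the rows of $B$ used to clear the $S$-block do not act on $M$ at all (only the last row of $B$ enters $M\mapsto A^TMA^*+B^TE_{10}A^*$, and it only touches the already-zero last column), and since $\langle e_3,e_3\rangle=\langle e_6,e_6\rangle=-2a_{14}$ have equal sign, the $\RR^{\times}$-scaling in the definition of the moduli space reduces the two possible sign patterns $(\pm E_{10},M,\pm E_{10})$ to the single representative $(E_{10},M,E_{10})$. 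Your handling of Ricci-flatness via~\cite{Fino} plus a single nonzero curvature operator for non-flatness matches the paper's logic.
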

\begin{proof}
  Let us fix the basis where complex structure $J$ is given by~\eqref{eq:complex} and symplectic form $\Omega$ by~\eqref{eq:symplectic}.
 The compatibility condition~\eqref{eq:pseudoK} for $(J, \Omega)$ gives us that the restriction of the metric on $\ggp$ must be degenerate of rank $1$. One calculates that the  metric itself is represented by a 5-parameter symmetric matrix:
 \begin{align*}
   S=\begin{pmatrix}
     -a_{12} & 0 & a_{16} & -a_{15} & a_{14} & -a_{13} \\
 0 & -a_{12} & -a_{13} & -a_{14} & -a_{15} & -a_{16} \\
 a_{16} & -a_{13} & -2 a_{14} & 0 & 0 & 0 \\
 -a_{15} & -a_{14} & 0 & 0 & 0 & 0 \\
 a_{14} & -a_{15} & 0 & 0 & 0 & 0 \\
 -a_{13} & -a_{16} & 0 & 0 & 0 & -2 a_{14}
   \end{pmatrix},\quad a_{14}\neq 0.
 \end{align*}
By examining curvature properties, we conclude that this metric is locally symmetric and Ricci-flat, but not flat. From Proposition~\ref{prop:geom}~\ref{prop:geom2}, we know that this metric must be equivalent to a metric from one of the families $S_{10}=(\pm E_{10}, M, \pm E_{10})$, where $M$ takes one of the forms~\eqref{eq:metrike15}. We can do even more, we can find a specific form of the automorphism matrix $F$ such that the matrix $F^TSF$ has one of two following forms:
  \begin{align*}
 S_{10}&=(E_{10}, M, E_{10}),\quad  M=\begin{pmatrix}\lambda & \frac{1}{2} & 0\\ -\frac{1}{2} & \lambda & 0\\0 & 0 & 0\end{pmatrix},\ \text{ where }\ \lambda=-\frac{a_{15}}{2a_{14}}.
  \end{align*}
 In this case, the complex structure is $J'=FJF^{-1}$, while the explicit formula for the corresponding symplectic forms can be retrieved from~\eqref{eq:pseudoK}.
 \end{proof}
 \begin{remark}
   In~\cite{Smolentsev}, the author considered three symplectic structures that are special cases of the symplectic structure given by~\eqref{eq:symplectic}. For each of those structures a corresponding metric was obtained. However, the author did not notice that all of these metrics are equivalent.
 \end{remark}

\begin{remark}
	The previous proposition also shows that differences between metrics in the classification (Theorem~\ref{thm:main}) are very geometrical, rather then only algebraic.
\end{remark}

\subsection{Geodesically equivalent metrics}\label{ssec:afine}

We say that a metric $\overline{\langle\cdot,\cdot\rangle}$ on a connected manifold $M^n$ is \emph{geodesically equivalent} to $\langle\cdot,\cdot\rangle$,
if every geodesic of $\langle\cdot,\cdot\rangle$ is a reparameterized geodesic of $\overline{\langle\cdot,\cdot\rangle}$. We say that they are \emph{affinely equivalent}, if their Levi-Civita connections coincide. We call a metric $\overline{\langle\cdot,\cdot\rangle}$ \emph{geodesically rigid}, if every metric $\overline{\langle\cdot,\cdot\rangle}$, geodesically equivalent to $\langle\cdot,\cdot\rangle$, is proportional to $\langle\cdot,\cdot\rangle$ (by the result of H.~Weyl  the coefficient of proportionality is a constant).
In Riemannian case if metric is not decomposable (not a product of two metrics)  it is geodesically rigid. Therefore, it makes sense to look for geodesically equivalent metrics only in pseudo-Riemannian case.

As it was proven in~\cite{Bokan2}, any two geodesically equivalent  invariant metrics on a homogenous space are affinely equivalent. This is particularly true for left invariant metrics on Lie groups. If invariant metric doesn't admit nonproportional affinely equivalent invariant metric  we call it {\it invariantly rigid.}

Non-proportional, affinely equivalent metric $\overline{\langle\cdot,\cdot\rangle}$ and $\langle\cdot,\cdot\rangle$ are both parallel with respect to the mutual  Levi-Civita connection and therefore their difference is parallel symmetric tensor. Such tensors are closely related to description of holonomy groups~\cite{Galeev}. Metrics admitting such tensors are fully described on general pseudo-Riemannian manifold in~\cite{Kruckovic} as either Riemannian extensions or using certain complex metrics.
In Proposition~\ref{prop:geod} we show that such (not invariantly rigid) left invariant metrics on $\gg$ are Riemannian extensions. Moreover all such parallel tensors on $\gg$ are ``made of'' parallel vector fields in the following way.

Suppose that $v_1, \dots , v_r$ are parallel vector fields with respect to  metric $\langle\cdot,\cdot\rangle$ and $v_1^*, \dots , v_r^*$ 1-forms metrically dual to those vectors. It is easy to chek that for any constants $C_{mn} = C_{nm}$, $n,m = 1,\dots r$, metric
\begin{align}
	\label{eq:affmetrics} \overline{\langle\cdot,\cdot\rangle} =
    \langle \cdot,\cdot \rangle + C_{nm}v_n^* \otimes v_m^*
\end{align}
is affinely equivalent to $\langle\cdot,\cdot\rangle$, or equivalently symmetric tensor $C_{nm}v_n^*\otimes v_m^*$ is parallel.

In~\cite{Kruckovic}  it was shown that such metric $\langle\cdot,\cdot\rangle$ is Riemannian extension of Euclidean space.

To classify non invariantly rigid metrics on  $\gg$    we follow the algorithm proposed in~\cite{Bokan2}. To simplify the notation, the matrix $S$ will be used to denote the metric $\langle\cdot,\cdot\rangle$.

\begin{proposition}\label{prop:geod}  If $\ggp$ is non-degenerate, the corresponding left invariant metrics are geodesically rigid.
 If $\ggp$ is degenerate, non trivial affinely equivalent metrics exist and they are  obtained exactly metrics obtained using parallel null vector fields by~\eqref{eq:affmetrics}.
\end{proposition}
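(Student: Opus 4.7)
The plan is to use the fact (recalled in the excerpt from~\cite{Bokan2}) that any two left-invariant affinely equivalent metrics $\langle\cdot,\cdot\rangle$ and $\overline{\langle\cdot,\cdot\rangle}$ on $\gg$ differ by a parallel symmetric $(0,2)$-tensor $T$. Consequently, the claim reduces to computing, for every representative metric $S$ of Theorem~\ref{thm:main}, the vector space $\mathcal{P}(S)$ of symmetric $T\in\gg^*\otimes\gg^*$ satisfying $\nabla T=0$, and then showing that $\mathcal{P}(S)$ has the form predicted. Note first that~\eqref{eq:affmetrics} always produces elements of $\mathcal{P}(S)$: if $v_n$ are parallel, then their metric duals $v_n^*$ are parallel $1$-forms, hence $v_n^*\otimes v_m^*$ is parallel, and symmetrizing in $n,m$ keeps it inside $\mathcal{P}(S)$. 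The task is the converse inclusion.

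For the non-degenerate case, I would set up $\nabla T=0$ directly using the Levi-Civita connection $\nabla$ computed from~\eqref{eq:koszul} for each canonical form in~\eqref{eq:metrike1}--\eqref{eq:metrike6}. The equation $(\nabla_x T)(y,z)=-T(\nabla_x y,z)-T(y,\nabla_x z)=0$ is a linear system in the $21$ unknown components $T_{ij}=T(e_i,e_j)$. Because Proposition~\ref{pr:paralel} asserts that $\gg$ carries no parallel vector fields when $\ggp$ is non-degenerate, the only parallel symmetric tensor can be the metric itself; thus the expected conclusion is $\mathcal{P}(S)=\RR\cdot S$, which translates, via $\overline{\langle\cdot,\cdot\rangle}=\langle\cdot,\cdot\rangle + T$, into geodesic rigidity (any affinely equivalent metric is proportional, hence equal after rescaling). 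The verification that no extra solutions exist amounts to reading off from $\nabla T=0$ that the restrictions $T\vert_{\ggp}$ and $T\vert_{\ggpo}$ are forced to be multiples of $S\vert_{\ggp}$ and $S\vert_{\ggpo}$, which in each subsignature follows from the invariance under (pseudo-)orthogonal automorphisms acting irreducibly on $\ggp$.

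For the degenerate case, the strategy is the same but the answer is larger. By Proposition~\ref{pr:paralel}, the space of parallel vector fields is $\RR\langle e_4\rangle$ (rank $2$), $\RR\langle e_4,e_5\rangle$ (rank $1$), or $\ggp=\RR\langle e_4,e_5,e_6\rangle$ (rank $0$), and in each instance these vectors are null and lie in $\ggp$. The symmetric tensors $v_n^*\otimes v_m^*+v_m^*\otimes v_n^*$ built from them form a subspace $\mathcal{V}(S)\subseteq\mathcal{P}(S)$ of dimension $1$, $3$, and $6$ respectively. For each canonical form~\eqref{eq:metrike7}--\eqref{eq:metrike13}, \eqref{eq:metrike14}--\eqref{eq:metrike15} and~\eqref{eq:metrike16} I would solve $\nabla T=0$ explicitly (using the connection formulas produced while proving Proposition~\ref{prop:geom}), verify that every component $T_{ij}$ with $i$ or $j$ outside the index set of the parallel vectors is forced to vanish, and thereby obtain $\mathcal{P}(S)=\mathcal{V}(S)$. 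Once this is established, applying~\cite{Kruckovic} to $\langle\cdot,\cdot\rangle$ together with the null, $\ggp$-valued parallel vectors exhibits the metric as a Riemannian extension of Euclidean space.

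The main obstacle is the sheer bookkeeping: the classification contains many subcases, and in the degenerate rank-$1$ and rank-$2$ cases the connection has several nontrivial components. The critical observation that keeps the case analysis tractable is that the parallel equation, read on a basis containing the parallel vectors $v_r$, splits into two blocks: on $(v_r,v_s)$ it is automatically satisfied, whereas on any pair involving at least one non-parallel basis vector it forces $T(e_i,e_j)=0$ because $\nabla_{e_k}e_i$ has components transverse to $\ker\nabla$. Making this splitting explicit for each metric is the step I expect to consume most of the work, but it yields the proposition uniformly for both parts of the statement.
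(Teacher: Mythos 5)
Your overall strategy coincides with the paper's: both reduce affine equivalence to computing, for each canonical form, the space of parallel symmetric $2$-tensors (the paper writes this as $\aff(S)=\{\bar S \mid \bar S\omega+\omega^{\t}\bar S=0\}$ with $\omega$ the connection matrix of $1$-forms, which is exactly your condition $\nabla T=0$), and both then match the result against the tensors built from parallel vector fields via~\eqref{eq:affmetrics}. However, two of your supporting claims are wrong as stated. First, in the non-degenerate case you justify $\mathcal{P}(S)=\RR\cdot S$ by ``invariance under (pseudo-)orthogonal automorphisms acting irreducibly on $\ggp$''. A parallel tensor is invariant under the holonomy group, not under $\Aut(\gg)$, so this is a non sequitur; the clean conceptual argument (which the paper records) is that these metrics have full holonomy algebra $so(p,q)$ by Proposition~\ref{prop:hol}, and a symmetric tensor fixed by the full (pseudo-)orthogonal holonomy representation must be a multiple of the metric. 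Likewise your inference ``no parallel vector fields, hence the only parallel symmetric tensor is the metric'' is not valid in general (decomposable metrics give counterexamples); only the holonomy argument or the direct computation closes this case.

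Second, in the degenerate case your ``critical observation'' misidentifies where the extra parallel tensors live. You claim that every component $T_{ij}$ with $i$ or $j$ outside the index set of the parallel vectors is forced to vanish and conclude $\mathcal{P}(S)=\mathcal{V}(S)$. But $S$ itself is always parallel, so at best $\mathcal{P}(S)=\RR S\oplus\mathcal{V}(S)$; this is exactly the paper's~\eqref{eq:para}, namely $\lambda S$ plus the $c^{ij}$-block. More importantly, since the parallel vectors are null and lie in $\ggp$, their metric duals are supported on the complementary directions: for $S_{10}=(E_{10},M,E_{10})$ with $M$ as in~\eqref{eq:metrike14} one has $v_1=e_4$ and $v_2=\tfrac{1}{m_{12}}e_5$ but $v_1^*=e^2+e^3$ and $v_2^*=e^1$, so the tensors $v_n^*\otimes v_m^*$ occupy the upper-left $3\times 3$ block --- precisely the positions your splitting predicts must vanish. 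The computation $\nabla T=0$ would still give the correct answer if carried out, but the mechanism you describe for organizing the case analysis points in the wrong direction and would have to be corrected before the verification can proceed.
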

\begin{proof}
It is clear that if original metric $\langle \cdot , \cdot \rangle $ has parallel vector fields that the metric~\eqref{eq:affmetrics} is affinely equivalent to it.
	
To prove the converse we do case by case analysis for each metric from our classification.

 Let $S$ be a symmetric matrix representing  a left invariant metric  $\langle \cdot , \cdot \rangle $ in basis $\{e_1,\dots e_6\}$  and $\omega$ its Levi-Civita connection matrix of  1-forms. As proven in~\cite[Proposition 3.1]{Bokan2}, left invariant metric $\bar{S}$ is  geodesically equivalent to $S$ if and only if its matrix $\bar{S}$ in basis $\{e_1,\dots e_6\}$  belongs to the subspace
  \begin{align}
  	\aff (S):= \{ \bar{S}\enskip \vert \enskip \bar{S}\omega+\omega^\t \bar{S} = 0 \}.
  \end{align}
 Since $\omega$ is a matrix of 1-forms, therefore the given relations are six matrix equations.

 If $S$ is such that  $\ggp$ is non-degenerate, we directly check that  $\aff (S)$ is one-dimensional, hence $S$ is geodesically rigid.
This also follows (without calculation) from the fact that such metrics have full holonomy
algebra (Proposition~\ref{prop:hol}). Namely, if metric is not geometrically rigid it can't have full holonomy (see~\cite{Bokan2}).

We illustrate the proof for metric $S = S_{10}=\left( E_{10},M,  E_{10}\right)$ with $\ggp$ of rank $1,$ where is $M$ given by~\eqref{eq:metrike14}. The connectiom matrix $\omega$ can be calculated form relations~\eqref{eq:nabla} and we get that $\aff (S)$ is space of matrices
\begin{align}
	\label{eq:para}
\lambda S + 	\left(
	\begin{array}{cccccc}
		c^{11} & c^{12} & c^{12} & 0 & 0 & 0 \\
		c^{12} & c^{22} & c^{22} & 0 & 0 & 0 \\
		c^{12} & c^{22} & c^{22} & 0 & 0 & 0 \\
		0 & 0 & 0 & 0 & 0 & 0 \\
		0 & 0 & 0 & 0 & 0 & 0 \\
		0 & 0 & 0 & 0 & 0 & 0
	\end{array}
	\right), \enskip \lambda, c^{11}, c^{12}, c^{22} \in \RR.
\end{align}
In fact, $\aff(S)$ is set of all parallel symmetric (left invariant) tensors for metric $S$ and we see that it is 4-dimensional.
Now, we will prove that it is made of parallel vectors using formula~\eqref{eq:affmetrics}. Parallel vectors for  metric $S$ are $v_1 = e_4$ and $v_2 = \frac{1}{m_{12}}e_5$ (Proposition~\ref{pr:paralel}).
Their metrically dual forms are
\begin{align*}
	v_1^* = e^2 + e^3, \quad  v_2^* = e^1,
\end{align*}
 where $(e^1, \dots ,e^6)$ is basis of one forms dual to vectors  $(e_1, \dots ,e_6)$ in sense that $e^i(e_j) = \delta _j^i.$ Now we see that
 \begin{align}
 	c^{11} (v_1^*\otimes v_1^*) + c^{12}(v_1^*\otimes v_2^* + v_2^*\otimes v_1^*) + c^{22}(v_2^*\otimes v_2^*)
 \end{align}
are exactly parallel symmetric tensors in~\eqref{eq:para} not proportional to $S.$ They are obtained from parallel vector fields using~\eqref{eq:affmetrics}.
\end{proof}

\begin{remark}
  One can check that non-proportional, affinely equivalent metrics are related by an automorphism of the group. This means that the corresponding Lie groups equipped with these metrics posses a family of automorphisms that are not isometries, but preserve geodesics.
\end{remark}

\begin{remark}
  Note that if the metric is Ricci-parallel, i.e. $\nabla \rho =0$, then $\rho \in\aff(S)$. Obviously, the converse is not true: not all non invariantly rigid metrics are Ricci-parallel.
\end{remark}

\subsection{Totally geodesic subalgebras of $\gg$ }\label{ssec:geodesicsub}

A subalgebra $\hh$ of a metric algebra $(\g,\langle\cdot,\cdot\rangle)$ is said to be \emph{totally geodesic} if $\nabla_y z\in\hh$ for all $y, z\in\hh$. If $\hh^\perp$ denotes the orthogonal complement of $\hh$ in $\gg$, then, as a direct consequence of Koszul's formula, we get that $\hh$ is totally geodesic subalgebra of $\gg$ if and only if
\begin{align}\label{eq:tot}
	\langle[x,y], z\rangle + \langle[x,z], y\rangle=0,\quad\text{for all } x\in\hh^\perp, y,z\in\hh.
\end{align}
It is said that $\hh^\perp$ is \emph{$\hh$-invariant} if $[x,y]\in\hh^\perp$, for all $x\in\hh^\perp$, $y\in\hh$.
A nonzero element $y\in\gg$ is called \emph{geodesic} if it spans a totally geodesic subalgebra $\hh$ and it can be characterized by the condition that $\hh^\perp$ is $\hh$-invariant. For nilpotent Lie groups, there exists an inner product for which a nonzero element $y$ is geodesic (see e.g.~\cite{Nikolayevsky}).

The algebra $\gg$ is 2-step nilpotent with the derived algebra coinciding with the algebra center and  it is nonsingular in a sense of Eberlein~\cite[Definition 1.4]{Eberlein}, meaning that for each non-central element $x\in\gg$ the adjoint map $\ad(x)$ is surjective onto $\mathcal{Z}(\gg)$.
The nonsingularity condition is equivalent to the following statement: for each inner product on $\gg$, the only geodesics are the vectors contained in the centre $\mathcal{Z}(\gg)$ of $\gg$ or orthogonal to it (see~\cite[Proposition 1.11]{Nikolayevsky}). Also, every vector subspace of $\mathcal{Z}(\gg)$ and every subalgebra that is orthogonal to $\ggp$ are totally geodesic subalgebras of $\gg$ (see~\cite[Proposition 1.5]{Nikolayevsky}).

A classification of totally geodesic subalgebras of Lie algebras of Heisenberg type was given by Eberlein~\cite{Eberlein}, while in~\cite{Nikolayevsky}, the authors considered an example of 6-dimensional nilpotent Lie algebra with 2-dimensional center.

\begin{proposition}\label{prop:totgeod}
	For every subalgebra $\hh$ of $\gg$ there exists a metric that makes it totally geodesic.
\end{proposition}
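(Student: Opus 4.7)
The plan is to construct, for each subalgebra $\hh$, an explicit inner product realizing~\eqref{eq:tot}, exploiting the fact that $\gg$ is 2-step nilpotent with $[\gg,\gg]=\ggp=\mathcal{Z}(\gg)$. Because every bracket lies in $\ggp$, the left-hand side of~\eqref{eq:tot} only involves central elements, which will make the condition fairly manageable. From the discussion preceding the statement, two classes are immediate: if $\hh\subseteq\ggp$ any metric works (every subspace of $\mathcal{Z}(\gg)$ is totally geodesic), and if $\hh$ is orthogonal to $\ggp$ it is again totally geodesic. So the task is to handle the remaining subalgebras.

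I would begin by parametrizing subalgebras of $\gg$ as $\hh=U\oplus W$, where $W=\hh\cap\ggp\subseteq\ggp$, and $U$ is any linear complement of $W$ inside $\hh$ projecting isomorphically onto $V:=(\hh+\ggp)/\ggp\subseteq\gg/\ggp$. Since $\gg$ is 2-step nilpotent, the only closure requirement is $[U,U]\subseteq W$, which via the isomorphism $\Lambda^2(\gg/\ggp)\cong\ggp$ from~\eqref{eq:komutatorieps} reduces to a purely linear condition $[V,V]\subseteq W$. This yields a short finite list of cases indexed by $\dim V\in\{0,1,2,3\}$ and the admissible $W\subseteq\ggp$, with the easy extremes $V=0$ (handled above) and $V=\gg/\ggp$ (forcing $\hh=\gg$).

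For the remaining cases I plan to use the following construction. Pick a complementary subspace $\mathfrak{m}\subseteq\gg$ of $\hh$ of the form $\mathfrak{m}=U'\oplus C$, where $U'$ lifts a complement $V'$ of $V$ in $\gg/\ggp$ and $C$ is a complement of $W$ in $\ggp$; take a metric that makes $\hh\perp\mathfrak{m}$, with any non-degenerate block on each summand. Since $[x,y]\in\ggp=W\oplus C$ for $x\in\mathfrak{m}$, $y\in\hh$, and the $C$-component is orthogonal to $\hh$ by construction, condition~\eqref{eq:tot} reduces to a statement about the $W$-component of $[\mathfrak{m},\hh]$. If $V'$ and $C$ can be chosen so that $[V',V]\subseteq C$ (equivalently $[V',V]\cap W=0$), then the $W$-component vanishes identically and~\eqref{eq:tot} holds trivially. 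A dimension count using the iso $\Lambda^2(\gg/\ggp)\cong\ggp$ shows this is possible precisely when the expected inequality $\dim[V',V]+\dim W\leq 3$ admits some complement $V'$; this covers most of the list.

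The hard part will be the residual cases where no choice of complement $V'$ achieves $[V',V]\cap W=0$ (for instance $\hh=\RR e_1+\RR e_5$, where every 2-plane $V'\subseteq\gg/\ggp$ complementary to $\RR\bar e_1$ satisfies $[V',\bar e_1]=\RR\langle e_5,e_6\rangle\supseteq W$). Here I would replace the block-diagonal ansatz with a metric whose restriction to $\hh$ is suitably degenerate, arranging the offending bracket vectors to lie in the radical of $\langle\cdot,\cdot\rangle|_\hh$. Concretely, one picks the metric so that the relevant central directions are null and orthogonal to the whole of $\hh$, thereby killing the inner products in~\eqref{eq:tot}; the construction from Subsection~\ref{sssec:20} and its $S_{11}$ analogues already shows that such degenerate-on-center metrics are available and extend to non-degenerate metrics on $\gg$. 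Verifying global non-degeneracy while enforcing these nullity conditions is the main technical step, but since $\gg$ is 6-dimensional and $\dim\hh\leq 6$, there is enough room to complete the construction in every remaining case, finishing the proof.
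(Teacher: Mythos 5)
Your reduction of \eqref{eq:tot}, for a block-orthogonal metric adapted to $\hh=U\oplus W$, to the linear condition $[V',V]\cap W=0$ is correct, and you correctly identify that this sufficient condition is unattainable for certain subalgebras (for instance $\RR\langle e_1,e_5\rangle$, and more generally any $\hh\not\subseteq\ggp$ with $\dim(\hh\cap\ggp)\geq 2$, which includes the four- and five-dimensional subalgebras $\h3\oplus\RR^k$ that the paper must treat). The problem is that your argument stops exactly there: for these residual cases you propose to make the offending central directions null and orthogonal to all of $\hh$, but the existence of a globally non-degenerate metric satisfying those nullity constraints is only asserted (``there is enough room''), not constructed or even counted. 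Since these are precisely the cases not covered by the easy part of your argument, this is a genuine gap rather than a routine verification left to the reader.

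The gap is, however, easy to close, and in fact your degenerate-on-the-center ansatz, pushed to its extreme $W=\ggp$, does it in one line: the ad-invariant metric $S_{00}=(0,I,0)$ of Example~\ref{ex:adin} satisfies $\langle[x,y],z\rangle=-\langle y,[x,z]\rangle$ for all $x,y,z\in\gg$, so the left-hand side of \eqref{eq:tot} vanishes identically and \emph{every} subalgebra of $\gg$ is totally geodesic for this single metric. This is quite different from the paper's route, which enumerates the subalgebras of $\gg$ (the 2- and 3-dimensional abelian ones, the three copies of $\h3$, the algebras $\h3\oplus\RR$, and the 5-dimensional ones) and exhibits for each a concrete metric from the classification of Theorem~\ref{thm:main}, mostly with non-degenerate restriction to $\ggp$; that case-by-case route yields extra information about which of the classified metrics make a given subalgebra totally geodesic, whereas the ad-invariant metric settles the bare existence statement immediately. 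I would recommend either completing your hard cases via this observation or replacing the final step by an explicit construction for each residual $\hh$.
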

\begin{proof}
	As previously stated, geodesics are the vectors contained in the centre of $\gg$ or orthogonal to it. Hence, for every metric algebra corresponding to the non-degenerate center, all basis vectors are geodesic. Let us examine more closely the metric $S_{21}=(S,0,\pm E_{21})$ where $S$ takes the following form:
	\begin{align*}
		S =
		\begin{pmatrix}
			s_{11} & s_{13} & s_{13}\\
			s_{13}          & s_{11} - 2 s_{13} & - 2s_{13}\\
			s_{13}          & -2 s_{13} & -s_{11} - 2 s_{13}
		\end{pmatrix},\quad s_{11}\neq\pm2 s_{13},\ s_{11}, s_{13}\neq 0.
	\end{align*}
	The other cases can be analyzed analogously.
	
	Let $\hh$ be $n$-dimensional ($n<5$) subalgebra of metric algebra $(\gg, S_{21})$. Then $\hh$ is one of the following algebras:
	\begin{enumerate}[wide, topsep=0pt, itemsep=1pt, labelwidth=!, labelindent=20pt, label=(\roman*)]
		\item 2-dimensional abelian algebra $\RR^2\cong\RR\langle x, y\rangle$, $x\in\gg$, $y\in\ggp$;
		\item 3-dimensional abelian algebra $\RR^3\cong\RR\langle x, y, z\rangle$, $x\in\gg$, $y, z\in\ggp$;
		\item 3-dimensional Heisenberg algebra $\h3$;
		\item 4-dimensional 2-step nilpotent algebra $\h3\oplus\RR$.
	\end{enumerate}
	
	First, let us consider the abelian case. Since every subalgebra of $\ggp$ is totally geodesic by~\cite[Proposition 1.5]{Nikolayevsky}, we can consider only the case when $x\in\ggp^\perp$. Hence, if $\hh=\RR^k$, $k=2,3$, then $\hh^\perp$ is spanned by the vectors from $\ggp$ that are not already in $\hh$. Hence~\eqref{eq:tot} is trivially satisfied.
	
	The nilpotent case is very similar. There are precisely three Heisenberg subalgebras of $\gg$: $\hh^1=\RR\langle e_2, e_3, e_4\rangle$, $\hh^2=\RR\langle e_1, e_3, e_5\rangle$ and $\hh^3=\RR\langle e_1, e_2, e_6\rangle$. The 4-dimensional algebras have the form $\hh^k\oplus e_j$, $k=1,2,3$, $j=4,5,6$, $j\neq k+3$. In all these cases the orthogonal complement is contained in the algebra center and~\eqref{eq:tot} is again satisfied. Interestingly, the subalgebras $\hh^1$ and $\hh^3$ are totally geodesic subalgebras for every metric corresponding to the non-degenerate center.
	
	Now, the only thing left is to find an example in dimension five. It is a straightforward check that the metrics $S_{10}=(\pm E_{10}, M, \pm E_{10})$, with $M=diag(\lambda, \lambda, 0)$, admit all three 5-dimensional totally geodesic subalgebras isomorphic to $\h3\oplus\RR^2$.
\end{proof}

A subspace $\hh\subseteq(\g, \langle\cdot,\cdot\rangle)$ is called \emph{isotropic} if $\langle x, y\rangle=0$, for all $x, y\in\hh$, i.e. $\hh\subset\hh^\perp$. Additionally, $\hh$ is called \emph{totally isotropic} if $\hh=\hh^\perp$.
\begin{example}
	It was mentioned in~\cite[Example 5.2]{Ovando2012} that on $\gg$ equipped with the canonical metric from Example~\ref{ex:adin} both spaces $\ggp$ and $(\ggp)^*$ are totally isotropic. Here, we can see that both of these spaces are totally isotropic if the metric corresponds to the degenerate center $\ggp$ of rank 0. For the same four families of metrics, the totally geodesic subalgebra $\h3\cong\RR\langle e_2, e_3, e_4\rangle$ is also totally isotropic.
\end{example}


\begin{thebibliography}{12}
\bibitem{Aleksijevski}{D.~V. Alekseevski\u{\i}, \emph{Homogeneous Riemannian spaces of negative curvature}, Math. USSR, Sb. 1975; 25(1): 87--109.}
\bibitem{Alekseevsky}{D. Alekseevsky, J. Grabowski, G. Marmo, P.~W. Michor, \emph{Poisson structures on the cotangent bundle of a Lie group or a principle bundle and their reduction}, J. Math. Phys. 1994; 35: 4909--4928.}
\bibitem{Benson}{C. Benson, C.~S. Gordon, \emph{K\"{a}hler and symplectic structures on nilmanifolds}, Topology 1988; 27: 513--518.}
\bibitem{Bergery} {L. B\'{e}rard-B\'{e}rgery, \emph{Homogeneous Riemannian spaces of dimension four}, Seminar A. Besse, Four-dimensional Riemannian geometry, 1985.}
\bibitem{Bokan1}{N. Bokan, T. \v{S}ukilovi\'{c}, S. Vukmirovi\'{c}, \emph{Lorentz geometry of 4-dimensional nilpotent Lie groups}, Geom. Dedicata. 2015; 177(1): 83--102.}
\bibitem{Bokan2}{N. Bokan, T. \v{S}ukilovi\'{c}, S. Vukmirovi\'{c}, \emph{Geodesically equivalent metrics on homogenous spaces}, Czech. Math. J. 2019; 69(4): 945--954.}
\bibitem{Nikolayevsky}{G. Cairns, A.~H. Galic, Y. Nikolayevsky, \emph{Totally geodesic subalgebras of nilpotent Lie algebras}, arXiv preprint arXiv:1112.1288, 2011.}
\bibitem{Calvaruso}{G. Calvaruso, A. Zaeim, \emph{Four-dimensional Lorentzian Lie groups}, Differ. Geom. Appl. 2013; 31(4): 496--509.}
\bibitem{Ovando2012}{R. Campoamor-Stursberg, G.P. Ovando, \emph{Invariant complex structures on tangent and cotangent Lie groups of dimension six}, Osaka J. Math. 2012; 49(2): 489--513.}
\bibitem{Conti}{D. Conti, V. del Barco, F.~A. Rossi, \emph{Uniqueness of ad-invariant metrics}, arXiv preprint arXiv:2103.16477, 2021.}
\bibitem{Coxeter}{H.~S.~M. Coxeter, \emph{Non-Euclidean Geometry}, University of Toronto Press, 1965.}
\bibitem{Cordero}{L.~A. Cordero, M. Fernández, A. Gray, L. Ugarte, \emph{Nilpotent complex structures on compact nilmanifolds}, Rend. Circ. Mat. Palermo (2) Suppl. 1997; 49: 83--100.}
\bibitem{CorderoParker}{L.~A. Cordero, P.~E. Parker, \emph{Left-invariant Lorentz metrics on 3-dimensional Lie groups}, Rend. Mat. Appl. 1997; 17: 129--155.}
\bibitem{Diatta}{A. Diatta, A. Medina, \emph{Classical Yang-Baxter Equation and Left Invariant Affine Geometry on a Lie Groups}, Manuscripta Math. 2004; 114(4): 477--486.}
\bibitem{Drinfels}{V.~G. Drinfels, \emph{Hamiltonian structures on a Lie groups, Lie bialgebras and the geometric meaning of classical Yang-Baxter equations}, Dokl. Akad. Nauk SSSR 1983; 268(2): 285--287.}
\bibitem{Eberlein}{P. Eberlein, \emph{Geometry of 2-step nilpotent groups with a left invariant metric}, Ann. de l' \'{E}c. Norm. 1994; 27(5): 611--660.}
\bibitem{Feix}{B. Feix,  \emph{Hyperk\"ahler metrics on cotangent bundles}, J. Reine Angew. Math. 2001;  532: 33--46.}
\bibitem{Fino}{A. Fino, M. Parton, S. Salamon, \emph{Families of strong KT structures in six dimensions}, Comment. Math. Helv. 2004; 79: 317--340}
\bibitem{Fladt}{K. Fladt, \emph{Die allgemeine Kegelschnittgleichung in der ebenen hyperbolischen Geometrie}, J. Reine Angew. Math. 1957; 197: 121--139.}
\bibitem{Gordon} {C.S. Gordon, E.N. Wilson, \emph{Isometry groups of Riemannian solvmanifolds}, Trans. Amer. Math. Soc. 1988; 307(1): 245--269.}
\bibitem{Galeev}{A.~S. Galeev, \emph{How to find the holonomy algebra of a Lorentzian manifold}, arXiv:1110.1998}
\bibitem{Tamaru1} {T. Hashinaga, H. Tamaru, K. Terada, \emph{Milnor-type theorems for left-invariant Riemannian metrics on Lie groups}, J. Math. Soc. Japan, 2016; 68(2): 669--684}
\bibitem{Homolya}{Sz. Homolya, O. Kowalski, \emph{Simply connected two-step homogeneous nilmanifolds of dimension 5}, Note Mat. 2006; 26(1): 69--77.}
\bibitem{Jensen}{G.~R. Jensen, \emph{Homogeneous Einstein spaces of dimension four}, J. Differ. Geom. 1969; 3(3-4): 309--49.}
\bibitem{Kruckovic}{G.~I. Kruchkovich, A.~S. Solodovnikov, \emph{Constant symmetric tensors in Riemannian space}, Izv. Vyssh. Uchebn. Zaved. Mat., 1959; 3: 147–-158}
\bibitem{Karki}{M.~B Karki, G. Thompson, \emph{Four-dimensional Einstein Lie groups}. Differ. Geom. Dyn. Syst. 2016; 18: 43--57.}
\bibitem{Klein}  {F. Klein, W. Rosemann, \emph{Vorlesungen \"{u}ber nicht-euklidische Geometrie}, Springer, 1928. }
\bibitem{Tamaru2}{H. Kodama, A. Takahara, H. Tamaru, \emph{The space of left-invariant metrics on a Lie group up to isometry and scaling}, Manuscr. Math., 2011; 135:  229--243.}
\bibitem{Lauret1} {J. Lauret, \emph{Homogeneous nilmanifolds of dimension $3$ and $4$}, Geom. Dedicata. 1997; 68: 145–-155.}
\bibitem{Lauret2} {J. Lauret, \emph{Ricci soliton homogeneous nilmanifolds}, Math. Ann. 2001; 319: 715-–733.}
\bibitem{Libman}{H. Liebmann, \emph{NichtEuklidische Geometrie}, G.J. G\"{o}schen, 1905.}
\bibitem{Magnin}{L. Magnin, \emph{Complex structures on indecomposable 6-dimensional nilpotent real Lie algebras},
Internat. J. Algebra Comput. 2007; 17: 77--113.}
\bibitem{Milnor} {J. Milnor, \emph{Curvatures of left invariant metrics on Lie groups}, Adv. Math. 1976; 21(3): 293--329.}
\bibitem{Morozov}{V.~V. Morozov, \emph{Classification of nilpotent Lie algebras of sixth order}, Izv. Vyssh. Uchebn. Zaved. Mat. 1958; 4: 161--171.}
\bibitem{Mubarakzyanov}{G.~M. Mubarakzjanov, \emph{Classification of solvable Lie algebras of sixth order with a non-nilpotent basis element},  Izv. Vyssh. Uchebn. Zaved. Mat., 1963; 4: 104-–116.}
\bibitem{Vitone}{S. Reggiani, F. Vittone, \emph{The moduli space of left-invariant metrics of a class of six-dimensional nilpotent Lie groups}, arXiv preprint arXiv:2011.02854}
\bibitem{Rosenfeld}{B. Rosenfeld, B. Wiebe, \emph{Geometry of Lie groups}, Springer Science \& Business Media, 1997.}
\bibitem{Salamon}{S.~M. Salamon, \emph{Complex structures on nilpotent Lie algebras}, J. Pure Appl. Algebra 2001; 157: 311--333.}
\bibitem{Smolentsev}{N.~K. Smolentsev, \emph{Canonical pseudo-K\"{a}hler structures on six-dimensional nilpotent Lie groups}, arXiv preprint arXiv:1310.5395, 2013.}
\bibitem{Story}{W.E. Story, \emph{On non-Euclidean properties of conics}, Am. J. Math. 1882; 5(1): 358--381}
\bibitem{Sukilovic1}{T. \v{S}ukilovi\'{c}, \emph{Geometric properties of neutral signature metrics on 4-dimensional nilpotent Lie groups}, Rev. Uni\'{o}n Mat. Argent. 2016; 57(1): 23--47.}
\bibitem{Sukilovic2}{T. \v{S}ukilovi\'{c}, \emph{Classification of left invariant metrics on 4-dimensional solvable Lie groups}, Theor. Appl. Mech. 2020; 47(2): 181--204.}
\bibitem{Vukmirovic}{S.  Vukmirovi\'{c}, \emph{Classification of left-invariant metrics on the Heisenberg group},J. Geom. Phys. 2015; 94: 72--80.}
\bibitem{Umlauf}{K.~A. Umlauf, \emph{\"{U}ber die Zusammensetzung der endlichen continuierlichen Transformationsgruppen, insbesondere der Gruppen von Range Null}, Leipzig, 1891; 80.}
\end{thebibliography}
\end{document}